\def\a{\alpha}
\def\b{\beta}
\def\c{\cdot}
\def\D{\Delta}
\def\g{\gamma}
\def\o{\otimes}
\def\r{\rho}
\def\s{\sigma}
\def\v{\varepsilon}
\newtheorem{theorem}{Theorem}[section]
\newtheorem{prop}[theorem]{Proposition}
\theoremstyle{definition}
\newtheorem{defn}[theorem]{Definition}
\newtheorem{lemma}[theorem]{Lemma}
\newtheorem{coro}[theorem]{Corollary}
\newtheorem{prop-def}{Proposition-Definition}[section]
\newtheorem{coro-def}{Corollary-Definition}[section]
\newtheorem{remark}[theorem]{Remark}
\newtheorem{exam}[theorem]{Example}
\newcommand{\nc}{\newcommand}
\nc{\tred}[1]{\textcolor{red}{#1}}
\nc{\tblue}[1]{\textcolor{blue}{#1}}
\nc{\tgreen}[1]{\textcolor{green}{#1}}
\nc{\tpurple}[1]{\textcolor{purple}{#1}}
\nc{\btred}[1]{\textcolor{red}{\bf #1}}
\nc{\btblue}[1]{\textcolor{blue}{\bf #1}}
\nc{\btgreen}[1]{\textcolor{green}{\bf #1}}
\nc{\btpurple}[1]{\textcolor{purple}{\bf #1}}
\nc{\NN}{{\mathbb N}}
\nc{\ncsha}{{\mbox{\cyr X}^{\mathrm NC}}} \nc{\ncshao}{{\mbox{\cyr
X}^{\mathrm NC}_0}}
\nc{\bfk}{\mathbf{k}}
\newcommand{\efootnote}[1]{}
\newcommand{\delete}[1]{}
\nc{\mlabel}[1]{\label{#1}}  % Use this to suppress names
\nc{\mcite}[1]{\cite{#1}}  % Use this to suppress names
\nc{\mref}[1]{\ref{#1}}  % Use this to suppress names
\nc{\mbibitem}[1]{\bibitem{#1}} % Use this to show number name
\nc{\mlabel}[1]{\label{#1}  % Use the next two lines to show names
{\hfill \hspace{1cm}{\small\tt{{\ }\hfill(#1)}}}}
\nc{\mcite}[1]{\cite{#1}{\small{\tt{{\ }(#1)}}}}  % Use this lines to show names
\nc{\mref}[1]{\ref{#1}{{\tt{{\ }(#1)}}}}  % Use this lines to show names
\nc{\mbibitem}[1]{\bibitem[\bf #1]{#1}} % Use this to show name
\nc{\li}[1]{\tblue{#1}}
\nc{\lir}[1]{\tblue{\underline{Li:} #1}}
\nc{\mclip}[2]{{\centering
		\includegraphics[scale=#1]{#2}
}}
\begin{document}

\title{Hopf Heap modules, Rota-Baxter Operators, and their structure theorems
}
%
%=========================================================================
\author{Huihui Zheng}
\address{School of Mathematics and Statistics, Henan Normal University, Xinxiang 453007, China}
\email{huihuizhengmail@126.com}

\author{Chan Zhao}
\address{School of Mathematics and Statistics, Henan Normal University, Xinxiang 453007, China}
\email{3227956863@qq.com}

\author{Liangyun Zhang$^\ast$}
\address{Nanjing Agricultural University, Nanjing 210095, China}
\email{zlyun@njau.edu.cn}
%========================================================================
\date{\today}
%========================================================================
\begin{abstract}
This paper is primarily devoted to the study of Hopf heaps and Hopf heap modules. We redefine the structure of Hopf trusses by means of Hopf heaps, establish the connection between Hopf trusses and Hopf braces, and provide a series of examples of Hopf truss structures from the perspective of Hopf heaps. Most importantly, we introduce the conception of Hopf heap modules, and present its structure theorem. Finally, we introduce the notions of Rota-Baxter operators on Hopf heaps and Hopf heap modules, and present the structure theorem for Rota-Baxter Hopf heap modules.

%In this paper, we are dedicated to the research of Hopf heap. We discussed the structure of Hopf heap and provided some of its properties. Through the structure of Hopf heap, we redefined Hopf truss. With this new definition, we established the connection between Hopf truss and Hopf brace, and provided numerous examples of Hopf truss from the perspective of Hopf heap. Most importantly, we defined the Hopf heap module and gave its stucture theorem. Finally, the definition of Rota-Baxter operators on Hopf heaps and Hopf heap modules were proposed and linked with the Rota-Baxter co-operators on commutative Hopf algebras and Hopf modules, respectively. Additionally, a new Hopf heap and Hopf heap module structure were constructed through the Rota-Baxter operators on Hopf heaps and Hopf heap modules, respectively.

\end{abstract}

\subjclass[2020]{16T05,16W99}
\keywords{Hopf algebra, Hopf heap, Hopf truss, Hopf heap module, Hopf module, Rota-Baxter operator.}

\maketitle

\tableofcontents

\setcounter{section}{0}

\allowdisplaybreaks

%========================================================================
\section{Introduction}

In the 1920s, Pr$\ddot{u}$fer \mcite{P} and Baer \mcite{Baer} introduced a notion called a heap. A heap is a set $H$ with a ternary operation
$[-,-,-]:H\times H\times H\rightarrow H$ satisfying the following axioms: for all $a,b,c,d,e\in H$,
$$
[[a,b,c],d,e]=[a,b,[c,d,e]],
$$
$$
[a,b,b]=a=[b,b,a].
$$

 There is a close relationship between heaps and groups. Any non-empty heap $H$ and $e\in H$, one can associate a group $G(H;e)=(H,[-,e,-])$, where $[-,e,-]$ is a binary operation acquired by fixing the middle variable in the ternary operation. Conversely, every group $(G,\c,e)$ can give a heap $H(G)=(G,[-,-,-])$, where $[x,y,z]=xy^{-1}z$, for all $x,y,z\in G$ (see \mcite{B1}).

 A skew left truss \mcite{B} is a set $A$ with binary operations $``\diamond"$ and $``\circ"$ and a function $\s:A\rightarrow A$ such that $(A,\diamond)$ is a group,  $(A,\circ)$ a semigroup, and the following axiom holds:
 $$
 a\circ (b\diamond c)=(a\circ b)\diamond\s(a)^{\diamond}\diamond (a\circ c),
 $$
 for all $a,b,c\in A$, where $\s(a)^{\diamond}$ denotes the inverse of $\s(a)$ in the group $(A,\diamond)$.

 This truss distributive law describes two different distributive laws: the well-known ring distribuvity and the one coming from the resently introduced braces. The former is obtained by setting $\s(a)=1_{\diamond}$, the latter is obtained by setting $\s$ to be the identity map. According to the connection between heaps and groups described above, the two structure rings and braces can be described elegantly by switching the group structure to a heap structure. This leads to the equivalent definition of a truss. A (two-sided) truss \mcite{B} is an abelian heap $T$ together with an associative binary operation $``\c"$ satisfying the conditions of ring and brace distributivity:
 \begin{center}
 $a\c [b,c,d]=[a\c b, a\c c, a\c d]$;
 \end{center}
  \begin{center} $[b,c,d]\c a=[b\c a,c\c a,d\c a], $
 \end{center}
for all $a,b,c,d\in A$. The notion of a truss can be weakened by not requesting that $(T,[-,-,-])$ is an abelian heap, in which case the system $(T, [-,-,-])$ is called a skew truss or near truss or not requesting the two-sided distributivity of $``\c"$ over $[-,-,-]$, in which case $(T, [-,-,-])$ is called a left or right skew truss.

The truss is concerned with the linearisation of heaps proposed by Grunspan in \mcite{Grunspan}, termed quantum cotorsors there and referred to as Hopf heaps in \mcite{BH}. Moreover, the authors in \mcite{BH} established an equivalence between the category of Hopf heaps and that of bi-Galois co-objects.

Rota-Baxter operators were initially introduced in Baxter's seminal work \mcite{Baxter} in 1960, in the framework of fluctuation theory. The theory of Rota-Baxter operators has been extensively developed across a wide range of mathematical disciplines. These operators are of fundamental importance, as they are closely linked to several central mathematical concepts, including the Yang-Baxter equation \mcite{BD,Schedler,Semenov}, dendriform algebras \mcite{Bai}, and the renormalization of quantum field theory \mcite{CK}. Rota-Baxter operators on groups were first introduced in \mcite{GLS}. The connections between Rota-Baxter groups, the Yang-Baxter equation, and skew braces were established in \mcite{BG}. As a group can be viewed as a fundamental example of a cocommutative Hopf algebra, the concept of Rota-Baxter operators on cocommutative Hopf algebras was naturally first proposed in \mcite{Goncharov}, and the relationship between such operators and Hopf braces was investigated in \mcite{ZZMG}.

The main purpose of this work is to further explore and investigate the structure of Hopf heaps, as well as their Rota-Baxter operators. As shown in \mcite{B}, every Hopf truss is a generalization of a Hopf brace. Naturally, leveraging the relationship between Hopf heaps and Hopf algebras (see Example 2.3 in \mcite{BH}), we can consider and study Hopf trusses from the perspective of Hopf heaps. Given that every Hopf algebra is a Hopf heap (a key aspect of their relationship), we introduce the concept of Rota-Baxter Hopf heap, establish the connection between Rota-Baxter Hopf heap and Rota-Baxter Hopf algebra. Furthermore, we define (Rota-Baxter) Hopf heap modules, establish the relationship between (Rota-Baxter) Hopf heap modules and (Rota-Baxter) Hopf modules, and present the structure theorem for (Rota-Baxter) Hopf heap modules.

We can summarize these relations in the following diagram--the black arrows represent the background of our research, while the red arrows denote the
main research content.

\vspace{4mm}
%TeXCAD Picture [333333333333333.pic]. Options:
%\grade{\on}
%\emlines{\off}
%\epic{\off}
%\beziermacro{\on}
%\reduce{\on}
%\snapping{\off}
%\quality{8.00}
%\graddiff{0.01}
%\snapasp{1}
%\zoom{4.0000}
\unitlength 1mm % = 2.85pt
\linethickness{0.4pt}
\ifx\plotpoint\undefined\newsavebox{\plotpoint}\fi % GNUPLOT compatibility
\begin{picture}(100,63)(0,0)
\put(17,28.25){heap}
\put(15.75,60){group}
\put(49,60){Hopf algebra}
\put(50,50.5){Hopf brace}
\put(51,38.5){Hopf truss}
\put(50.75,28.25){Hopf heap}
\put(92,60){Rota-Baxter Hopf algebra}
\put(92,28.25){\textcolor{red}{Rota-Baxter  Hopf heap}}
\put(20,58.25){\vector(0,-1){25.5}}
\put(21.75,33.25){\vector(0,1){25.5}}

\put(26.25,61.25){\vector(1,0){22.25}}
\put(26,29.5){\vector(1,0){23.75}}

\put(59.25,59.25){\vector(0,-1){6}}
\put(59,50.5){\textcolor{red}{\vector(0,-1){9}}}
\put(60,41.5){\textcolor{red}{\vector(0,1){9}}}
\put(59.25,30.75){\textcolor{red}{\vector(0,1){7}}}

\put(72.5,61){\vector(1,0){18.3}}
\put(68.8,29.25){\textcolor{red}{\vector(1,0){22.5}}}

\put(105.25,58.5){\textcolor{red}{\vector(0,-1){27}}}
\put(106.67,31.5){\textcolor{red}{\vector(0,1){28}}}
%\qbezvec(51.5,5.75)(28.63,27.63)(51.25,44)
\put(51.25,58){\vector(4,3){.07}}\qbezier(50,57)(26.63,42.63)(50.25,30)
%\end
%\qbezvec(63.5,5.25)(89.38,24.75)(67.75,43.25)
\put(63,30.4){\vector(-4,-3){.07}}\qbezier(64,31)(86.38,42.75)(65.75,58.25)

%\put(49,62){Hopf module}
%\put(90,62){\textcolor{red}{R-B Hopf module}}
%\put(45,-4.5){\textcolor{red}{Hopf heap module}}
%\put(90,-4.5){\textcolor{red}{R-B Hopf heap module}}
%\put(71.55,63.25){\textcolor{red}{\vector(1,0){18.25}}}
%\put(77,-3.5){\textcolor{red}{\vector(1,0){12.75}}}

%\put(59,13){\textcolor{red}{\vector(0,-1){15}}}
%\put(107,13){\textcolor{red}{\vector(0,-1){14}}}
%\put(60,48.5){\textcolor{red}{\vector(0,1){13}}}
%\put(105,48.5){\textcolor{red}{\vector(0,1){13}}}

%\end
%\put(105.25,63.5){\textcolor{red}{\vector(0,-1){27}}}
%\put(106.67,46.5){\textcolor{red}{\vector(0,1){28}}}

%\put(-4,-6){Hopf module}
%\put(90,60){\textcolor{red}{R-B Hopf module}}
%\put(45,30.5){\textcolor{red}{Hopf heap module}}
%\put(90,30.5){\textcolor{red}{R-B Hopf heap module}}
%\put(71.55,63.25){\textcolor{red}{\vector(1,0){18.25}}}
%\put(77,26.5){\textcolor{red}{\vector(1,0){12.75}}}

%TeXCAD Picture [333333333333333.pic]. Options:
%\grade{\on}
%\emlines{\off}
%\epic{\off}
%\beziermacro{\on}
%\reduce{\on}
%\snapping{\off}
%\quality{8.00}
%\graddiff{0.01}
%\snapasp{1}
%\zoom{4.0000}

\put(43,2.25){\textcolor{red}{Hopf heap module}}

\put(92,2.25){\textcolor{red}{Rota-Baxter  Hopf heap module}}

\put(59,27.25){\textcolor{red}{\vector(0,-1){22.5}}}
%\put(40.5,6.25){\textcolor{red}{\vector(0,1){27.5}}}

%\put(48,36){\textcolor{red}{\vector(1,0){32}}}
\put(75.8,3.25){\textcolor{red}{\vector(1,0){14.5}}}

\put(106.25,27.5){\textcolor{red}{\vector(0,-1){22}}}
%\put(95.67,6.5){\textcolor{red}{\vector(0,1){28}}}
%\put(49,62){Hopf module}
%\put(90,62){\textcolor{red}{R-B Hopf module}}
%\put(45,-4.5){\textcolor{red}{Hopf heap module}}
%\put(90,-4.5){\textcolor{red}{R-B Hopf heap module}}
%\put(71.55,63.25){\textcolor{red}{\vector(1,0){18.25}}}
%\put(77,-3.5){\textcolor{red}{\vector(1,0){12.75}}}

%\put(59,13){\textcolor{red}{\vector(0,-1){15}}}
%\put(107,13){\textcolor{red}{\vector(0,-1){14}}}
%\put(60,48.5){\textcolor{red}{\vector(0,1){13}}}
%\put(105,48.5){\textcolor{red}{\vector(0,1){13}}}

%\end
%\put(105.25,63.5){\textcolor{red}{\vector(0,-1){27}}}
%\put(106.67,46.5){\textcolor{red}{\vector(0,1){28}}}

%\put(-4,-6){Hopf module}
%\put(90,60){\textcolor{red}{R-B Hopf module}}
%\put(45,30.5){\textcolor{red}{Hopf heap module}}
%\put(90,30.5){\textcolor{red}{R-B Hopf heap module}}
%\put(71.55,63.25){\textcolor{red}{\vector(1,0){18.25}}}
%\put(77,26.5){\textcolor{red}{\vector(1,0){12.75}}}
\end{picture}

Overall, the paper is organized as follows. In Section \mref{sec:hp}, we recall the notion of a Hopf heap and give some of its properties.

In Section \mref{sec:hpht}, we give a equivalent definition of a Hopf truss by a Hopf heap. Using this equivalence definition of a Hopf truss, we construct some examples of Hopf trusses and Hopf heaps.

In Section \mref{sec:hpm}, we introduce the definition of Hopf heap modules and establish the structure theorem for such modules.

In section \mref{sec:rbohp}, we give the definition of a Rota-Baxter operator on a Hopf heap and construct a new Hopf heap structure via such an operator.

In Section \mref{sec:rbohpm}, a Rota-Baxter operator on a Hopf heap module is proposed, and its structure theorem is presented simultaneously.

{\bf Notations.} Throughout this paper, $\bfk$ is always considered in a fixed field. Unless otherwise specified, linearity, modules and tensor products are all taken over $\bfk$. And we freely use the Hopf algebra terminology introduced in \mcite{Sweedler}, and denote its  antipode of the Hopf algebra by $S$. For brevity, we write a comultiplication $\D(c)$ as $c_{1}\o c_{2}$ of a coalgebra $C$ without the summation sign, and denote its opposite coalgebra by $C^{cop}$, with the comultiplication $\D^{cop}(c)$ as $c_{2}\o c_{1}$, for $c\in C$. The set of group-like elements of a coalgebra $C$ is recorded as $G(C)$ and the assumption is made that $G(C)$ is always non-empty in this paper.

\section{Hopf heaps}
\mlabel{sec:hp}

In this section, we recall the definition of Hopf heaps and derive several results concerning Hopf heaps from the relationship between Hopf heaps and Hopf algebras.

\vspace{2mm}

\begin{defn}\mlabel{def:hopfheap}

 \mcite{BH} A Hopf heap is a coalgebra $C$ together with a coalgebra map
 $$
 \chi:C\o C^{co}\o C\rightarrow C, ~~a\o b\o c\mapsto [a,b,c]
 $$
 such that for all $a,b,c,d,h\in C$,
 \begin{eqnarray}
 &&[[a,b,c],d,h]=[a,b,[c,d,h]],\\
 &&[c_1,c_2,a]=[a,c_1,c_2]=\v(c)a.
 \end{eqnarray}
 The Hopf heap is denoted by $(C,\chi)$ or $(C,[-,-,-])$.
\end{defn}

\begin{remark}

\begin{enumerate}
\item A Hopf heap $C$ is said to be commutative, if $[x,y,z]=[z,y,x]$, for all $x,y,z\in C$.

\item Let $C$ and $D$ be Hopf heaps. A Hopf heap homorphism $f:C\rightarrow D$ is a coalgebra map such that $f([a,b,c])=[f(a),f(b),f(c)]$, for all $a,b,c\in C$. Denote the category of Hopf heaps by $\mathbf{HP}$.

 \item Let $(C,[---])$ be a cocommutative Hopf heap (that is, it is a cocommutative coalgebra as a Hopf heap). Then $(C,[---]^{op})$ is also a Hopf heap, where $[x,y,z]^{op}=[z,y,x]$, for all $x,y,z\in C$.
\end{enumerate}
\end{remark}

\begin{exam}\mlabel{exam:hopfheap1}
\begin{enumerate}
  \item Let $C$ be a vector space with basis $\{u,\theta\}$. Define
$$
\D(u)=u\o \theta+ \theta \o u, ~~\D(\theta)=\theta\o \theta-u\o u,
$$
$$\v(u)=0, ~~\v(\theta)=1.
$$

Then $(C,\D, \v)$ is a coalgebra.

Define
$$
[u,u,u]=-u, [\theta,\theta,\theta]=\theta, ~\text{else is zero}.
$$

Then it easy to prove that $(C,[-,-,-])$ is a commutative Hopf heap.

 \item Let $(C, \D_{C}, [-,-,-]_{C})$ and $(D, \D_{D}, [-,-,-]_{D})$ be two Hopf heaps. Then $C\o D$ is a Hopf heap with the following structures
$$\D_{C\o D}(a\o h)=a_{1}\o h_{1}\o a_{2}\o h_{2},$$
$$\v_{C\o D}(a\o h)=\v_{C}(a)\v_{D}(h),$$
$$[a\o h,b\o g,c\o f]_{C\o D}=[a,b,c]_{C}\o [h,g,f]_{D},$$
for any $a,b,c\in C; g,h,f\in D$.

%It is easy to see $(C\o D,\varDelta_{C\o D})$ is a coalgebra.
%\begin{eqnarray*}
	%&&[a\o h,b\o g,c\o f]_{1}\o [a\o h,b\o g,c\o f]_{2}\\
	%&=& ([a,b,c]_{C}\o [h,g,f]_{D})_{1}\o ([a,b,c]_{C}\o [h,g,f]_{D})_{2} \\
	%&=& [a_{1},b_{2},c_{1}]_{C}\o [h_{1},g_{2},f_{1}]_{D}\o [a_{2},b_{1},c_{2}]_{C}\o [h_{2},g_{1},f_{2}]_{D}\\
	%&=& [a_{1}\o h_{1},b_{2}\o g_{2},c_{1}\o f_{1}]_{C\o D}\o [a_{2}\o h_{2},b_{1}\o g_{1},c_{2}\o f_{2}]_{C\o D}\\
	%&=& [(a\o h)_{1},(b\o g)_{2},(c\o f)_{1}]_{C\o D}\o [(a\o h)_{2},(b\o g)_{1},(c\o f)_{2}]_{C\o D}
%\end{eqnarray*}
%\begin{eqnarray*}
	%[[a\o h,b\o g,c\o f]_{C\o D},d\o m,e\o n]_{C\o D}
	%&=& [[a,b,c]_{C}\o [h,g,f]_{D},d\o m,e\o n]_{C\o D}\\
	%&=& [[a,b,c]_{C},d,e]_{C}\o [[h,g,f]_{D},m,n]_{D}\\
	%&=& [a,b,[c,d,e]_{C}]_{C}\o [h,g,[f,m,n]_{D}]_{D}\\
%&=& [a\o h,b\o g,[c,d,e]_{C}\o [f,m,n]_{D}]_{C\o D}\\
	%	&=& [a\o h,b\o g,[c\o f,d\o m,e\o n]_{C\o D}]_{C\o D}\\
%\end{eqnarray*}
%\begin{eqnarray*}
	%[(a\o h)_{1},(a\o h)_{2},b\o g]_{C\o D}
	%&=& [a_{1}\o h_{1},a_{2}\o h_{2},b\o g]_{C\o D}\\
	%&=& [a_{1},a_{2},b]_{C}\o [h_{1},h_{2},g]_{D}\\
	%&=& \varepsilon(a)b\o \varepsilon(h)g\\
	%&=&  \varepsilon(a\o h)b\o g
%\end{eqnarray*}
%Similarly, $[b\o g,(a\o h)_{1},(a\o h)_{2}]_{C\o D}=\varepsilon(a\o h)b\o g$.
%Then, $ (C\o D,\varDelta_{C\o D}, [-,-,-]_{C\o D} )$ is a Hopf heap.
\end{enumerate}
\end{exam}

\begin{exam} \mcite{BH}\mlabel{exam:hopfheaphopfalg}
Let $H$ be a Hopf algebra with antipode $S$. Then $H$ is a Hopf heap with the operation:
$$[a,b,c]=aS(b)c,$$
 for all $a,b,c\in H$. This Hopf heap associated to the Hopf algebra $H$ is denoted by $Hp(H,[-,-,-])$.

Conversely, if $(C,\chi)$ is a Hopf heap, then, for any $x\in G(C)$, the coalgebra $C$ can be made into a Hopf algebra with identity $x$, whose multiplication and antipode are given by
$$
ab=[a,x,b], ~S(a)=[x,a,x],
$$
for all $a,b\in H.$

In what follows, we denote the introduced Hopf algebra by $H_x(C)$.

\end{exam}

\begin{remark} Let $\mathbf{HA}$ be a category of Hopf algebras. Then there is a functor $F: \mathbf{HA} \rightarrow \mathbf{HP}$ from the category of Hopf algebras
to the category of Hopf heaps.

In fact, the functor $F:\mathbf{HA} \rightarrow \mathbf{HP}$ can be given by
$$
F(H)=Hp(H,[-,-,-]), ~~F(f)=f,
$$
where the first homomorphism $f$ is a Hopf algebra map from $H$ to $G$.

\end{remark}

\begin{remark}\mlabel{rem:HaHp}
\begin{enumerate}
 \item Let $(C,\chi)$ be a Hopf heap, $x\in G(C)$. One easily checks that the Hopf heap associated to the Hopf algebra $H_x(C)$ is equal to $C$, that is, $Hp(H_x(C),[-,-,-]_x)=C$.

In fact, we have
$$
[a,b,c]_x=aS(b)c=a[x,b,x]c=[a,x,[x,b,x]]c=[a,b,x]c=[[a,b,x],x,c]=[a,b,c].,
$$for any $a,b,c\in C$.

 \item Let $H$ be a Hopf algebra with antipode $S$ and unit $e$. Then $H_e(Hp(H,[-,-,-]))=H$.

Indeed, by Example \mref{exam:hopfheaphopfalg}, we know that $Hp(H,[-,-,-])$ is a Hopf heap with $[a,b,c]=aS(b)c$, for all $a,b,c\in H$, and $H_e(Hp(H,[-,-,-]))$ is a Hopf algebra with
$$a\c_e b=[a,e,b]=aS(e)b=ab,$$
$$S_e(a)=[e,a,e]=eS(a)e=S(a).$$
Hence $H=H_e(H,[-,-,-])$.

\end{enumerate}
\end{remark}

\begin{defn}\mcite{BH}
  A Grunspan map for a Hopf heap $C$ is a coalgebra homomorphism $\vartheta: C \rightarrow C$, such that
	$$ [[a, b, \vartheta(c)], d, h] = [a, [d, c, b], h],$$
for all $a, b, c, d, h \in C$.
\end{defn}

\begin{remark}\mlabel{rem:HAHG}\mcite{BH}
Every Hopf heap admits the Grunspan map. The Grunspan map is given by
$$\vartheta : C \rightarrow C, c \mapsto\sum [c_{1}, [f_{1}, c_{3}, c_{2}],f_{2}],$$
	where $f \in C$ is a given element with $\v(f) = 1$.

\end{remark}

\begin{prop}
Let $C$ be a commutative Hopf heap. Then
\begin{eqnarray}
	&&[[a,b,c],d,h]=[a,[b,c,d],h]=[a,b,[c,d,h]],
\end{eqnarray}
for all $a,b,c,d,h\in C$.
\end{prop}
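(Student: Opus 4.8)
The equality $[[a,b,c],d,h]=[a,b,[c,d,h]]$ is simply the associativity axiom of a Hopf heap (Definition \mref{def:hopfheap}), so nothing has to be done there; the real content is the middle equality, which I would establish in the equivalent form $[a,[b,c,d],h]=[[a,b,c],d,h]$. The plan is to transport this identity to the associated Hopf algebra, where it becomes a transparent statement about the antipode.

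Since $G(C)\neq\emptyset$ by the standing assumption, fix $x\in G(C)$ and pass to the Hopf algebra $H_x(C)$ of Example \mref{exam:hopfheaphopfalg}, with product $ab=[a,x,b]$ and antipode $S(a)=[x,a,x]$; by Remark \mref{rem:HaHp}(1) the heap operation is recovered as $[u,v,w]=uS(v)w$, the product and antipode being those of $H_x(C)$. First I would note that $H_x(C)$ is commutative: for $u,v\in C$ one has $uv=[u,x,v]=[v,x,u]=vu$, using the commutativity of the heap with middle entry $x$. Hence $S^2=\mathrm{id}$ on $H_x(C)$, by the classical fact that the antipode of a commutative Hopf algebra is involutive (see \mcite{Sweedler}). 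Now I would compute, applying in turn the heap commutativity on the inner bracket, the formula $[u,v,w]=uS(v)w$, the anti-multiplicativity $S(yz)=S(z)S(y)$, and $S^2=\mathrm{id}$:
\[
[a,[b,c,d],h]=[a,[d,c,b],h]=aS(dS(c)b)h=aS(b)S^2(c)S(d)h=aS(b)cS(d)h,
\]
whereas $[[a,b,c],d,h]=[a,b,c]S(d)h=aS(b)cS(d)h$. The two sides therefore agree, and together with the associativity axiom this gives the chain of three equalities claimed in the proposition.

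The only non-formal ingredient above is $S^2=\mathrm{id}$, and this is exactly the place where the commutativity hypothesis on $C$ enters; without it the statement fails, as one sees by taking $Hp(H)$ for a Hopf algebra $H$ with $S^2\neq\mathrm{id}$. An alternative, entirely intrinsic route would be to show directly that the Grunspan map $\vartheta$ of Remark \mref{rem:HAHG} reduces to the identity when $C$ is commutative — for $Hp(H)$ one has $\vartheta=S^2$, so $\vartheta$ plays precisely this role — and then to run the same bracket computation with $\vartheta$ instead of $S^2$; passing through $H_x(C)$ is, however, the quickest.
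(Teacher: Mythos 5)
Your proof is correct, but it takes a genuinely different route from the paper's. The paper works intrinsically with the Grunspan map: it first shows that for a commutative Hopf heap the Grunspan map $\vartheta(c)=[c_{1},[f_{1},c_{3},c_{2}],f_{2}]$ collapses to the identity (using heap commutativity on the inner bracket), and then reads the claimed identity off the defining property $[[a,b,\vartheta(c)],d,h]=[a,[d,c,b],h]$ together with one more application of commutativity. You instead transport the identity to the associated Hopf algebra $H_x(C)$, observe that heap commutativity makes $H_x(C)$ a commutative Hopf algebra, and reduce the middle equality to the classical fact $S^{2}=\mathrm{id}$; this is a clean and fully elementary verification once the transfer via Remark \mref{rem:HaHp} is in place, and it makes the role of the commutativity hypothesis completely transparent (it is exactly what forces $S^{2}=\mathrm{id}$). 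What the paper's route buys is that it needs no group-like element, only some $f$ with $\v(f)=1$, and it stays entirely inside the heap formalism; what your route buys is that every step is a standard Hopf-algebra manipulation, whereas the paper's intermediate step $[c_{1},[c_{2},c_{3},f_{1}],f_{2}]=[c,f_{1},f_{2}]$ is left rather terse. Amusingly, the ``alternative, entirely intrinsic route'' you sketch at the end --- showing $\vartheta=\mathrm{id}$ in the commutative case and then invoking the Grunspan identity --- is precisely the proof the paper actually gives.
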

\begin{proof}
	If $C$ is commutative, then, by Remark \mref{rem:HAHG}, we get
	\begin{eqnarray*}
		\vartheta(c)
		&=&[c_{1}, [e_{1}, f_{3}, c_{2}],f_{2}]=[c_{1}, [c_{2}, c_{3}, f_{1}],f_{2}]=[c,  f_{1},f_{2}]=\v(f)c=c,
	\end{eqnarray*}for any $c\in C$, where $\v(f) = 1$.
	Furthermore,  we have
	$$ [[a, b, c], d, h]=[[a, b, \vartheta(c)], d, h] = [a, [d, c, b], h]=[a, [b, c, d], h],$$
for all $a,b,c,d,h\in C$.
	\end{proof}

In the following, we present a sufficient and necessary condition for a Hopf heap homomorphism to be a Hopf algebra homomorphism.

\begin{prop}\mlabel{prop:hahphomomo}
Let $H$ and $G$ be Hopf heaps, and $x\in G(H)$ and $x'\in G(G)$. Then $f:H\rightarrow G$ is a Hopf heap homomorphism and $f(x)=x'$, if and only if $f:H_x(H)\rightarrow H_{x'}(G)$ is a Hopf algebra homomorphism.

In particular, $f:H\rightarrow G$ is a Hopf heap homomorphism, if and only if $f:H_x(H)\rightarrow H_{f(x)}(G)$ is a Hopf algebra homomorphism.
\end{prop}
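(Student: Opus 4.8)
The plan is to verify directly that the two defining conditions are equivalent, using the explicit formulas $ab = [a,x,b]$ and $S(a) = [x,a,x]$ that convert the heap structure into the Hopf algebra structure $H_x(H)$ (and similarly $a\cdot' b = [a,x',b]$ on $H_{x'}(G)$). Since in both settings the underlying coalgebra and the coalgebra map $f$ are unchanged, $f$ is automatically a coalgebra map in both pictures; so the content is entirely about compatibility with the heap operation versus compatibility with multiplication, unit, and antipode.

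First I would prove the forward direction. Assume $f$ is a Hopf heap homomorphism with $f(x)=x'$. Then $f(ab) = f([a,x,b]) = [f(a),f(x),f(b)] = [f(a),x',f(b)] = f(a)\cdot' f(b)$, so $f$ is multiplicative; $f$ sends the unit $x$ to the unit $x'$ by hypothesis; and $f(S(a)) = f([x,a,x]) = [x',f(a),x'] = S'(f(a))$, so $f$ commutes with antipodes. Hence $f:H_x(H)\to H_{x'}(G)$ is a bialgebra map between Hopf algebras, i.e.\ a Hopf algebra homomorphism.

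Next I would prove the converse. Assume $f:H_x(H)\to H_{x'}(G)$ is a Hopf algebra homomorphism; in particular $f(x)=x'$ since a Hopf algebra map preserves the unit. By Remark \ref{rem:HaHp}(a), the Hopf heap underlying $H_x(H)$ is $H$ itself, with $[a,b,c] = ab\,S(b)\,b\,c$ recovered — more precisely, by that remark the original ternary operation can be written through the Hopf algebra structure as $[a,b,c]_x = a\,S(b)\,c$. Therefore
\begin{eqnarray*}
f([a,b,c]) &=& f\bigl(a\,S(b)\,c\bigr) = f(a)\,\cdot'\, f(S(b))\,\cdot'\, f(c) = f(a)\,\cdot'\, S'(f(b))\,\cdot'\, f(c)\\
&=& [f(a),f(b),f(c)],
\end{eqnarray*}
using multiplicativity of $f$, compatibility with the antipode, and Remark \ref{rem:HaHp}(a) again (applied to $G$) for the last equality. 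Thus $f$ is a Hopf heap homomorphism, completing the equivalence. The "in particular" clause is the special case $x' = f(x)$, which is immediate from what has been shown since the hypothesis $f(x)=x'$ then holds automatically.

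I do not expect a serious obstacle here: the only subtlety is making sure the identification in Remark \ref{rem:HaHp}(a) — that recovering the Hopf algebra from the heap and then the heap from that Hopf algebra returns the original heap — is invoked correctly, so that $[a,b,c]$ really does equal $a\,S(b)\,c$ in terms of the structure of $H_x(H)$; once that is in hand, both implications are one-line computations.
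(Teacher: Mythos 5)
Your proposal is correct and follows essentially the same route as the paper: the forward direction is the direct computation $f([a,x,b])=[f(a),x',f(b)]$ (plus the unit and antipode checks), and the converse uses Remark \ref{rem:HaHp}(a) to write $[a,b,c]=aS(b)c$ in terms of the Hopf algebra structure of $H_x(H)$ and then applies multiplicativity and antipode-compatibility of $f$. The only blemish is the garbled intermediate formula ``$[a,b,c]=ab\,S(b)\,b\,c$'' in your converse, which you immediately correct to $[a,b,c]_x=a\,S(b)\,c$; with that fixed the argument matches the paper's.
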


\begin{proof}
It is easy to see that $f:H_x(H)\rightarrow H_{x'}(G)$ is a Hopf algebra homomorphism, if and only if
$$f([a,x,b])=[f(a),x',f(b)],$$ for all $a,b\in H$, and $f$ is a coalgebra map.

Suppose that $f:H\rightarrow G$ is a Hopf heap homomorphism and $f(x)=x'$. Then $f$ is a coalgebra map, and
 $$f([a,x,b])=[f(a),f(x),f(b)]=[f(a),x',f(b)],$$
 for all $a,b\in H$,
 that is, $f:H_x(H)\rightarrow H_{x'}(G)$ is an algebra map. So $f:H\rightarrow G$ is a Hopf heap homomorphism.

Conversely, if $f:H_x(H)\rightarrow H_{x'}(G)$ is a Hopf algebra homomorphism, then $f(x)=x'$, and for any $a,b,c\in H$,
\begin{eqnarray*}
&&f([a,b,c]_x)=f(aS_H(b)c)=f(a)f(S_H(b))f(c)=f(a)S_G(f(b))f(c)=[f(a),f(b),f(c)]_{x'}.
\end{eqnarray*}

Hence $f:Hp(H_x(H),[-,-,-]_x)\rightarrow Hp(H_{x'}(G),[-,-,-]_{x'})$ is a Hopf heap homomorphism.

By Remark \mref{rem:HaHp}, we know that $Hp(H_x(H),[-,-,-]_x)=H$ and $Hp(H_{x'}(G),[-,-,-]_{x'})=G$, so $f:H\rightarrow G$ is a Hopf heap homomorphism.
\end{proof}

\begin{lemma}\mlabel{lem:Hppro}
Let $H$ be a commutative Hopf heap. Then the following equality is satisfied:
\begin{eqnarray}
&&[[w,w',w''],[y,y',y''],[z,z',z'']]=[[w,y,z],[w',y',z'],[w'',y'',z'']],
\end{eqnarray}
for all $w,y,z,w',y',z',w'',y'',z''\in H$.
\end{lemma}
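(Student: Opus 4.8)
The plan is to reduce the identity to a short computation inside a Hopf algebra. Since $G(H)$ is assumed non-empty, I would fix an element $x\in G(H)$ and pass to the Hopf algebra $H_x(H)$ of Example \ref{exam:hopfheaphopfalg}, whose product is $ab=[a,x,b]$ and whose antipode is $S(a)=[x,a,x]$. Because $H$ is a commutative Hopf heap we have $ab=[a,x,b]=[b,x,a]=ba$, so $H_x(H)$ is a \emph{commutative} Hopf algebra; in particular its antipode $S$ is an algebra endomorphism with $S^{2}=\mathrm{id}$. By Remark \ref{rem:HaHp}(1) the original ternary operation is recovered as $[a,b,c]=aS(b)c$ for all $a,b,c\in H$, so the claimed identity is equivalent to
$$\bigl(wS(w')w''\bigr)\,S\!\bigl(yS(y')y''\bigr)\,\bigl(zS(z')z''\bigr)=\bigl(wS(y)z\bigr)\,S\!\bigl(w'S(y')z'\bigr)\,\bigl(w''S(y'')z''\bigr)$$
in $H_x(H)$.

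To prove this, I would first expand $S\!\bigl(yS(y')y''\bigr)=S(y)\,y'\,S(y'')$, using that $S$ is multiplicative on the commutative algebra $H_x(H)$ and that $S^{2}=\mathrm{id}$. The left-hand side then becomes the nine-term product $wS(w')w''\cdot S(y)\,y'\,S(y'')\cdot zS(z')z''$, and commutativity of $H_x(H)$ allows it to be regrouped as $\bigl(wS(y)z\bigr)\bigl(S(w')\,y'\,S(z')\bigr)\bigl(w''S(y'')z''\bigr)$. Finally, the same properties of $S$ give $S\!\bigl(w'S(y')z'\bigr)=S(w')\,y'\,S(z')$, so the regrouped expression is precisely $[w,y,z]\,S\bigl([w',y',z']\bigr)\,[w'',y'',z'']=[[w,y,z],[w',y',z'],[w'',y'',z'']]$, the desired right-hand side.

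I do not anticipate a real obstacle. The only step that genuinely uses the hypothesis is the observation that commutativity of the Hopf heap forces $H_x(H)$ to be a commutative Hopf algebra, so that $S$ is a homomorphism with $S^{2}=\mathrm{id}$; after that the argument is just bookkeeping of a nine-factor product. One could alternatively stay entirely within the heap formalism, iterating the associativity axioms of Definition \ref{def:hopfheap}, the identity $[[a,b,c],d,h]=[a,[b,c,d],h]$ valid for commutative Hopf heaps (proved above), and the fact that the Grunspan map is the identity in the commutative case; but this route is longer and less transparent than passing through $H_x(H)$, so I would present the latter.
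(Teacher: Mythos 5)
Your proposal is correct and follows essentially the same route as the paper: fix $x\in G(H)$, observe that $H_x(H)$ is a commutative Hopf algebra (so $S$ is multiplicative with $S^2=\mathrm{id}$), rewrite both sides as nine-factor products via $[a,b,c]=aS(b)c$, and regroup using commutativity. No gaps.
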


\begin{proof} It is easy to see that $H_x(H)$ is a commutative Hopf algebra with $x\in G(H)$. Moreover we have
\begin{eqnarray*}
[[w,w',w''],[y,y',y''],[z,z',z'']]&=&[[w,w',w'']_x,[y,y',y'']_x,[z,z',z'']_x]_x\\
&=&wS(w')w''S(y'')y'S(y)zS(z')z''\\
&=&wS(y)zS(w')y'S(z')w''S(y'')z''\\
&=&[w,y,z]_x[S(w'),S(y'),S(z')]_x[w'',y'',z'']_x\\
&=&[w,y,z]_xS[w',y',z']_x[w'',y'',z'']_x\\
&=&[[w,y,z]_x,[w',y',z']_x,[w'',y'',z'']_x]_x\\
&=&[[w,y,z],[w',y',z'],[w'',y'',z'']],
\end{eqnarray*}
for any $w,y,z,w',y',z',w'',y'',z''\in H$.
\end{proof}

\begin{defn}\mcite{BH}
Let $(C,\chi)$ be a Hopf heap. We call the linear map
$$
\tau^b_{a}:C\rightarrow C, ~c\mapsto \chi(c\o a\o b)=[c,a,b],
$$
a right $(a,b)$-translation, for $a,b\in C$.

Symmetrically, we call the linear map
$$
\s^b_{a}:C\rightarrow C, ~c\mapsto \chi(a\o b\o c)=[a,b,c],
$$
a left $(a,b)$-translation, for  $a,b\in C$.

\end{defn}

\begin{remark} %\mcite{BH}

Let $(C,\chi)$ be a Hopf heap. Then, we can obtain the following equalities:
$$\tau^d_c\circ \tau^b_a=\tau^{[b,c,d]}_a,$$
$$\sigma^b_a\circ\sigma^d_c=\sigma_{[a,b,c]}^d,$$
for all $a,b,c,d\in C$.
\end{remark}

\begin{lemma}\mlabel{lem:cohp}
Let $H$ be a Hopf heap, $G$ a commutative Hopf heap, and let $f:H\rightarrow G$ a coalgebra map. Then $f:H\rightarrow G$ is a Hopf heap homomorphism, if and only if $\tau^{x'}_{f(x)}\circ f:H\rightarrow G$ is a Hopf heap homomorphism, for all $x\in G(H)$, and $x'\in G(G)$.
\end{lemma}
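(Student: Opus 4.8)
The plan is to reduce the asserted equivalence to a single cancellation property of the right translation $\tau^{x'}_{f(x)}$. First I would dispose of the coalgebra‑map bookkeeping: since $f$ is a coalgebra map and $x\in G(H)$, the element $f(x)$ lies in $G(G)$, so $c\mapsto c\o f(x)\o x'$ is a coalgebra map $C\to C\o C^{co}\o C$; composing with $\chi$ shows $\tau^{x'}_{f(x)}$ is a coalgebra map, hence so is $g:=\tau^{x'}_{f(x)}\circ f$, which is the map $a\mapsto [f(a),f(x),x']$. Therefore, for each of $f$ and $g$, being a Hopf heap homomorphism is equivalent to preserving the ternary bracket, and what must be proved is the equivalence of the two bracket‑preservation conditions.

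The key computation is to expand the triple bracket of the translated arguments: $[g(a),g(b),g(c)]=\bigl[[f(a),f(x),x'],[f(b),f(x),x'],[f(c),f(x),x']\bigr]$. Since $G$ is commutative, Lemma~\mref{lem:Hppro} rewrites this as $\bigl[[f(a),f(b),f(c)],[f(x),f(x),f(x)],[x',x',x']\bigr]$, and because $f(x)$ and $x'$ are group‑like, the heap axiom $[c_1,c_2,a]=[a,c_1,c_2]=\varepsilon(c)a$ collapses $[f(x),f(x),f(x)]$ to $f(x)$ and $[x',x',x']$ to $x'$. Hence $[g(a),g(b),g(c)]=\tau^{x'}_{f(x)}\bigl([f(a),f(b),f(c)]\bigr)$, while by definition $g([a,b,c])=\tau^{x'}_{f(x)}\bigl(f([a,b,c])\bigr)$. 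So "$g$ preserves the bracket" is precisely the statement $\tau^{x'}_{f(x)}\bigl(f([a,b,c])\bigr)=\tau^{x'}_{f(x)}\bigl([f(a),f(b),f(c)]\bigr)$ for all $a,b,c\in H$. The "only if" direction then follows by applying $\tau^{x'}_{f(x)}$ to both sides of $f([a,b,c])=[f(a),f(b),f(c)]$.

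For the "if" direction I need to cancel $\tau^{x'}_{f(x)}$, i.e.\ to know it is injective. I would do this by producing the one‑sided inverse $\tau^{f(x)}_{x'}$: using the composition rule $\tau^d_c\circ\tau^b_a=\tau^{[b,c,d]}_a$ for right translations established above, $\tau^{f(x)}_{x'}\circ\tau^{x'}_{f(x)}=\tau^{[x',x',f(x)]}_{f(x)}=\tau^{f(x)}_{f(x)}$ (here $[x',x',f(x)]=\varepsilon(x')f(x)=f(x)$), and $\tau^{f(x)}_{f(x)}(c)=[c,f(x),f(x)]=\varepsilon(f(x))c=c$, again by the heap axiom and group‑likeness of $f(x)$. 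Cancelling $\tau^{x'}_{f(x)}$ from the displayed identity gives $f([a,b,c])=[f(a),f(b),f(c)]$, so $f$ is a Hopf heap homomorphism.

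The only genuinely nontrivial point — the main obstacle — is recognizing that the triple bracket of the three translated arguments collapses, via Lemma~\mref{lem:Hppro} together with the group‑like simplifications, to a single translation of $[f(a),f(b),f(c)]$, and that this translation is invertible because $f(x)$ and $x'$ are group‑like. Once these two observations are in hand, both implications are immediate, and the remaining work is routine tracking of which of the two heap axioms is being invoked at each group‑like reduction.
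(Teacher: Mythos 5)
Your proposal is correct and follows essentially the same route as the paper: both reduce the statement to the identity $[\tau^{x'}_{f(x)}f(a),\tau^{x'}_{f(x)}f(b),\tau^{x'}_{f(x)}f(c)]=\tau^{x'}_{f(x)}([f(a),f(b),f(c)])$ via Lemma~\mref{lem:Hppro} and the group-like collapses, and both cancel $\tau^{x'}_{f(x)}$ by composing with $\tau^{f(x)}_{x'}$ using the translation composition rule. No gaps.
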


\begin{proof}
Since $f$ is a coalgebra map, $f(x)\in G(G)$.

 By Lemma \mref{lem:Hppro}, we can prove that $\tau^{x'}_{f(x)}: G\rightarrow G$ is a Hopf heap homomorphism: for all $a,b,c\in G$, \begin{eqnarray*}
 [\tau^{x'}_{f(x)}(a),\tau^{x'}_{f(x)}(b),\tau^{x'}_{f(x)}(c)] &=&[[a,f(x),x'],[b,f(x),x'],[c,f(x),x']]\\
 &=&[[a,b,c],[f(x),f(x),f(x)],[x',x',x']]\\
 &=&[[a,b,c],f(x),x']\\
 &=&\tau^{x'}_{f(x)}([a,b,c]),
 \end{eqnarray*}
and
 \begin{eqnarray*}
 \D(\tau^{x'}_{f(x)}(a))&=&\D([a,f(x),x'])\\
 &=&[a_1,f(x),x']\o [a_2,f(x),x']\\
 &=&\tau^{x'}_{f(x)}(a_1)\o \tau^{x'}_{f(x)}(a_2).
 \end{eqnarray*}

 Hence $\tau^{x'}_{f(x)}\circ f:H\rightarrow G$ is a Hopf heap homomorphism, if and only if $\tau^{x'}_{f(x)}\circ f$ is a coalgebra map and
 $$(\tau^{x'}_{f(x)}\circ f)([h,h',h''])=[\tau^{x'}_{f(x)}\circ f(h),\tau^{x'}_{f(x)}\circ f(h'),\tau^{x'}_{f(x)}\circ f(h'')]=\tau^{x'}_{f(x)}([f(h), f(h'),f(h'')]),$$
 for any $h,h',h''\in H,$
 that is, $$\tau^{x'}_{f(x)} (f[h,h',h''])=\tau^{x'}_{f(x)}([f(h), f(h'),f(h'')]).$$

Suppose that $\tau^{x'}_{f(x)}\circ f:H\rightarrow G$ is a Hopf heap homomorphism. Then we have
$$\tau^{f(x)}_{x'}\circ \tau^{x'}_{f(x)}(f[h,h',h''])=\tau^{f(x)}_{x'}\circ \tau^{x'}_{f(x)}([f(h), f(h'),f(h'')]).$$

 Moreover
 $$\tau^{f(x)}_{x'}\circ \tau^{x'}_{f(x)}(a)=\tau^{[x',x',f(x)]}_{f(x)}(a)
 =\tau^{f(x)}_{f(x)}(a)=[a,f(x),f(x)]=a,
 $$
 for any $a\in H$, so $f([h,h',h''])=[f(h), f(h'),f(h'')]$,  that is, $f:H\rightarrow G$ is a Hopf heap homomorphism.

 Conversely, if $f:H\rightarrow G$ is a Hopf heap homomorphism, that is, for any $h,h',h''\in H,$
  $$f([h,h',h''])=[f(h), f(h'),f(h'')],$$
then we get
 $$\tau^{x'}_{f(x)} (f[h,h',h''])=\tau^{x'}_{f(x)}([f(h), f(h'),f(h'')]),$$
 so, by the above proof, we know that $\tau^{x'}_{f(x)}\circ f:H\rightarrow G$ is a Hopf heap homomorphism.
\end{proof}

\begin{prop}\mlabel{coro:cohp}
Let $H$ be a Hopf heap, $G$ a commutative Hopf heap, and let $f:H\rightarrow G$ be a coalgebra map. Then $f:H\rightarrow G$ is a Hopf heap homomorphism if and only if $\tau^{x'}_{f(x)}\circ f:H_x(H)\rightarrow H_{x'}(G)$ is a Hopf algebra homomorphism.

Here $x\in G(H)$ and $x'\in G(G)$.

\end{prop}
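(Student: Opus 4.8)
The plan is to deduce this statement from Lemma~\mref{lem:cohp} and Proposition~\mref{prop:hahphomomo}, so that essentially nothing new has to be verified beyond one short computation identifying a group-like element. Set $g := \tau^{x'}_{f(x)}\circ f\colon H\to G$. Since $f$ is a coalgebra map and $x\in G(H)$, we have $f(x)\in G(G)$, and hence, by the counit-type axiom $[c_1,c_2,a]=\v(c)a$ of Definition~\mref{def:hopfheap} together with $\v(f(x))=1$,
\[
g(x)=\tau^{x'}_{f(x)}\bigl(f(x)\bigr)=[f(x),f(x),x']=\v(f(x))\,x'=x'.
\]
Thus $g$ automatically carries the chosen group-like element $x$ of $H$ to the chosen group-like element $x'$ of $G$.

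Next I would invoke Lemma~\mref{lem:cohp}: because $G$ is commutative and $f$ is a coalgebra map, $f\colon H\to G$ is a Hopf heap homomorphism if and only if $g=\tau^{x'}_{f(x)}\circ f\colon H\to G$ is a Hopf heap homomorphism. Then I would apply Proposition~\mref{prop:hahphomomo} to the map $g$ itself; since we have just checked that $g(x)=x'$, that proposition says precisely that $g\colon H\to G$ is a Hopf heap homomorphism if and only if $g\colon H_x(H)\to H_{x'}(G)$ is a Hopf algebra homomorphism. Chaining the two equivalences gives: $f\colon H\to G$ is a Hopf heap homomorphism $\iff$ $g$ is a Hopf heap homomorphism $\iff$ $\tau^{x'}_{f(x)}\circ f\colon H_x(H)\to H_{x'}(G)$ is a Hopf algebra homomorphism, which is the assertion.

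I do not expect a genuine obstacle here, since the result is a corollary of the two preceding results; the only points requiring care are (i) that $\tau^{x'}_{f(x)}\colon G\to G$ is a well-defined Hopf heap homomorphism, which is exactly the computation already carried out inside the proof of Lemma~\mref{lem:cohp} and uses the commutativity of $G$ via Lemma~\mref{lem:Hppro}, and (ii) matching the group-like elements correctly: Proposition~\mref{prop:hahphomomo} in the form needed requires the hypothesis $g(x)=x'$ (not merely $g(x)\in G(G)$), which is why the preliminary identity $g(x)=x'$ must be recorded first before the proposition can be applied.
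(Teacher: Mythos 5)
Your proposal is correct and follows essentially the same route as the paper's own proof: compute $(\tau^{x'}_{f(x)}\circ f)(x)=[f(x),f(x),x']=x'$, then chain the equivalence from Proposition \ref{prop:hahphomomo} (applied to $\tau^{x'}_{f(x)}\circ f$, which is legitimate precisely because of that identity) with the equivalence from Lemma \ref{lem:cohp}. Your explicit remark that Proposition \ref{prop:hahphomomo} needs the hypothesis $g(x)=x'$ rather than merely $g(x)\in G(G)$ is exactly the point the paper's proof also records first.
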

\begin{proof}
Since $f$ is a coalgebra map, we know that
\begin{center}
$f(x)\in G(G),$
\end{center}
\begin{center}
$(\tau^{x'}_{f(x)}\circ f)(x)=\tau^{x'}_{f(x)}(f(x))=[f(x),f(x),x']=x'.$
\end{center}

 By Proposition \mref{prop:hahphomomo}, we know that $\tau^{x'}_{f(x)}\circ f:H_x(H)\rightarrow H_{x'}(G)$ is a Hopf algebra homomorphism if and only if $\tau^{x'}_{f(x)}\circ f:H\rightarrow G$ is a Hopf heap homomorphism.

 Again by Lemma \mref{lem:cohp}, $f:H\rightarrow G$ is a Hopf heap homomorphism if and only if $\tau^{x'}_{f(x)}\circ f:H\rightarrow G$ is a Hopf heap homomorphism.

 Hence $f:H\rightarrow G$ is a Hopf heap homomorphism if and only if $\tau^{x'}_{f(x)}\circ f:H_x(H)\rightarrow H_{x'}(G)$ is a Hopf algebra homomorphism.
\end{proof}

\begin{remark}
There is a functor $F' : \mathbf{HP} \rightarrow \mathbf{HA}$ from the category of Hopf heaps
to the category of Hopf algebras.

In fact, the functor $F':\mathbf{HP} \rightarrow \mathbf{HA}$ is given by
$$
F'(H)=H_x(H), ~~F'(g)=\tau^{x'}_{g(x)}\circ g,
$$
where $x\in G(H)$, $x'\in G(G)$ and the homomorphism $g$ is a Hopf heap map from $H$ to $G$.

\end{remark}

\begin{coro}
Let $(G,\D_G,1_G,S_G)$ be a Hopf algebra, $(H,\D_H,1_H,S_H)$ a commutative Hopf algebra, and $f:G\rightarrow H$ a coalgebra map. Then $f: Hp(G,[-,-,-]_G)\rightarrow Hp(H,[-,-,-]_H)$ is a Hopf heap homomorphism, if and only if $\tau^{1_H}_{f(1_G)}\circ f:G\rightarrow H$ is a Hopf algebra homomorphism, where $(\tau^{1_H}_{f(1_G)}\circ f)(x)=f(x)S_H(f(1_G))$, for all $x\in G$.

\end{coro}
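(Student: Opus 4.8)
The plan is to obtain this as an immediate corollary of Proposition~\mref{coro:cohp}, specialised to the distinguished group-like elements $x=1_G$ and $x'=1_H$. First I would check that the hypotheses of Proposition~\mref{coro:cohp} are met in this setting: the unit of any Hopf algebra is group-like, so $1_G\in G(G)$ and $1_H\in G(H)$, and moreover $Hp(H,[-,-,-]_H)$ is a \emph{commutative} Hopf heap, since for all $x,y,z\in H$ one has $[x,y,z]_H=xS_H(y)z=zS_H(y)x=[z,y,x]_H$ by the commutativity of $H$, which is exactly the commutativity axiom for a Hopf heap. Thus Proposition~\mref{coro:cohp} applies with the Hopf heap $H$ there replaced by $Hp(G,[-,-,-]_G)$, the commutative Hopf heap $G$ there replaced by $Hp(H,[-,-,-]_H)$, and the group-likes $1_G$ and $1_H$, together with the coalgebra map $f$.

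Next I would identify the Hopf algebras produced by the functor $H_{(-)}$. By Remark~\mref{rem:HaHp}(2) (i.e. $H_e(Hp(H,[-,-,-]))=H$), we have $H_{1_G}(Hp(G,[-,-,-]_G))=G$ and $H_{1_H}(Hp(H,[-,-,-]_H))=H$ as Hopf algebras. Substituting these identifications into the conclusion of Proposition~\mref{coro:cohp} yields precisely: $f:Hp(G,[-,-,-]_G)\to Hp(H,[-,-,-]_H)$ is a Hopf heap homomorphism if and only if $\tau^{1_H}_{f(1_G)}\circ f:G\to H$ is a Hopf algebra homomorphism.

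Finally I would record the explicit formula for the map $\tau^{1_H}_{f(1_G)}\circ f$. For $x\in G$, unravelling the definition of the right translation and of the heap operation $[-,-,-]_H=(-)S_H(-)(-)$ on $Hp(H,[-,-,-]_H)$ gives $(\tau^{1_H}_{f(1_G)}\circ f)(x)=\tau^{1_H}_{f(1_G)}(f(x))=[f(x),f(1_G),1_H]_H=f(x)S_H(f(1_G))1_H=f(x)S_H(f(1_G))$, as claimed. There is no genuine obstacle here: the only facts that need checking are the group-likeness of the units and the commutativity of $Hp(H,[-,-,-]_H)$, both of which are immediate, after which the statement is a direct substitution into Proposition~\mref{coro:cohp} combined with Remark~\mref{rem:HaHp}(2).
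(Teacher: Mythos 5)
Your proposal is correct and follows essentially the same route as the paper: both deduce the corollary by applying Proposition \ref{coro:cohp} to the Hopf heaps $Hp(G,[-,-,-]_G)$ and $Hp(H,[-,-,-]_H)$ with the group-likes $1_G,1_H$, and then identifying $H_{1_G}(Hp(G,[-,-,-]_G))=G$ and $H_{1_H}(Hp(H,[-,-,-]_H))=H$. Your additional verifications (commutativity of $Hp(H,[-,-,-]_H)$ and the explicit formula $f(x)S_H(f(1_G))$) are correct and only make explicit what the paper leaves implicit.
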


\begin{proof}
By Proposition \mref{coro:cohp}, we know that $f: Hp(G,[-,-,-]_G)\rightarrow Hp(H,[-,-,-]_H)$ is a Hopf heap homomorphism, if and only if $\tau^{1_H}_{f(1_G)}\circ f:H_{1_G}(Hp(G,[-,-,-]_G))\rightarrow H_{1_H}(Hp(H,[-,-,-]_H))$ is a Hopf algebra homomorphism.

Again by the facts that $H_{1_G}(Hp(G,[-,-,-]_G))=G$ and $H_{1_H}(Hp(H,[-,-,-]_H))=H$, we obtain that $\tau^{1_H}_{f(1_G)}\circ f$ is a Hopf algebra homomorphism from $G$ to $H$.
\end{proof}

\section{Hopf heaps and Hopf trusses}\mlabel{sec:hpht}

In this section, we build a connection between Hopf heaps and Hopf trusses. Furthermore, we give a equivalent definition of a Hopf truss by a Hopf heap, and give some constructions of Hopf trusses by Hopf heaps.

\begin{defn}\mcite{B}\mlabel{def:hopftruss} Let $(H,\D,\v)$ be a coalgebra. Assume that there exist two binary operations $``\c"$ and $``\circ"$ on the coalgebra $(H,\D,\v)$  such that $(H,\c,\D,\v)$ is a Hopf algebra (with unit $1$ and antipode $S$), and  $(H,\circ,\D,\v)$ a nonunital bialgebra. We say that $H$ is a (left) Hopf truss if there exists a coalgebra endomorphism $\sigma$ of $(H,\D,\v)$ such that
\begin{eqnarray}
 &&a\circ (bc)=(a_1\circ b)S(\s(a_2))(a_3\circ c),
\end{eqnarray}
for all $a,b,c\in H$.
\end{defn}

In what follows, we denote the Hopf truss $H$ by $(H,\cdot,\circ,\sigma)$, and call the coalgebra endomorphism $\sigma$ a cocyle.

\begin{remark}
It is obvious that every Hopf brace in the sense of \mcite{AGV} is a Hopf truss with the cocycle $\s=$ id.
\end{remark}

\begin{prop} \mlabel{prop:hopftrussandheap}
%If $(H,\chi)$ is a Hopf heap, and $(H,\circ,\D,\v)$ is a bialgebra. For any $a,b,c,d\in H$, $a\circ [b,c,d]=[a_1\circ b, a_2\circ c,a_3\circ d]$, then $(H,\c_e,\circ), e\in G(C)$ is a Hopf truss.

%Conversely, if $(H,\c,\circ,\s)$ is a Hopf truss, then $H$ is a Hopf heap and $a\circ [b,c,d]=[a_1\circ b, a_2\circ c,a_3\circ d]$, for all $a,b,c\in H$.

Let $(H,\chi)$ be a Hopf heap, and $(H,\circ,\D,\v)$ a nonunital bialgebra. Then, $H$ is a Hopf truss if and only if
\begin{eqnarray}
&&a\circ [b,c,d]=[a_1\circ b, a_2\circ c,a_3\circ d],
\end{eqnarray}
 for any $a,b,c,d\in H$.
\end{prop}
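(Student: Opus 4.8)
The plan is to exploit the dictionary between a Hopf heap $(H,\chi)$ and the Hopf algebra $H_x(H)$ for a fixed $x\in G(H)$, translating the heap-ternary distributivity of $``\circ"$ into the classical cocycle distributivity of Definition \ref{def:hopftruss}. Recall from Example \ref{exam:hopfheaphopfalg} and Remark \ref{rem:HaHp} that $H_x(H)$ has multiplication $ab=[a,x,b]$, unit $x$, antipode $S(a)=[x,a,x]$, and that $[a,b,c]=aS(b)c$ in $H_x(H)$. So the two conditions we must relate are, on the one hand, the existence of a coalgebra endomorphism $\sigma$ with $a\circ(bc)=(a_1\circ b)S(\sigma(a_2))(a_3\circ c)$ in $H_x(H)$, and on the other hand the identity $a\circ[b,c,d]=[a_1\circ b,a_2\circ c,a_3\circ d]$.

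First I would prove the ``if'' direction. Assuming $a\circ[b,c,d]=[a_1\circ b,a_2\circ c,a_3\circ d]$ for all $a,b,c,d\in H$, I compute in $H_x(H)$: for $b,c\in H$, $bc=[b,x,c]$, hence
\begin{eqnarray*}
a\circ(bc)&=&a\circ[b,x,c]=[a_1\circ b,\,a_2\circ x,\,a_3\circ c]\\
&=&(a_1\circ b)\,S(a_2\circ x)\,(a_3\circ c),
\end{eqnarray*}
the last step using $[u,v,w]=uS(v)w$ in $H_x(H)$. Now set $\sigma:H\to H$, $\sigma(a):=a\circ x$. Since $``\circ"$ is a coalgebra map on $(H,\D,\v)$ in each argument and $x$ is grouplike, $\sigma$ is a coalgebra endomorphism of $(H,\D,\v)$; and $S(a_2\circ x)=S(\sigma(a_2))$. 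This is exactly the defining identity of a Hopf truss with cocycle $\sigma$, so $(H,\cdot,\circ,\sigma)$ is a Hopf truss (here $\cdot$ is the multiplication of $H_x(H)$).

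For the ``only if'' direction, suppose $(H,\cdot,\circ,\sigma)$ is a Hopf truss, where $\cdot$ is some Hopf algebra structure on the coalgebra $(H,\D,\v)$; by Remark \ref{rem:HaHp} this Hopf algebra is $H_x(H)$ for $x$ its unit (one should note $x\in G(H)$ since the unit of a Hopf algebra is grouplike), so $ab=[a,x,b]$ and $[b,c,d]=bS(c)d$ in terms of $\cdot$ and its antipode $S$. Then, using the truss distributivity and associativity/coassociativity,
\begin{eqnarray*}
a\circ[b,c,d]&=&a\circ\big(b\,S(c)\,d\big)=a\circ\big(b\cdot(S(c)\cdot d)\big)\\
&=&(a_1\circ b)\,S(\sigma(a_2))\,\big(a_3\circ(S(c)\cdot d)\big)\\
&=&(a_1\circ b)\,S(\sigma(a_2))\,(a_3\circ S(c))\,S(\sigma(a_4))\,(a_5\circ d).
\end{eqnarray*}
On the other hand, $[a_1\circ b,a_2\circ c,a_3\circ d]=(a_1\circ b)\,S(a_2\circ c)\,(a_3\circ d)$, so I must check that $S(\sigma(a_2))(a_3\circ S(c))S(\sigma(a_4))=S(a_2\circ c)$ after the appropriate reindexing. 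The key subpoint is to establish the auxiliary identity $S(\sigma(a_1))\,(a_2\circ S(c))\,S(\sigma(a_3))=S(a\circ c)$, or equivalently $(a_1\circ c)\,\sigma(a_2)\,(a_3\circ S(c)) = \sigma(a)\,\varepsilon(c)$ — this should follow from applying truss distributivity to $c\cdot S(c)=\varepsilon(c)1$ and $S(c)\cdot c=\varepsilon(c)1$ together with counitality, exactly the standard manipulation showing $a\circ(-)$ interacts with inverses in a truss. I expect this antipode-bookkeeping step to be the main obstacle: getting the Sweedler indices and the placement of $\sigma$ right, and handling the fact that $``\circ"$ is only a nonunital bialgebra map (so one cannot freely insert $a\circ 1$). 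Once that lemma-level identity is in hand, substituting it back collapses the five-fold sum to the three-fold one and finishes the proof.
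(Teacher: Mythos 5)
Your ``if'' direction is correct and is essentially the paper's own argument run in reverse: fix $x\in G(H)$, use $bc=[b,x,c]$ and $[u,v,w]=uS(v)w$ in $H_x(H)$, and read off the cocycle as $\sigma(a)=a\circ x$ (which is indeed a coalgebra endomorphism since $(H,\circ,\D,\v)$ is a nonunital bialgebra and $x$ is grouplike). For the ``only if'' direction the paper simply cites Theorem 6.4 of \mcite{B}, whereas you attempt a direct computation modelled on the brace case; your architecture (expand $a\circ(bS(c)d)$ twice by truss distributivity and collapse the middle) is the right one, but the step you flag as ``the main obstacle'' is a genuine gap as written, for two reasons.

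First, the auxiliary identity you need is
\begin{equation*}
S(\sigma(a_1))\,(a_2\circ S(c))\,S(\sigma(a_3))=S(a\circ c),
\end{equation*}
and the manipulation you propose does not reach it. Applying truss distributivity to $cS(c)=\v(c)1$ gives
\begin{equation*}
(a_1\circ c)\,S(\sigma(a_2))\,(a_3\circ S(c))=\v(c)\,(a\circ 1),
\end{equation*}
which after convolving on the left with $S(a_1\circ c_1)$ yields $S(\sigma(a_1))(a_2\circ S(c))=S(a_1\circ c)(a_2\circ 1)$ --- note the right-hand factor is $a_2\circ 1$, not $\sigma(a_2)$. To finish you must still cancel $(a_1\circ 1)S(\sigma(a_2))=\v(a)1$ (equivalently, $\sigma(a)=a\circ 1$), and this does \emph{not} follow from counitality: it requires a convolution-invertibility argument (the map $a\o c\mapsto a\circ c$ is a coalgebra map $H\o H\to H$, hence convolution invertible, so one may cancel it in $\kappa\ast(S\sigma)\ast(-\circ c)=(-\circ c)$). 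This is exactly Lemma 6.2 of \mcite{B}, which you neither prove nor cite. Second, your ``equivalent'' reformulation $(a_1\circ c)\,\sigma(a_2)\,(a_3\circ S(c))=\sigma(a)\v(c)$ is incorrect as stated (the middle factor must be $S(\sigma(a_2))$, as above; already in the brace case $\sigma=\mathrm{id}$ your version fails). With Lemma 6.2 of \mcite{B} supplied, your computation closes up correctly, so the gap is fillable; but as submitted the key identity is asserted rather than established.
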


\begin{proof}
Since $(H,\chi)$ is a Hopf heap, by Example \mref{exam:hopfheaphopfalg}, $H_x(H)$ is a Hopf algebra with multiplication and antipode
$$a\c_x b=[a,x,b],\ \ S(a)=[x,a,x],$$
for any $a,b\in H$, where $x\in G(H).$

Suppose that the condition (6) holds, and $\s_x(a)=a\circ x$. Then $\s_x$ a coalgebra endomorphism such that $(H,\c_x,\circ,\sigma_x)$ is a Hopf truss: for all $a,b,d\in H$,
\begin{eqnarray*}
 (a_1\circ b)\c_x S(\s_x(a_2))\c_x(a_3\circ d)&=&(a_1\circ b)\c_x S(a_2\circ x)\c_x(a_3\circ  d)\\
 &=&(a_1\circ b)\c_x [x,a_2\circ x,x]\c_x(a_3\circ d)\\
 &=&[a_1\circ b,x,[x,a_2\circ x,x]]\c_x(a_3\circ d)\\
 &=&[a_1\circ b,a_2\circ x,x]\c_x(a_3\circ d)\\
 &=&[[a_1\circ b,a_2\circ x,x],x,a_3\circ d]\\
 &=&[a_1\circ b, a_2\circ x,a_3\circ d]\\
 &=&a\circ [b,x,d]\\
 &=&a\circ (b\c_x d).
\end{eqnarray*}

Conversely, if $(H,\c,\circ,\s)$ is a Hopf truss, we can define
$$[a,b,c]=aS(b)c,
 $$
 for any $a,b,c\in H$, then $H$ is a Hopf heap. Again by Theorem 6.4 in \mcite{B}: $(1)\Longleftrightarrow (5)$, we get
 $$a\circ [b,c,d]=[a_1\circ b, a_2\circ c,a_3\circ d],$$
  for all $a,b,c,d\in H$.
\end{proof}

By Proposition \mref{prop:hopftrussandheap}, we get a equivalent definition of a Hopf truss.

\begin{defn}\mlabel{def:newhopftruss}

  Let $(H,\chi=[-,-,-])$ be a Hopf heap, and $(H,\circ,\D,\v)$ a nonunital bialgebra. We call $H$ is a Hopf truss if
 $$
 a\circ [b,c,d]=[a_1\circ b, a_2\circ c,a_3\circ d],
 $$
for any $a,b,c,d\in H$.
\end{defn}

Now, by Definition \mref{def:newhopftruss}, we can establish the relation between Hopf trusses and Hopf braces.

\begin{prop} (a) Every Hopf truss $(H,\chi,\circ)$ in which $(H,\circ)$ is a Hopf algebra (with unit 1) gives rise to a Hopf brace.

(b) Every Hopf brace $(H,\c,\circ)$ gives rise to a Hopf truss.
\end{prop}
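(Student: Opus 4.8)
The plan is to work entirely through the dictionary between Hopf heaps and Hopf algebras established in Example~\mref{exam:hopfheaphopfalg} and Remark~\mref{rem:HaHp}, together with the equivalent description of a Hopf truss in Definition~\mref{def:newhopftruss}. Recall that a Hopf brace (in the sense of \mcite{AGV}) is a coalgebra $H$ with two Hopf algebra structures $(H,\c,\D,\v)$ and $(H,\circ,\D,\v)$ (the second with unit $1_\circ$ and antipode $T$) satisfying $a\circ(bc)=(a_1\circ b)\,S(a_2)\,(a_3\circ c)$; equivalently, by the remark after Definition~\mref{def:hopftruss}, this is precisely a Hopf truss whose cocycle $\s$ is the identity map.

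For part (a): given a Hopf truss $(H,\chi,\circ)$ with $(H,\circ)$ a genuine Hopf algebra with unit $1_\circ$, I would first fix a group-like element and pass to the Hopf algebra structure. The natural choice is $x:=1_\circ\in G(H)$, which gives the Hopf algebra $H_x(H)=(H,\c_x,\D,\v)$ with $a\c_x b=[a,x,b]$ and antipode $S(a)=[x,a,x]$. The cocycle associated to the truss, as in the proof of Proposition~\mref{prop:hopftrussandheap}, is $\s_x(a)=a\circ x=a\circ 1_\circ$. The key computation is to show $\s_x=\mathrm{id}$: indeed $a\circ 1_\circ$ should equal $\v(a)1_\circ$ only if $1_\circ$ were group-like for $\circ$ in a trivial sense — so more carefully, I expect $a\circ 1_\circ = a$ fails in general and instead one must check that with the choice $x=1_\circ$ the distributivity law $a\circ[b,c,d]=[a_1\circ b,a_2\circ c,a_3\circ d]$ forces the truss relation to collapse to the brace relation. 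Concretely: $a\circ(b\c_x c)=a\circ[b,1_\circ,c]=[a_1\circ b,\,a_2\circ 1_\circ,\,a_3\circ c]=(a_1\circ b)\c_x S(a_2\circ 1_\circ)\c_x(a_3\circ c)$, and using that $1_\circ$ is the unit for $\circ$ one gets $a_2\circ 1_\circ$ behaving like the image of $a_2$ under the identity — so that $\s_x=\mathrm{id}$ and hence $(H_x(H),\circ)$ is a Hopf brace. One then verifies the two Hopf algebra structures share the same counit and comultiplication (automatic, since both come from the coalgebra $H$), completing (a).

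For part (b): given a Hopf brace $(H,\c,\circ)$, apply Example~\mref{exam:hopfheaphopfalg} to the Hopf algebra $(H,\c,\D,\v,S)$ to obtain the Hopf heap $Hp(H,[-,-,-])$ with $[a,b,c]=aS(b)c$. It remains to exhibit a nonunital bialgebra structure and verify the truss distributivity of Definition~\mref{def:newhopftruss}. Take the operation $\circ$ itself: $(H,\circ,\D,\v)$ is a bialgebra (with unit, even). Then I must check $a\circ[b,c,d]=[a_1\circ b,a_2\circ c,a_3\circ d]$, i.e.\ $a\circ(bS(c)d)=(a_1\circ b)\,S(a_2\circ c)\,(a_3\circ d)$. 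Expanding the left side twice via the brace relation $a\circ(xy)=(a_1\circ x)S(a_2)(a_3\circ y)$ and using that $\circ$ is multiplicative on the coalgebra together with the standard brace identity relating $a\circ S(b)$ to $S(a\circ b)$ (which follows from $1\circ$-invertibility and the brace axiom, cf.\ the manipulations in the proof of Lemma~\mref{lem:Hppro}), one obtains the right side. This is exactly the implication $(1)\Longleftrightarrow(5)$ of Theorem~6.4 in \mcite{B} specialized to $\s=\mathrm{id}$, which I may invoke.

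The main obstacle is part (a): one must correctly identify which group-like element to use and verify that the associated cocycle $\s_x$ is genuinely the identity rather than merely a coalgebra endomorphism. The subtlety is that the brace structure's multiplication $\c_x$ must coincide (up to the translation isomorphisms of Lemma~\mref{lem:cohp}) with the original $\c$-type operation hidden in the heap, and showing $a\circ 1_\circ$ plays the role of $a$ under this identification requires careful bookkeeping with the antipode $S(a)=[x,a,x]$ and the heap axioms $[c_1,c_2,a]=[a,c_1,c_2]=\v(c)a$. Once $\s_x=\mathrm{id}$ is established, both parts reduce to the already-proven equivalence of Proposition~\mref{prop:hopftrussandheap} and the remark identifying braces with identity-cocycle trusses.
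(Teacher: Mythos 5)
Your proposal is correct and takes essentially the same route as the paper: for (a) you set $x=1_\circ$ and use the truss distributivity $a\circ[b,1_\circ,c]=[a_1\circ b,\,a_2\circ 1_\circ,\,a_3\circ c]$ together with $a_2\circ 1_\circ=a_2$ to recover the brace axiom in $H_{1_\circ}(H)$, and for (b) you form $[a,b,c]=aS(b)c$ and derive the distributivity from the brace relation plus the standard antipode identity (Lemma~1.7 of \mcite{AGV}, equivalently Theorem~6.4 of \mcite{B}), exactly as the paper does. Your only wobble — doubting whether $a\circ 1_\circ=a$ holds — is unnecessary, since $1_\circ$ is by hypothesis the unit of the Hopf algebra $(H,\circ)$, so the cocycle $\s_{1_\circ}$ is the identity for free.
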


\begin{proof}
(a) By Example \mref{exam:hopfheaphopfalg}, if we define $ab=[a,1,b], ~S(a)=[1,a,1]$, for $a,b\in H$, then, we easily prove that $(H, \c,S)$ is a Hopf algebra with unit 1.

Now we prove that $(H,\c,\circ)$ is a Hopf brace: for any $a,b,c\in H$, we have
\begin{eqnarray*}
a\circ (bc)&=&a\circ [b, 1, c]\\
&=&[a_1\circ b, a_2, a_3\circ c]\\
&=&[[a_1\circ b,1,1], a_2, [1,1,a_3\circ c]]\\
&=&[a_1\circ b,1,[1, a_2, [1,1,a_3\circ c]]]\\
&=&[a_1\circ b,1,[[1, a_2, 1],1,a_3\circ c]]\\
&=&[a_1\circ b,1,[S(a_2),1,a_3\circ c]]\\
&=&[a_1\circ b,1,S(a_2)(a_3\circ c)]\\
&=&(a_1\circ b)S(a_2)(a_3\circ c).
\end{eqnarray*}

(b) Define
$$[a,b,c]=aS(b)c,$$
 for any $a,b,c\in H$. Then by Example \mref{exam:hopfheaphopfalg}, $(H,[-,-,-])$ is a Hopf heap.

Now we prove that $H$ is a Hopf truss by Lemma 1.7 in \mcite{AGV}: for any $a,b,c,d\in H$, we have
\begin{eqnarray*}
a\circ [b,c,d]&=&a\circ (bS(c)d)\\
&=&(a_1\circ b)S(a_2)(a_3\circ(S(c)d))\\
&=&(a_1\circ b)\underbrace{S(a_2)(a_3\circ S(c))}S(a_4)(a_5\circ d)\\
&=&(a_1\circ b)S(a_2\circ c)a_3S(a_4)(a_5\circ d)\\
&=&(a_1\circ b)S(a_2\circ c)(a_3\circ d)\\
&=&[a_1\circ b, a_2\circ c,a_3\circ d],
\end{eqnarray*}
so, by Definition \mref{def:newhopftruss}, $H$ is a Hopf truss.
\end{proof}

In what follows, we construct a new Hopf truss by a Hopf truss.

\begin{prop}

 Assume $(H,\c,\circ,\s)$ is a Hopf truss. Then, for any $x\in G(H)$, we know that $(H,\c_x,\circ,\s_x)$ is a Hopf truss, whose new multiplication $``\c_x"$ and new operator $``\s_x"$ on $H$ are given by
 $$a\c_x b=aS(x)b,\ \ \s_x(a)=a\circ x,$$
for any $a,b\in H.$
\end{prop}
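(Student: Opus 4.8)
The plan is to reduce the statement to the Hopf-heap reformulation of a Hopf truss, so that it becomes essentially a corollary of (the proof of) Proposition~\mref{prop:hopftrussandheap}. First, since $(H,\c,\circ,\s)$ is a Hopf truss, Proposition~\mref{prop:hopftrussandheap} says that $H$ becomes a Hopf heap under $[a,b,c]:=aS(b)c$ and that
$$a\circ[b,c,d]=[a_1\circ b,a_2\circ c,a_3\circ d]$$
for all $a,b,c,d\in H$, where $S$ is the antipode of the Hopf algebra $(H,\c)$. This identity --- call it $(\ast)$ --- is the only input we need from the Hopf truss $(H,\c,\circ,\s)$, and, crucially, it refers solely to the Hopf heap $[-,-,-]$ and the nonunital bialgebra $(H,\circ,\D,\v)$, not to the unit of $\c$ nor to $\s$.

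Next I would fix $x\in G(H)$ and identify the relevant structures. By Example~\mref{exam:hopfheaphopfalg}, the coalgebra $H$ carries the Hopf algebra structure $H_x(H)$ with product $a\c_x b=[a,x,b]$, unit $x$, and antipode $S_x(a)=[x,a,x]$; since $[a,x,b]=aS(x)b$, this $\c_x$ is precisely the proposed new multiplication. The nonunital bialgebra $(H,\circ,\D,\v)$ is unchanged. Finally $\s_x(a)=a\circ x$ is a coalgebra endomorphism of $(H,\D,\v)$: as $\circ$ is a coalgebra map and $x$ is group-like, $\D(a\circ x)=(a_1\circ x)\o(a_2\circ x)$ and $\v(a\circ x)=\v(a)\v(x)=\v(a)$.

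Now apply the ``$(\ast)\Rightarrow$ Hopf truss'' direction of Proposition~\mref{prop:hopftrussandheap} to the Hopf heap $[-,-,-]$ together with the chosen group-like element $x$. Exactly the chain of equalities in that proof gives, for all $a,b,d\in H$,
$$(a_1\circ b)\c_x S_x(\s_x(a_2))\c_x(a_3\circ d)=[a_1\circ b,\,a_2\circ x,\,a_3\circ d]=a\circ[b,x,d]=a\circ(b\c_x d),$$
which is the truss distributive law for $(H,\c_x,\circ,\s_x)$ with cocycle $\s_x$. Combined with the observations that $(H,\c_x,\D,\v)$ is a Hopf algebra, $(H,\circ,\D,\v)$ a nonunital bialgebra, and $\s_x$ a coalgebra endomorphism, this proves $(H,\c_x,\circ,\s_x)$ is a Hopf truss. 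There is no real obstacle; the only point requiring care is bookkeeping of antipodes --- $a\c_x b=aS(x)b$ uses the antipode $S$ of the \emph{original} Hopf algebra $(H,\c)$, while the distributive law for $\c_x$ must be written with the new antipode $S_x=[x,-,x]$ of $H_x(H)$ --- together with the observation $\c_x=[-,x,-]$ that lets identity $(\ast)$ be used verbatim with middle slot $x$.
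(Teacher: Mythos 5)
Your proposal is correct and follows essentially the same route as the paper: the paper also identifies $\c_x=[-,x,-]$ with the product of $H_x(H)$ for the Hopf heap $[a,b,c]=aS(b)c$, extracts the identity $a\circ[b,c,d]=[a_1\circ b,a_2\circ c,a_3\circ d]$ from Proposition~\mref{prop:hopftrussandheap}, and then specializes the middle slot to $x$ to obtain the truss distributive law with cocycle $\s_x(a)=a\circ x$. Your explicit care about which antipode ($S$ versus $S_x$) appears in the distributive law is a point the paper glosses over, but by Remark~\mref{rem:HaHp} the two readings of $[u,v,w]$ agree, so both arguments are sound.
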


\begin{proof}
Since $(H,\c,\D,\v,S)$ is a Hopf algebra, $(H,[-,-,-])$ is a Hopf heap with $[a,b,c]=aS(b)c$, for $a,b,c\in H$. Hence $a\c_x b=aS(x)b=[a,x,b]$, such that $(H,\c_x)=H_x(H,[-,-,-])$ is a Hopf algebra, and by Proposition \mref{prop:hopftrussandheap} we have
\begin{eqnarray*}
a\circ (b\c_x c)&=&a\circ[b,x,c]\\
&=&[a_1\circ b,a_2\circ x,a_3\circ c]\\
&=&[a_1\circ b,\s_x(a_2),a_3\circ c]\\
&=&(a_1\circ b)S(\s_x(a_2))(a_3\circ c).
\end{eqnarray*}

 Hence $(H,\c_e,\circ)$ is a Hopf truss.

\end{proof}

In the following, we find that the cocycle in a Hopf truss $(H,\c,\circ,\s)$ for its unit $1_{\c}$ being in the central $Z(H,\circ)$ is a Hopf heap endomorphism.

\begin{prop} Let $(H,\c,\circ,\s)$ be a Hopf truss such that its unit $1_{\c}$ is in the central $Z(H,\circ)$. Then $\s$ is an endomorphism of the induced Hopf heap $(H,[-,-,-])$.
\end{prop}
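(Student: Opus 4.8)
The plan is to rewrite both the cocycle $\s$ and the induced heap bracket $[a,b,c]=aS(b)c$ in terms of the operation $\circ$ and the group-like unit $1_{\c}$ of the Hopf algebra $(H,\c)$, and then to invoke the equivalent form of the truss axiom from Definition \mref{def:newhopftruss}. Since $\s$ is already a coalgebra endomorphism of $(H,\D,\v)$ by Definition \mref{def:hopftruss}, it suffices to check that $\s([a,b,c])=[\s(a),\s(b),\s(c)]$ for all $a,b,c\in H$.

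The crucial step is the identity
$$
\s(a)=a\circ 1_{\c}\qquad(a\in H),
$$
the Hopf-algebraic analogue of the classical fact that the cocycle of a skew left truss is $a\mapsto a\circ e$. To prove it, put $b=1_{\c}$ in the defining identity of a Hopf truss; using that $1_{\c}$ is a group-like unit this becomes
$$
a\circ c=(a_1\circ 1_{\c})\,S(\s(a_2))\,(a_3\circ c)\qquad(a,c\in H).
$$
Now fix any $g\in G(H)$ (nonempty by our standing assumption) and take $c=g$. Since $g$ is group-like and $\circ$ is a coalgebra morphism, $\rho_g\colon a\mapsto a\circ g$ is a coalgebra endomorphism of $H$ with $\v\circ\rho_g=\v$, hence convolution-invertible in the convolution algebra $\mathrm{Hom}\bigl(H,(H,\c)\bigr)$ with inverse $S\circ\rho_g$; likewise $\s$ is convolution-invertible with inverse $S\circ\s$. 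Writing $\mu:=(-)\circ 1_{\c}$, the last displayed identity reads $\rho_g=\bigl(\mu\star(S\circ\s)\bigr)\star\rho_g$ in this convolution algebra, and cancelling $\rho_g$ on the right gives $\mu\star(S\circ\s)=\v(-)\,1_{\c}$; comparing with $\s\star(S\circ\s)=\v(-)\,1_{\c}$ forces $\mu=\s$.

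Granting this, the proposition is a short computation. For $a,b,c\in H$, applying in turn $\s=(-)\circ 1_{\c}$, the hypothesis $1_{\c}\in Z(H,\circ)$, the identity of Definition \mref{def:newhopftruss} with first entry $1_{\c}$ (together with $\D(1_{\c})=1_{\c}\o 1_{\c}$), and centrality once more,
\begin{eqnarray*}
\s([a,b,c])&=&[a,b,c]\circ 1_{\c}=1_{\c}\circ[a,b,c]\\
&=&[\,1_{\c}\circ a,\ 1_{\c}\circ b,\ 1_{\c}\circ c\,]\\
&=&[\,a\circ 1_{\c},\ b\circ 1_{\c},\ c\circ 1_{\c}\,]=[\s(a),\s(b),\s(c)].
\end{eqnarray*}
Together with $\s$ being a coalgebra endomorphism, this is exactly the assertion that $\s$ is an endomorphism of the induced Hopf heap $(H,[-,-,-])$.

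The main obstacle is the lemma $\s=(-)\circ 1_{\c}$: it is the only place where a group-like element is needed (to make $\rho_g$ convolution-invertible), and one must be careful to form the convolution with respect to $\D$ and the product $\c$ of $(H,\c)$, so that $S\circ\s$ and $S\circ\rho_g$ genuinely are the convolution inverses. If this identity is already recorded for Hopf trusses in \mcite{B}, the step may simply be quoted; otherwise it is the heart of the argument. Everything after it — the two applications of centrality and the single use of Definition \mref{def:newhopftruss} — is routine.
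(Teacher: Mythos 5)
Your proof is correct and, in its main computation, identical to the paper's: both reduce the claim to the identity $\s(a)=a\circ 1_{\c}$ and then apply centrality of $1_{\c}$, the reformulated truss axiom $1_{\c}\circ[a,b,c]=[1_{\c}\circ a,1_{\c}\circ b,1_{\c}\circ c]$ (using $\D(1_{\c})=1_{\c}\o 1_{\c}$), and centrality again. The only difference is that the paper simply quotes $\s(a)=a\circ 1_{\c}$ from Lemma 6.2 of \mcite{B}, whereas you prove it by setting $b=1_{\c}$ in the truss axiom and cancelling in the convolution algebra of $\mathrm{Hom}(H,(H,\c))$; your argument is sound (and you could take $g=1_{\c}$ itself as the group-like element, since the unit of $(H,\c)$ is always group-like), so it makes the proof self-contained at the cost of a short standard convolution-invertibility argument.
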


\begin{proof}By Lemma 6.2 in \mcite{B}, we know that $\s(a)=a\circ 1_{\c}$, so, for all $a,b,c\in H$,
\begin{eqnarray*}
\s([a,b,c])&=&[a,b,c]\circ 1_{\c}\\
&=&1_{\c}\circ [a,b,c]\\
&=&[1_{\c}\circ a,1_{\c}\circ b,1_{\c}\circ c]\\
&=&[a \circ 1_{\c},b \circ 1_{\c},c \circ 1_{\c}]\\
&=&[\s(a),\s(b),\s(c)],\\
\D(\s(a))&=&\D(a\circ 1_{\c})=a_1\circ 1_{\c}\o a_2\circ 1_{\c}\\
&=&\s(a_1)\o \s(a_2).
\end{eqnarray*}
\end{proof}

In what follows, we present a number of examples of Hopf trusses arising from Hopf heaps.

\begin{lemma} Let $(H,[-,-,-])$ be a Hopf heap, and $x\in G(H)$. Define a multiplication
$$a\c_x b=\varepsilon(a)\varepsilon(b)x,\ \ a,b\in H.
$$

 Then $(H,\c_x,[-,-,-])$ is a Hopf truss.
\end{lemma}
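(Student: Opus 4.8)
The plan is to verify directly the two conditions of Definition~\mref{def:newhopftruss} for the operation $\circ:=\c_x$: namely that $(H,\c_x,\D,\v)$ is a nonunital bialgebra, and that the truss distributivity $a\c_x[b,c,d]=[a_1\c_x b,a_2\c_x c,a_3\c_x d]$ holds for all $a,b,c,d\in H$. The only properties of $x$ that enter are its group-likeness, $\D(x)=x\o x$ and $\v(x)=1$, together with the coalgebra-map property of $\chi$ and the Hopf heap axioms of Definition~\mref{def:hopfheap}.

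First I would establish the bialgebra structure. Associativity of $\c_x$ is immediate: both $(a\c_x b)\c_x c$ and $a\c_x(b\c_x c)$ reduce to $\v(a)\v(b)\v(c)\,x$ once one uses $\v(x)=1$. Counitality reads $\v(a\c_x b)=\v(a)\v(b)\v(x)=\v(a)\v(b)$. For multiplicativity of $\D$, on one hand $\D(a\c_x b)=\v(a)\v(b)\,\D(x)=\v(a)\v(b)\,(x\o x)$, while on the other $a_1\c_x b_1\o a_2\c_x b_2=\bigl(\sum\v(a_1)\v(a_2)\bigr)\bigl(\sum\v(b_1)\v(b_2)\bigr)(x\o x)=\v(a)\v(b)\,(x\o x)$ by the counit axiom; so the two coincide. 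Note that $\c_x$ is in general nonunital, which is precisely what Definition~\mref{def:newhopftruss} permits.

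Next I would check the distributivity. Because $\chi$ is a coalgebra map (and the counit of $C^{co}$ is again $\v$), we have $\v([b,c,d])=\v(b)\v(c)\v(d)$, so the left-hand side equals $\v(a)\v(b)\v(c)\v(d)\,x$. For the right-hand side, the linearity of $\chi$ in each argument gives $[a_1\c_x b,a_2\c_x c,a_3\c_x d]=\bigl(\sum\v(a_1)\v(a_2)\v(a_3)\bigr)\v(b)\v(c)\v(d)\,[x,x,x]=\v(a)\v(b)\v(c)\v(d)\,[x,x,x]$, again by counitality. Finally $[x,x,x]=x$: since $x$ is group-like, the Hopf heap identity $[c_1,c_2,a]=\v(c)a$ of Definition~\mref{def:hopfheap} applied with $c=x$ gives $[x,x,x]=\v(x)x=x$. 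Hence both sides equal $\v(a)\v(b)\v(c)\v(d)\,x$, and $(H,\c_x,[-,-,-])$ is a Hopf truss.

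I do not expect any genuine obstacle: the statement is a routine unwinding of definitions. The one point requiring mild care is the repeated use of the group-likeness of $x$ (to collapse $\D(x)$ and $\v(x)$) and of the coalgebra-map property of $\chi$ (to compute $\v\circ\chi$ on $C\o C^{co}\o C$); once these are in hand, every identity becomes a two-line computation with the counit axiom.
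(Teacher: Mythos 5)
Your proposal is correct and follows exactly the route the paper intends: the paper's own proof merely asserts that $(H,\c_x,\D)$ is a bialgebra and that the distributivity identity holds, and your computations (reducing both sides of each identity to a multiple of $x$ via the counit axiom, the coalgebra-map property of $\chi$, and $[x,x,x]=\v(x)x=x$) are precisely the omitted details. No gaps.
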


\begin{proof}
It is obvious that $(H,\c_x,\D)$ is a bialgebra. Moreover we have
 $$a\c_x [b,c,d]=[a_1\c_x b, a_2\c_x c,a_3\c_x d],$$
  for all $a,b,c,d\in H$. So $(H,\c_x,[-,-,-])$ is a Hopf truss by Definition \mref{def:newhopftruss}.
\end{proof}

\begin{lemma} \mlabel{lem:constructhopftruss} Let $(H,[-,-,-])$ be a commutative and cocommutative Hopf heap, and $\a$ an idempotent Hopf heap endomorphism of $H$. Define two binary operations as follows:
$$
x\c_{\a} y=[x_1,\a(x_2),y],$$
$$x\c'_{\a} y=[x,\a(y_1),y_2],
$$
for all $x,y\in H$. Then $(H,[-,-,-],\c_{\a})$ and $(H,[-,-,-],\c'_{\a})$ are Hopf trusses.
\end{lemma}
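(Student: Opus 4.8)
The plan is to push the whole statement into a single Hopf algebra attached to $H$, where $\c_{\alpha}$ and $\c'_{\alpha}$ become twisted products by idempotent Hopf algebra endomorphisms, and then verify the truss axioms mechanically there. First I would use idempotency of $\alpha$ to produce a group-like element that $\alpha$ fixes: choose any $e_{0}\in G(H)$ (nonempty by our standing assumption) and put $e=\alpha(e_{0})$; then $e\in G(H)$ since $\alpha$ is a coalgebra map, and $\alpha(e)=\alpha(\alpha(e_{0}))=\alpha(e_{0})=e$ by idempotency. By Proposition~\ref{prop:hahphomomo} (applied with $H=G$ and $x=x'=e$), $\alpha$ is then an idempotent Hopf algebra endomorphism of the Hopf algebra $H_{e}(H)$ of Example~\ref{exam:hopfheaphopfalg}; since $H$ is commutative and cocommutative as a Hopf heap, $H_{e}(H)$ is a commutative cocommutative Hopf algebra (so its antipode $S$ satisfies $S^{2}=\mathrm{id}$, and $\alpha(1)=1$ where $1=e$), and by Remark~\ref{rem:HaHp}(1) its induced heap operation is the given one, i.e. $[a,b,c]=aS(b)c$ inside $H_{e}(H)$. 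From now on all computations take place in $H_{e}(H)$, with products written by juxtaposition.

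Next I would introduce $\phi,\psi:H_{e}(H)\to H_{e}(H)$, $\phi(x)=x_{1}S(\alpha(x_{2}))$ and $\psi(y)=S(\alpha(y_{1}))y_{2}$, so that $x\c_{\alpha}y=[x_{1},\alpha(x_{2}),y]=\phi(x)\,y$ and $x\c'_{\alpha}y=[x,\alpha(y_{1}),y_{2}]=x\,\psi(y)$. The heart of the proof is to show that $\phi$ and $\psi$ are \emph{idempotent Hopf algebra endomorphisms} of $H_{e}(H)$. That they are coalgebra maps is formal: $\phi=m\circ(\mathrm{id}\otimes S\alpha)\circ\D$ and $\psi=m\circ(S\alpha\otimes\mathrm{id})\circ\D$ are composites of coalgebra maps (cocommutativity is used here so that $S$ is a coalgebra map). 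That they are algebra maps uses commutativity of $H_{e}(H)$ together with $\alpha$ being an algebra map, e.g. $\phi(xy)=x_{1}y_{1}S(\alpha(y_{2}))S(\alpha(x_{2}))=\phi(x)\phi(y)$ after commuting the middle factors, and $\phi(1)=1$. Idempotency is the one place where $\alpha^{2}=\alpha$ is genuinely needed: expanding $\phi(\phi(x))=\phi(x_{1})\,\phi(S(\alpha(x_{2})))$ using $\alpha S=S\alpha$, $S^{2}=\mathrm{id}$ and $\alpha^{2}=\alpha$ gives $x_{1}S(\alpha(x_{2}))\,S(\alpha(x_{3}))\alpha(x_{4})$, whose tail $S(\alpha(x_{3}))\alpha(x_{4})=\alpha\big(S(x_{3})x_{4}\big)=\varepsilon(x_{3})1$ collapses it back to $\phi(x)$; the argument for $\psi$ is symmetric. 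I would also record the one-line identity $\psi S=S\psi$, which follows from $\alpha S=S\alpha$ and cocommutativity.

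With $\phi$ and $\psi$ in hand the rest is short. Associativity: $(x\c_{\alpha}y)\c_{\alpha}z=\phi(\phi(x)y)z=\phi(x)\phi(y)z=\phi(x)(y\c_{\alpha}z)=x\c_{\alpha}(y\c_{\alpha}z)$ using that $\phi$ is idempotent and multiplicative, and symmetrically $(x\c'_{\alpha}y)\c'_{\alpha}z=x\psi(y)\psi(z)=x\c'_{\alpha}(y\c'_{\alpha}z)$. That $(H,\c_{\alpha},\D,\varepsilon)$ and $(H,\c'_{\alpha},\D,\varepsilon)$ are nonunital bialgebras follows from $\D,\varepsilon$ being algebra maps of $H_{e}(H)$ and $\phi,\psi$ being coalgebra maps: $\D(x\c_{\alpha}y)=(\phi(x_{1})\otimes\phi(x_{2}))(y_{1}\otimes y_{2})=(x_{1}\c_{\alpha}y_{1})\otimes(x_{2}\c_{\alpha}y_{2})$, $\varepsilon(x\c_{\alpha}y)=\varepsilon(x)\varepsilon(y)$, and likewise with primes. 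For the truss distributivity of Definition~\ref{def:newhopftruss}: since $\phi$ is a coalgebra map and $[P,Q,R]=PS(Q)R$, one gets $[a_{1}\c_{\alpha}b,a_{2}\c_{\alpha}c,a_{3}\c_{\alpha}d]=[\phi(a)_{1}b,\phi(a)_{2}c,\phi(a)_{3}d]$, which after expanding and commuting factors equals $\big(\phi(a)_{1}S(\phi(a)_{2})\phi(a)_{3}\big)\,bS(c)d=\phi(a)\,[b,c,d]=a\c_{\alpha}[b,c,d]$, where the identity $y_{1}S(y_{2})y_{3}=y$ is used; for $\c'_{\alpha}$ one uses in addition $\psi S=S\psi$, so that $\psi([b,c,d])=[\psi(b),\psi(c),\psi(d)]$, and obtains $[a_{1}\c'_{\alpha}b,a_{2}\c'_{\alpha}c,a_{3}\c'_{\alpha}d]=\big(a_{1}S(a_{2})a_{3}\big)\psi(b)S(\psi(c))\psi(d)=a\,\psi([b,c,d])=a\c'_{\alpha}[b,c,d]$. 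By Definition~\ref{def:newhopftruss}, $(H,[-,-,-],\c_{\alpha})$ and $(H,[-,-,-],\c'_{\alpha})$ are therefore Hopf trusses.

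The main obstacle is the reduction step: recognizing that one must pass to $H_{e}(H)$ for the \emph{particular} group-like $e=\alpha(e_{0})$ (so that $\alpha$ becomes a genuine Hopf algebra endomorphism there) and then carrying out the idempotency verification for $\phi$ and $\psi$, which is the only point where the hypothesis $\alpha^{2}=\alpha$ is essential. Everything after that is routine Sweedler-notation bookkeeping in a commutative cocommutative Hopf algebra.
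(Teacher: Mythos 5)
Your proposal is correct, and it follows a genuinely different route from the paper's. The paper stays at the level of the heap: associativity of $\c_{\a}$ is checked by expanding $\D([x_1,\a(x_2),y])$ via the comultiplicativity of $\chi$, using that $\a$ is an idempotent heap endomorphism together with cocommutativity to collapse the middle entry through $[c_1,c_2,a]=\v(c)a$, and the truss distributivity is then deduced from the interchange identity of Lemma~\ref{lem:Hppro}. You instead transport everything into the single Hopf algebra $H_e(H)$ for the specially chosen base point $e=\a(e_0)$ --- the observation that idempotency forces $\a(e)=e$, so that $\a$ becomes a genuine Hopf algebra endomorphism of $H_e(H)$ via Proposition~\ref{prop:hahphomomo}, is an extra idea the paper never needs --- and then factor the two products as $x\c_{\a}y=\phi(x)y$ and $x\c'_{\a}y=x\psi(y)$ with $\phi=m(\mathrm{id}\o S\a)\D$ and $\psi=m(S\a\o\mathrm{id})\D$. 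I checked the key claims: $\phi,\psi$ are indeed coalgebra and algebra maps (using cocommutativity and commutativity respectively), the idempotency computation $\phi(\phi(x))=x_1S(\a(x_2))\,S(\a(x_3))\a(x_4)=\phi(x)$ is where $\a^2=\a$ enters (exactly as in the paper's associativity step), and the identities $y_1S(y_2)y_3=y$ and $\psi S=S\psi$ give the distributivity for $\c_{\a}$ and $\c'_{\a}$. What your approach buys is a structural explanation --- the two twisted products are left/right multiplications by images of idempotent Hopf algebra endomorphisms, which makes associativity, the bialgebra property, and distributivity essentially one-line consequences; what the paper's buys is that the associativity argument remains intrinsic to the heap and reuses its general-purpose Lemma~\ref{lem:Hppro} rather than a bespoke choice of group-like.
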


\begin{proof}
Firstly, we prove that the associativity of multiplication $``\c_{\a}"$ is satisfied: for all $x,y,z\in H$,
\begin{eqnarray*}
(x\c_{\a} y)\c_{\a} z&=&[x_1,\a(x_2),y]\c_{\a} z\\
&=&[[x_1,\a(x_2),y]_1,\a([x_1,\a(x_2),y]_2),z]\\
&=&[[x_{1},\a(x_{4}),y_1],[\a(x_{2}),\a(x_{3}),\a(y_2)],z]\\
&=&[[x_{1},\a(x_{3}),y_1],[\a(x_{2})_1,\a(x_{2})_2,\a(y_2)],z]\\
&=&[[x_{1},\a(x_{2}),y_1],\a(y_2),z]\\
&=&[x_{1},\a(x_{2}),[y_1,\a(y_2),z]]\\
&=&x\c_{\a} (y\c_{\a} z).
\end{eqnarray*}

Secondly, we prove that $\D$ is an algebra map:
\begin{eqnarray*}
\D(x\c_{\a} y)&=&\D([x_1,\a(x_2),y])\\
&=&[x_1,\a(x_2),y]_1\o [x_1,\a(x_2),y]_2\\
&=&[x_1,\a(x_4),y_1]\o [x_2,\a(x_3),y_2]\\
&=&[x_1,\a(x_2),y_1]\o [x_3,\a(x_4),y_2]\\
&=&x_1 \c_{\a} y_1\o x_2 \c_{\a} y_2.
\end{eqnarray*}

Finally, by Lemma \mref{lem:Hppro}, we can prove: for all $w,x,y,z\in H$,
\begin{eqnarray*}
[w_1\c_{\a} x, w_2\c_{\a} y, w_3\c_{\a} z]&=&[[w_{11},\a(w_{12}),x],[w_{21},\a(w_{22}),y],[w_{31},\a(w_{32}),z]]\\
&=&[[w_{11},w_{21},w_{31}],[\a(w_{12}),\a(w_{22}),\a(w_{32})],[x,y,z]]\\
&=&[[w_{1},w_{2},w_{3}],[\a(w_{4}),\a(w_{5}),\a(w_{6})],[x,y,z]]\\
&=&[w_{1},\a(w_{2}),[x,y,z]]\\
&=&w\c_{\a}[x,y,z].
\end{eqnarray*}

Thus, according to Proposition \mref{prop:hopftrussandheap}, $(H,[-,-,-],\c_{\a})$ is a Hopf truss.

Similarly, we can prove that $(H,\chi,\c'_{\a})$ is also a Hopf truss.
\end{proof}

\begin{coro} Any a commutative and cocommutative Hopf heap $(H,[-,-,-])$ with either of multiplications:
$$
xy=\v(x)y\ ; \ \ xy=\v(y)x, x,y\in H,
$$
is a Hopf truss.
\end{coro}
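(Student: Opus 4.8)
The plan is to derive this corollary directly from Lemma \mref{lem:constructhopftruss} by specializing the idempotent Hopf heap endomorphism $\a$ to the simplest possible choices. First I would observe that the identity map $\mathrm{id}_H$ is trivially an idempotent Hopf heap endomorphism of $H$, and that the ``constant'' map sending $h \mapsto \v(h)x$ for a fixed $x \in G(H)$ is also an idempotent Hopf heap endomorphism: it is a coalgebra map because $x$ is group-like, it is a Hopf heap map because $[\v(a)x, \v(b)x, \v(c)x] = \v(a)\v(b)\v(c)[x,x,x] = \v(a)\v(b)\v(c)x$ (using $[x,x,x]=x$ for $x$ group-like, which follows from axiom (2) in Definition \mref{def:hopfheap}), and it is clearly idempotent. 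Actually, since the two multiplications in the statement do not mention $x$ at all, the cleanest route is to take $\a = \mathrm{id}_H$ in both formulas of Lemma \mref{lem:constructhopftruss}.

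The key computation is then just unwinding the definitions of $\c_\a$ and $\c'_\a$ when $\a = \mathrm{id}$. For $\c_{\mathrm{id}}$ we get
$$
x \c_{\mathrm{id}} y = [x_1, x_2, y] = \v(x)\, y,
$$
using the Hopf heap axiom $[c_1,c_2,a] = \v(c)a$ from Definition \mref{def:hopfheap}. Symmetrically, for $\c'_{\mathrm{id}}$ we get
$$
x \c'_{\mathrm{id}} y = [x, y_1, y_2] = \v(y)\, x,
$$
using the other half of the same axiom, $[a,c_1,c_2] = \v(c)a$. So the two multiplications named in the corollary are exactly $\c_{\mathrm{id}}$ and $\c'_{\mathrm{id}}$, and Lemma \mref{lem:constructhopftruss} (whose hypotheses — commutative, cocommutative, $\a$ idempotent Hopf heap endomorphism — are all met with $\a = \mathrm{id}$) immediately gives that $(H, [-,-,-], xy = \v(x)y)$ and $(H, [-,-,-], xy = \v(y)x)$ are Hopf trusses.

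I do not expect any genuine obstacle here; the only point requiring a word of care is verifying that $(H, \circ, \D, \v)$ is a nonunital bialgebra for these two multiplications — i.e. that $\circ$ is associative and $\D$ is multiplicative — but this is immediate (for $xy = \v(x)y$: $(xy)z = \v(x)\v(y)z = x(yz)$, and $\D(xy) = \v(x)\D(y) = \v(x_1)y_1 \o \v(x_2)y_2 = (x_1 y_1)\o(x_2 y_2)$), and in any case it is already subsumed in the proof of Lemma \mref{lem:constructhopftruss} once $\a = \mathrm{id}$ is substituted. Thus the proof reduces to two one-line identity checks plus an appeal to the lemma.
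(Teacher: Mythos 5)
Your proposal is correct and coincides exactly with the paper's proof, which simply sets $\a=\mathrm{id}$ in Lemma \mref{lem:constructhopftruss} and uses the heap axiom $[c_1,c_2,a]=\v(c)a=[a,c_1,c_2]$ to identify $\c_{\mathrm{id}}$ and $\c'_{\mathrm{id}}$ with the two stated multiplications. The additional checks you include (idempotency of $\mathrm{id}$, associativity, multiplicativity of $\D$) are harmless but already subsumed in the lemma.
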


\begin{proof}
It is straightforward by setting $\a=$ id in Lemma \mref{lem:constructhopftruss}.
\end{proof}

\begin{coro} Let $(H,[-,-,-])$ be a commutative and cocommutative Hopf heap, and $x\in G(H)$. Then $H$ together with the multiplication:
$$
ab=[a,x,b], \ \ a,b\in H,
$$
is a Hopf truss.
\end{coro}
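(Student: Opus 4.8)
The plan is to reduce the final corollary to the previous one, Lemma \ref{lem:constructhopftruss}, by recognizing the multiplication $ab = [a,x,b]$ as a special case of $\c_\a$ for a judicious choice of the idempotent Hopf heap endomorphism $\a$. First I would observe that $x \in G(H)$ means $\D(x) = x \o x$ and $\v(x) = 1$, so the constant map $\a \colon H \to H$, $a \mapsto \v(a)x$, is a coalgebra map: indeed $\D(\a(a)) = \v(a)(x \o x) = \a(a_1) \o \a(a_2)$ and $\v(\a(a)) = \v(a)\v(x) = \v(a)$. It is idempotent since $\a(\a(a)) = \v(\a(a))x = \v(a)\v(x)x = \v(a)x = \a(a)$. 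The remaining point is that $\a$ is a Hopf heap endomorphism, i.e. $\a([a,b,c]) = [\a(a),\a(b),\a(c)]$; since $\chi$ is a coalgebra map we have $\v([a,b,c]) = \v(a)\v(b)\v(c)$, so the left side is $\v(a)\v(b)\v(c)\,x$, while the right side is $[\v(a)x, \v(b)x, \v(c)x] = \v(a)\v(b)\v(c)[x,x,x] = \v(a)\v(b)\v(c)\,x$, using $[x,x,x] = \v(x)x = x$ from axiom (2) of Definition \ref{def:hopfheap}. So $\a$ is an idempotent Hopf heap endomorphism.

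Next I would compute $\c_\a$ for this $\a$: for $a,b \in H$,
\begin{eqnarray*}
a \c_\a b = [a_1, \a(a_2), b] = [a_1, \v(a_2)x, b] = [a_1\v(a_2), x, b] = [a, x, b],
\end{eqnarray*}
using counitality of $\D$ in the last step. Hence the multiplication $ab = [a,x,b]$ of the corollary is exactly $\c_\a$, and Lemma \ref{lem:constructhopftruss} — whose hypotheses (commutative and cocommutative Hopf heap, idempotent Hopf heap endomorphism $\a$) are met — immediately gives that $(H, [-,-,-], \c_\a)$ is a Hopf truss. One should also note, if desired, that $\c_\a = \c_x$ is precisely the Hopf algebra multiplication of $H_x(H)$ from Example \ref{exam:hopfheaphopfalg}, so the corollary additionally says that $H_x(H)$, viewed together with the heap structure, is a Hopf truss.

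Alternatively, and perhaps more transparently, one can prove it directly via Proposition \ref{prop:hopftrussandheap} / Definition \ref{def:newhopftruss}: check that $(H, \c_x, \D, \v)$ is a bialgebra (this is Example \ref{exam:hopfheaphopfalg}, where $H_x(H)$ is even a Hopf algebra, hence a nonunital bialgebra), and then verify
$$
a \circ [b,c,d] = [a_1 \circ b, a_2 \circ c, a_3 \circ d]
$$
with $\circ = \c_x$, i.e. $[a, x, [b,c,d]] = [[a_1, x, b], [a_2, x, c], [a_3, x, d]]$. The right-hand side, by Lemma \ref{lem:Hppro} (commutativity is used here), equals $[[a_1,a_2,a_3], [x,x,x], [b,c,d]] = [a, x, [b,c,d]]$ since $[a_1,a_2,a_3] = \v(a_2)\ldots$ — more precisely $[a_1, a_2, a_3] = [a, a_2, a_3]$... here I would just use $[c_1, c_2, a] = \v(c)a$ from axiom (2) after regrouping, or cocommutativity, to collapse $[a_1,a_2,a_3]$ to $a$ — together with $[x,x,x] = x$. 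I expect the only mild obstacle to be bookkeeping the Sweedler indices correctly when invoking Lemma \ref{lem:Hppro} and collapsing $[a_1,a_2,a_3]$; both routes are short, and I would present the first (via Lemma \ref{lem:constructhopftruss} with $\a = \v(-)x$) as the cleanest, since it makes the hypotheses of that lemma do all the work.
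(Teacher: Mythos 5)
Your primary argument is exactly the paper's proof: the paper also obtains the corollary by setting $\a(a)=\v(a)x$ in Lemma \ref{lem:constructhopftruss}, and you have correctly verified the details (that this $\a$ is an idempotent Hopf heap endomorphism and that $\c_\a$ collapses to $[a,x,b]$) that the paper leaves as "straightforward." The proposal is correct and follows the same route.
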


\begin{proof}
It is straightforward by setting $\a(a)=\varepsilon(a)x$ in Lemma \mref{lem:constructhopftruss}, for all $a\in H$.
\end{proof}

In the following, we find that the set of all endomorphisms of a commutative and cocommutative Hopf heap has a Hopf truss structure.

\begin{prop}
Let $(H,[-,-,-])$ be a commutative and cocommutative Hopf heap. Then the set $E(H)$ of all endomorphisms of commutative Hopf heap $H$ has a Hopf truss structure, whose multiplication $``\circ"$ is the composition map, comultiplication  defined by $\widetilde{\D}(\a)=\a\o \a$, and heap operation $``\{-,-,-\}"$ given by
$$
\{\a,\b,\g\}:H\rightarrow H, ~x\mapsto [\a(x_1),\b(x_2),\g(x_3)],
$$
for all $\a,\b,\gamma\in E(H)$.
\end{prop}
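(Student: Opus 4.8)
The plan is to view $E(H)$ as the $\bfk$-coalgebra spanned by the Hopf heap endomorphisms of $H$, with each such endomorphism declared group-like; thus $\widetilde{\D}(\a)=\a\o\a$ and (the counit we must also supply) $\widetilde{\v}(\a)=1$ on the spanning set. Note the ternary map $\widetilde{\chi}\colon\a\o\b\o\g\mapsto\{\a,\b,\g\}$, $x\mapsto[\a(x_1),\b(x_2),\g(x_3)]$, is automatically $\bfk$-trilinear, so the whole proof reduces to checking, in order: (i) that $\{\a,\b,\g\}$ is again a Hopf heap endomorphism of $H$, so that $\widetilde\chi$ takes values in $E(H)$; (ii) the two heap axioms for $(E(H),\widetilde\chi,\widetilde\D,\widetilde\v)$; (iii) that $(E(H),\circ,\widetilde\D,\widetilde\v)$ is a (nonunital) bialgebra; and (iv) the truss distributivity law of Definition~\mref{def:newhopftruss}.

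First I would prove (i). Compatibility of $\{\a,\b,\g\}$ with $\v$ is immediate from $\v([a,b,c])=\v(a)\v(b)\v(c)$. For compatibility with $\D$, one expands $\D(\{\a,\b,\g\}(x))$ using that $\chi$ and $\a,\b,\g$ are coalgebra maps, obtaining $[\a(x_1),\b(x_3),\g(x_5)]\o[\a(x_2),\b(x_4),\g(x_6)]$, and compares with $\{\a,\b,\g\}(x_1)\o\{\a,\b,\g\}(x_2)=[\a(x_1),\b(x_2),\g(x_3)]\o[\a(x_4),\b(x_5),\g(x_6)]$; the two iterated comultiplications of $x$ agree after permuting tensor legs, which is precisely where \emph{cocommutativity} of $H$ enters. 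Compatibility with $\chi$, namely $\{\a,\b,\g\}([x,y,z])=[\{\a,\b,\g\}(x),\{\a,\b,\g\}(y),\{\a,\b,\g\}(z)]$, is the real content: using $\D([x,y,z])=[x_1,y_1,z_1]\o[x_2,y_2,z_2]$ and that $\a,\b,\g$ are Hopf heap maps rewrites the left side as $[[\a(x_1),\a(y_1),\a(z_1)],[\b(x_2),\b(y_2),\b(z_2)],[\g(x_3),\g(y_3),\g(z_3)]]$, while the right side is $[[\a(x_1),\b(x_2),\g(x_3)],[\a(y_1),\b(y_2),\g(y_3)],[\a(z_1),\b(z_2),\g(z_3)]]$, and Lemma~\mref{lem:Hppro} (which uses \emph{commutativity} of $H$) identifies the two.

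Once (i) is established, the remaining steps are formal. Since all spanning elements of $E(H)$ and all $\{\a,\b,\g\}$ are group-like with counit $1$, $\widetilde\chi$ is a coalgebra map. The first heap axiom $\{\{\a,\b,\g\},\d,\eta\}=\{\a,\b,\{\g,\d,\eta\}\}$ follows by evaluating at $x$, using coassociativity to align the Sweedler indices, and invoking the first heap axiom of $H$; the second axiom $\{\a,\a,\g\}=\{\g,\a,\a\}=\widetilde\v(\a)\g=\g$ follows because, writing $x'\o x''=\D(x)$, one has $\a(x_1)\o\a(x_2)=\D(\a(x'))$, so $[\a(x')_1,\a(x')_2,\g(x'')]=\v(\a(x'))\g(x'')=\g(x)$ by the second heap axiom of $H$, and symmetrically on the other side. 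For (iii): composition of Hopf heap endomorphisms is again one and is associative, while $\widetilde\D(\a\circ\b)=(\a\circ\b)\o(\a\circ\b)$ is the componentwise composite of $\widetilde\D(\a)$ and $\widetilde\D(\b)$ and $\widetilde\v(\a\circ\b)=1$, so $\widetilde\D$ and $\widetilde\v$ are algebra maps. For (iv): since $\widetilde\D(\a)=\a\o\a$, the distributivity law of Definition~\mref{def:newhopftruss} reduces to $\a\circ\{\b,\g,\d\}=\{\a\circ\b,\a\circ\g,\a\circ\d\}$, and evaluating at $x$ gives $\a([\b(x_1),\g(x_2),\d(x_3)])=[\a\b(x_1),\a\g(x_2),\a\d(x_3)]$ because $\a$ is a Hopf heap map, which is exactly the right-hand side. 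Hence $(E(H),\{-,-,-\},\circ)$ is a Hopf truss.

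The step I expect to be the main obstacle is (i), and within it the identity $\{\a,\b,\g\}([x,y,z])=[\{\a,\b,\g\}(x),\{\a,\b,\g\}(y),\{\a,\b,\g\}(z)]$: this is the only place where both the commutativity hypothesis (via Lemma~\mref{lem:Hppro}) and the cocommutativity hypothesis (via reordering tensor legs) are genuinely used, and it is the one demanding careful bookkeeping of the iterated comultiplications of $x$, $y$, $z$. Everything after (i) is a routine Sweedler-notation manipulation.
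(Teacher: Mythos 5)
Your proof is correct and follows essentially the same route as the paper: the substantive step in both is showing $\{\a,\b,\g\}\in E(H)$ by combining Lemma~\mref{lem:Hppro} (commutativity) with a reordering of Sweedler legs (cocommutativity), after which the heap axioms, the bialgebra structure on $(E(H),\circ,\widetilde\D)$, and the distributivity law $f\circ\{\a,\b,\g\}=\{f\circ\a,f\circ\b,f\circ\g\}$ are routine and conclude via Proposition~\mref{prop:hopftrussandheap}. You in fact supply more detail than the paper on the heap axioms and the counit $\widetilde\v$, which the paper leaves as ``easily proved.''
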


\begin{proof}
Firstly, we prove that the operation $``\{-,-,-\}"$ is well-defined, that is, we need to prove $\{\a,\b,\g\}\in E(H)$.

As a matter of fact, for all $\a,\b,\g\in E(H)$, and $x,y,z\in H$, we have
\begin{eqnarray*}
\{\a,\b,\g\}([x,y,z])&=&[\a[x,y,z]_1,\b[x,y,z]_2,\g[x,y,z]_3]\\
&=&[\a[x_1,y_3,z_1],\b[x_2,y_2,z_2],\g[x_3,y_1,z_3]]\\
&=&[[\a(x_1),\a(y_3),\a(z_1)],[\b(x_2),\b(y_2),\b(z_2)],[\g(x_3),\g(y_1),\g(z_3)]]\\
&=&[[\a(x_1),\b(x_2),\g(x_3)],[\a(y_3),\b(y_2),\g(y_1)],[\a(z_1),\b(z_2),\g(z_3)]]\\
&=&[\{\a,\b,\g\}(x),\{\a,\b,\g\}(y),\{\a,\b,\g\}(z)].
\end{eqnarray*}

Secondly, we prove that $E(H)$ has a Hopf truss structure.

It is easy to prove that $\D(\{\a,\b,\g\}(x))=\{\a,\b,\g\}(x_1)\o \{\a,\b,\g\}(x_2)$, so $\{\a,\b,\g\}$ is a coalgebra map.

Moreover, we easily prove that $(E(H), \circ,\widetilde{\D})$ is a bialgebra, $(E(H), \{-,-,-\},\widetilde{\D})$ a Hopf heap, and for any $f,\a,\b,\g\in E(H)$, we have
\begin{eqnarray*}
(f\circ \{\a,\b,\g\})(x)&=&f(\{\a,\b,\g\}(x))\\
&=&f([\a(x_1),\b(x_2),\g(x_3)])\\
&=&[f\circ\a(x_1),f\circ\b(x_2),f\circ\g(x_3)]\\
&=&[f_1\circ\a(x_1),f_2\circ\b(x_2),f_3\circ\g(x_3)]\\
&=&\{f_1\circ \a,f_2\circ\b,f_3\circ\g\}(x).
\end{eqnarray*}

So, by Proposition \mref{prop:hopftrussandheap}, we know that $E(H)$ has a Hopf truss structure.

\end{proof}

\vspace{2mm}

\section{Hopf heap modules and their structure theorems}
\mlabel{sec:hpm}

In this section, we introduce the conception of Hopf heap modules, and give its structure theorem.

\begin{defn}\mlabel{def:hopfheapmodule1}

 \begin{enumerate}
\item Let $(H,\chi)$ be a Hopf heap, and $(M,\r)$ a left $H$-comodule (denote $\r(m)=m_{(-1)}\o m_{(0)}$). We call $M$ a left $H$-Hopf heap module if there is an action $\rhd: H\o H^{co}\o M\rightarrow M$ by $(c\o d)\rhd m$, such that for any $a,b,c,d\in H, m\in M$,
 \begin{eqnarray}
 &&([a,b,c]\o d)\rhd m=(a\o b)\rhd ((c\o d)\rhd m),\\
 &&(c_1\o c_2)\rhd m=\v(c)m,\\
 &&\r((c\o d)\rhd m)=[c_1,d_2,m_{(-1)}]\o (c_2\o d_1)\rhd m_{(0)}.
\end{eqnarray}
In the following, we denote the $H$-Hopf heap module $M$ by $(M,\rhd,\r)$.

 \item Let $(M,\rhd_1,\r_1)$ and $(N,\rhd_2,\r_2)$ be two left $H$-Hopf heap modules, and $f: M\rightarrow N$ a linear map. We call $f$ a left $H$-Hopf heap module map if it is a left $H$-comodule map and
$$f((a\o b)\rhd_1m)=(a\o b)\rhd_2f(m),
$$ for any $a,b\in H, m\in M$.

Denote the category of left $H$-Hopf heap modules by $\mathbf{HPM}$.
\end{enumerate}
\end{defn}

\begin{exam}\mlabel{ex:hopfheapmodule}
\begin{enumerate}
\item Let $(H,\chi)$ be a Hopf heap. Then $(H,\rhd\equiv\chi,\D)$ is a left $H$-Hopf heap module.
\\
In particular, if $H$ is a Hopf algebra with antipode $S$, then, by Example \mref{exam:hopfheaphopfalg}, we have a
Hopf heap $(H,[-,-,-])$. So, we get a left $H$-Hopf heap module $(H,\rhd,\D)$, where $(h\otimes g)\rhd l=hS(g)l$ for $h,g,l\in H$.

 \item Let $(H,\chi)$ be a Hopf heap, and $M$ a vector space.
Define two linear maps:
\begin{center}
$\rhd: H\o H^{co}\o H\o M\rightarrow H\o M$, $(h\o g)\rhd (l\o m)=[h,g,l]\o m$,
\end{center}
 \begin{center}
$\r:H\o M\rightarrow H\o H\o M, \r(h\o m)=h_1\o h_2\o m$.
\end{center}
Then $(H\o M,\rhd, \r)$ is a left $H$-Hopf heap module.
 \end{enumerate}

 \begin{proof} (a) The proof is straightforward.

 (b) It is obvious that $(H\o M, \r)$ is a left $H$-comodule. Moreover, we have
 \begin{eqnarray*}
 ([a,b,c]\o d)\rhd (g\o m)&=&[[a,b,c],d,g]\o m\\
 &=&[a,b,[c,d,g]]\o m\\
 &=&(a\o b)\rhd ((c\o d)\rhd (g\o m)),\\
 (c_1\o c_2)\rhd (g\o m)&=&[c_1,c_2,g]\o m=\v(c)g\o m,\\
 \r((c\o d)\rhd (g\o m))&=&\r([c,d,g]\o m)\\
 &=&[c,d,g]_1\o [c,d,g]_2\o m\\
 &=&[c_1,d_2,g_1]\o [c_2,d_1,g_2]\o m\\
 &=&[c_1,d_2,g_1]\o (c_2\o d_1)\rhd (g_2\o m)\\
 &=&[c_1,d_2,(g\o m)_{(-1)}]\o (c_2\o d_1)\rhd (g\o m)_{(0)},
 \end{eqnarray*}
  for any $a,b,c,d,g\in H$ and $m\in M$.
  \end{proof}
\end{exam}

\begin{defn}\mlabel{def:hopfmodule}

 \mcite{Sweedler} Let $H$ be a Hopf algebra, $(M,\cdot)$ a left $H$-module, and $(M,\r)$ a left $H$-comodule. If for any $h\in H, m\in M$,
 $$
 \r(h\c m)=h_1m_{(-1)}\o h_2\c m_{(0)},
 $$
 we call $M$ a left $H$-Hopf module.

 Denote the category of left $H$-Hopf modules by $\mathbf{HM}$.
\end{defn}

In the following, we establish the relations between Hopf heap modules and Hopf modules.

\begin{prop}\mlabel{prop:hopfmodulehopfheapmodule}
\begin{enumerate}
\item Let $M$ be a left $H$-Hopf heap module. Then $M$ is a left $H_x(H)$-Hopf module.

 \item Conversely, if $M$ is a left $H$-Hopf module, then $M$ is a left $Hp(H,[-,-,-])$-Hopf heap module.
     \end{enumerate}
\end{prop}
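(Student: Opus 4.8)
The plan is to make both statements concrete by choosing the obvious candidate structures and then checking the Hopf heap module axioms (7)--(9) against the Hopf module compatibility of Definition \ref{def:hopfmodule}, using the explicit descriptions of $H_x(H)$ and $Hp(H,[-,-,-])$ from Example \ref{exam:hopfheaphopfalg}.

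For part (1), fix $x\in G(H)$ and, starting from a left $H$-Hopf heap module $(M,\rhd,\r)$, I would define a left $H_x(H)$-action by $h\cdot m:=(h\o x)\rhd m$, keeping the comodule $(M,\r)$ unchanged (this is legitimate because $H_x(H)$ and $H$ share the same underlying coalgebra). Associativity $(hg)\cdot m=h\cdot(g\cdot m)$ is then exactly axiom (7) with $d=x$, since the product of $H_x(H)$ is $hg=[h,x,g]$; the unit law $x\cdot m=m$ is axiom (8) applied to the group-like element $x$, for which $x_1\o x_2=x\o x$ and $\v(x)=1$. Finally, specializing axiom (9) to $c=h$, $d=x$ (so that $x_1=x_2=x$) yields $\r(h\cdot m)=[h_1,x,m_{(-1)}]\o(h_2\o x)\rhd m_{(0)}=h_1 m_{(-1)}\o h_2\cdot m_{(0)}$, which is precisely the Hopf module compatibility over $H_x(H)$.

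For part (2), starting from a left $H$-Hopf module $(M,\cdot,\r)$ over the Hopf algebra $H$, I would equip $M$ with the $Hp(H,[-,-,-])$-action $(c\o d)\rhd m:=cS(d)\cdot m$, again retaining $\r$. Axiom (7) holds because both sides equal $aS(b)cS(d)\cdot m$, using associativity of $\cdot$ and $[a,b,c]=aS(b)c$; axiom (8) reads $c_1S(c_2)\cdot m=\v(c)1_H\cdot m=\v(c)m$. For axiom (9), I would expand $\D(cS(d))=c_1S(d_2)\o c_2S(d_1)$ (this is where one must use that $S$ is an anticoalgebra morphism) and apply the Hopf module compatibility to $h=cS(d)$, obtaining $\r(cS(d)\cdot m)=c_1S(d_2)m_{(-1)}\o c_2S(d_1)\cdot m_{(0)}=[c_1,d_2,m_{(-1)}]\o(c_2\o d_1)\rhd m_{(0)}$, as desired.

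All the verifications are routine manipulations; the only non-mechanical step is the use of the anticoalgebra-morphism property of $S$ in part (2), together with carefully keeping track of which tensor slot is frozen --- to $x$ in part (1) and to the antipode slot in part (2) --- when specializing axioms (7)--(9).
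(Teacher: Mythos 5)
Your proposal is correct and follows essentially the same route as the paper: the same action $h\cdot m=(h\o x)\rhd m$ in part (1), the same action $(c\o d)\rhd m=cS(d)\cdot m$ in part (2), and the same verifications of axioms (7)--(9) against the Hopf module compatibility, including the use of $\D(cS(d))=c_1S(d_2)\o c_2S(d_1)$.
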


\begin{proof} (a) Since $(H,\chi)$ is a Hopf heap, $H_x(H)$ is a Hopf algebra with identity $x$, where $x\in G(H)$.

Define the action as follows:
$$h\c m=(h\o x)\rhd m, h\in H, m\in M.$$

Then, for any $h,g\in H, m\in M$, we have
\begin{eqnarray*}
 h\c(g\c m)&=&(h\o x)\rhd ((g\o x)\rhd m)=([h,x,g]\o x)\rhd m\\
 &=&(hg\o x)\rhd m=(hg)\c m,\\
 x\c m&=&(x\o x)\rhd m=m,\\
 \r(h\c m)&=&\r((h\o x)\rhd m)=[h_1,x,m_{(-1)}]\o (h_2\o x)\rhd m_{(0)}\\
 &=&h_1m_{(-1)}\o h_2\c m_{(0)},
 \end{eqnarray*}
so, $M$ is a left $H_x(H)$-Hopf module.

(b) The Hopf algebra $H$ implies that $(H,[-,-,-])$ is a Hopf heap by setting $[a,b,c]=aS(b)c$ for $a,b,c\in H$.

Define the action: $(a\o b)\rhd m=(aS(b))\c m$, for $a,b\in H$ and $m\in M$. We have
\begin{eqnarray*}
(c_1\o c_2)\rhd m&=&(c_1S(c_2))\c m=\v(c)m,\\
(a\o b)\rhd ((c\o d)\rhd m)&=&(a\o b)\rhd ((cS(d))\c m)\\
&=&(aS(b))\c ((cS(d))\c m)\\
&=&(aS(b)cS(d))\c m\\
&=&(aS(b)c\o d)\rhd m\\
&=&([a,b,c]\o d)\rhd m,\\
\r((c\o d)\rhd m)&=&\r((cS(d))\c m)\\
&=&(cS(d))_1m_{(-1)}\o (cS(d))_2\c m_{(0)}\\
&=&c_1S(d_2)m_{(-1)}\o (c_2S(d_1))\c m_{(0)}\\
&=&[c_1,d_2,m_{(-1)}]\o (c_2\o d_1)\rhd m_{(0)},
\end{eqnarray*}
for any $a,b,c,d\in H$. Thus, by Definition \mref{def:hopfheapmodule1}, $M$ is a left $Hp(H,[-,-,-])$-Hopf heap module.
\end{proof}

\begin{remark}
\begin{enumerate}
\item Suppose that $f:M\rightarrow N$ is a left $H$-Hopf heap module map. Then $f$ is a left $H_{x}(H)$-Hopf module map.

In fact, by Proposition \mref{prop:hopfmodulehopfheapmodule}, $M$ and $N$ are left $H_{x}(H)$-Hopf modules. In addition, for any $h\in H, m\in M$,
$$
f(h\c m)=f((h\o x)\rhd m)=(h\o x)\rhd f(m)=h\c f(m).
$$

Hence, there is a functor $G:\mathbf{HPM}\rightarrow \mathbf{HM}$ from the category of left $H$-Hopf heap modules to the category of left $H_{x}(H)$-Hopf modules. The functor is given by
$$
G(M)=M,\ G(f)=f.
$$

 \item Suppose that $f:M\rightarrow N$ is a left $H$-Hopf module map. Then $f$ is a left $Hp(H,[-,-,-])$-Hopf heap module map.

In fact, by Proposition \mref{prop:hopfmodulehopfheapmodule}, $M$ and $N$ are left $Hp(H,[-,-,-])$-Hopf heap modules, and
$$f((a\o b)\triangleright_M m)=f((aS(b))\c m)=(aS(b))\c f(m)=(a\o b)\triangleright_N f(m),$$
for any $a,b\in H$ and $m\in M$, that is, $f$ is a left $Hp(H,[-,-,-])$-Hopf heap module map.

Hence, there is a functor $F:\mathbf{HM}\rightarrow \mathbf{HPM}$ from the category of left $H$-Hopf modules to the category of left $Hp(H,[-,-,-])$-Hopf heap modules. The functor is given by
$$
F(M)=M,\ F(f)=f.
$$

 \item Let $H$ be a Hopf algebra, and $G(H)=\{e\}$. Then there is an isomorphism of categories as follows:
$$\mathbf{HM}\stackrel{F}\cong \mathbf{HPM}.
$$

In fact, by Remark \mref{rem:HaHp}, we know that
$$H_e(Hp(H,[-,-,-]))=H,$$
thus, according to items (a) and (b), we obtain the above isomorphism.
\end{enumerate}
\end{remark}

In what follows, we give the structure theorem of Hopf heap modules.

\begin{theorem}\mlabel{the:thestructuretheorem}
Let $(M,\rhd,\r)$ be a left $H$-Hopf heap module, and $M^{coH}_x=\{m\in M, \r(m)=x\o m\}$ where $x\in G(H)$. Then there exists an isomorphism of $H$-Hopf heap modules:
$$M\cong H\o M^{coH}_x.$$
\end{theorem}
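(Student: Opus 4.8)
The plan is to reduce the statement to the classical Fundamental Theorem of Hopf Modules via the functorial dictionary between Hopf heaps and Hopf algebras already set up in Section~\ref{sec:hp} and Proposition~\ref{prop:hopfmodulehopfheapmodule}. Fix $x\in G(H)$, so that by Example~\ref{exam:hopfheaphopfalg} the coalgebra $H$ carries the Hopf algebra structure $H_x(H)$ with $ab=[a,x,b]$, $S(a)=[x,a,x]$, and unit $x$. By Proposition~\ref{prop:hopfmodulehopfheapmodule}(a), the Hopf heap module $(M,\rhd,\r)$ becomes a left $H_x(H)$-Hopf module under $h\cdot m=(h\otimes x)\rhd m$, and one checks directly that the heap coinvariants $M^{coH}_x=\{m\in M:\r(m)=x\otimes m\}$ coincide with the Hopf-module coinvariants of $M$ over $H_x(H)$ (the coaction $\r$ is unchanged by the functor, and $x$ is the unit of $H_x(H)$, so the two notions of coinvariant are literally the same set).

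Next I would invoke the Fundamental Theorem of Hopf Modules (Sweedler, \cite{Sweedler}): for the Hopf algebra $K:=H_x(H)$ and the left $K$-Hopf module $M$, the map
$$
\Phi: K\otimes M^{coK}\longrightarrow M,\qquad k\otimes m\longmapsto k\cdot m,
$$
is an isomorphism of left $K$-Hopf modules, with inverse $m\mapsto m_{(-1)}\otimes (S(m_{(-2)})\cdot m_{(0)})$ computed in $K$; here $K\otimes M^{coK}$ carries the Hopf-module structure with $K$ acting on the left tensor factor by multiplication and coacting by $\Delta_K\otimes \mathrm{id}$. Since $K=H_x(H)$ has underlying coalgebra $H$, as vector spaces and coalgebras we have $K\otimes M^{coK}=H\otimes M^{coH}_x$, which is exactly the right-hand side of the theorem. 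It remains only to translate the $K$-Hopf module structure on $H_x(H)\otimes M^{coH}_x$ back through the functor $F$ of Proposition~\ref{prop:hopfmodulehopfheapmodule}(b) into the $H$-Hopf heap module structure that $H\otimes M^{coH}_x$ is meant to carry — and by Example~\ref{ex:hopfheapmodule}(b) (with $M^{coH}_x$ in the role of the bare vector space $M$ there) the Hopf heap module $H\otimes M^{coH}_x$ has action $(h\otimes g)\rhd(l\otimes m)=[h,g,l]\otimes m$ and coaction $h\otimes m\mapsto h_1\otimes h_2\otimes m$. A short computation shows that applying the functor $F$ to this Hopf heap module recovers precisely the standard $H_x(H)$-Hopf module structure on $H_x(H)\otimes M^{coH}_x$: indeed $(h\otimes g)\rhd(l\otimes m)=[h,g,l]\otimes m=[h,x,[x,g,x]]\cdot$-style manipulations give $h\cdot(l\otimes m)$ when $g=x$, i.e.\ $(hS(g))\cdot(l\otimes m)$ in $H_x(H)$, matching $F$'s recipe $(a\otimes b)\rhd m=(aS(b))\cdot m$.

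Assembling the pieces: $F(\Phi):H\otimes M^{coH}_x\to M$ is an isomorphism of left $H$-Hopf heap modules because $F$ is a functor that is the identity on underlying spaces and morphisms, $\Phi$ is an isomorphism of $H_x(H)$-Hopf modules, and both source and target are identified with the correct Hopf heap modules under $F$. Concretely the isomorphism is $h\otimes m\mapsto (h\otimes x)\rhd m$ with inverse $m\mapsto m_{(-1)}\otimes\bigl([x,m_{(-2)},x]\otimes x\bigr)\rhd m_{(0)}$, though I would likely just cite the functorial argument rather than verify the inverse by hand. The main obstacle — and the one place a careful check is genuinely needed — is the bookkeeping in the last paragraph: verifying that the image under $F$ of Sweedler's model Hopf module $H_x(H)\otimes M^{coH}_x$ is the \emph{same} $H$-Hopf heap module as the one in Example~\ref{ex:hopfheapmodule}(b), i.e.\ that the action $[h,g,l]\otimes m$ and coaction $h_1\otimes h_2\otimes m$ are exactly what the functor produces from left-multiplication and $\Delta_K\otimes\mathrm{id}$. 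This is a direct but slightly fiddly use of the identities $ab=[a,x,b]$ and $S(b)=[x,b,x]$ together with the Hopf heap axioms (1)--(2); everything else is formal.
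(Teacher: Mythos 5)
Your proposal is correct, but it takes a genuinely different route from the paper. The paper proves the theorem from scratch inside the Hopf heap formalism: it defines the projector $P(m)=(x\o m_{(-1)})\rhd m_{(0)}$, checks $\mathrm{Im}\,P\subseteq M^{coH}_x$, and then verifies by direct computation with axioms (7)--(9) that $\a(a\o m)=(a\o x)\rhd m$ and $\b(m)=m_{(-1)}\o P(m_{(0)})$ are mutually inverse $H$-Hopf heap module maps. You instead transport the whole statement through the dictionary of Proposition \mref{prop:hopfmodulehopfheapmodule} and quote Sweedler's fundamental theorem for the Hopf algebra $K=H_x(H)$; this is legitimate, and the resulting isomorphism $h\o m\mapsto (h\o x)\rhd m$ is literally the paper's $\a$ (note also that your $S(m_{(-1)})\cdot m_{(0)}=([x,m_{(-1)},x]\o x)\rhd m_{(0)}=(x\o m_{(-1)})\rhd m_{(0)}$ is exactly the paper's $P$). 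The two genuine obligations in your route are the ones you flag: (i) the identification of coinvariants, which is immediate since $1_K=x$; and (ii) the check that the functor $F$ sends the standard Hopf module $K\o M^{coK}$ (action by left multiplication, coaction $\D_K\o\mathrm{id}$) to the Hopf heap module of Example \mref{ex:hopfheapmodule}(b) — this works because $aS(b)l=[a,x,[x,b,x]]\cdot_x l=[a,b,l]$ in $H_x(H)$, and because $Hp(H_x(H),[-,-,-]_x)=H$ by Remark \mref{rem:HaHp}(a), so the target really is an $H$-Hopf heap module. What your approach buys is brevity and a conceptual explanation (the theorem \emph{is} the fundamental theorem of Hopf modules seen through the heap/algebra equivalence — essentially the content of the Remark the authors place after the theorem); what the paper's approach buys is a self-contained argument that does not depend on the left-module version of Sweedler's theorem or on the functorial bookkeeping. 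If you write yours up, do carry out the translation check in (ii) explicitly, and fix the Sweedler indices in the stated inverse ($m\mapsto m_{(-2)}\o S(m_{(-1)})\cdot m_{(0)}$ in the usual iterated-coaction convention); otherwise the argument is complete.
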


\begin{proof}
By Example \mref{ex:hopfheapmodule}, we know that $H\o M^{coH}_x$ is a Hopf heap module. Define a linear map
\begin{center}
$P:M\rightarrow M$ by $P(m)=(x\o m_{(-1)})\rhd m_{(0)}$.
%$P(m)=([x,m_{(-1)},x]\o x)\rhd m_{(0)}$.
\end{center}

In what follows, we prove that $ImP\subset M^{coH}_x$.

In fact, for any $m\in M$, we get
\begin{eqnarray*}
\r(P(m))&=&\r((x\o m_{(-1)})\rhd m_{(0)})\\
&=&[x,m_{(-1)2},m_{(0)(-1)}]\o (x\o m_{(-1)1})\rhd m_{(0)(0)}\\
&=&[x,m_{(-1)2},m_{(-1)3}]\o (x\o m_{(-1)1})\rhd m_{(0)}\\
&=&x\o ((x\o m_{(-1)})\rhd m_{(0)})\\
&=&x\o P(m).
\end{eqnarray*}

Define two maps as follows:
$$\a:H\o M^{coH}_x\rightarrow M,\  \a(a\o m)=(a\o x)\rhd m,
 $$
 $$\b:M\rightarrow H\o M^{coH}_x, \ \b(m)=m_{(-1)}\o P(m_{(0)}).$$

  Nextly, we prove that $\a\b=$ id and $\b\a=$ id.

  In fact, for any $m\in M,$ we have
\begin{eqnarray*}
\a\b(m)&=&\a(m_{(-1)}\o P(m_{(0)}))\\
&=&\a(m_{(-1)}\o ((x\o m_{(0)(-1)})\rhd m_{(0)(0)}))\\
&=&\a(m_{(-1)1}\o ((x\o m_{(-1)2})\rhd m_{(0)}))\\
&=&(m_{(-1)1}\o x)\rhd (((x\o m_{(-1)2})\rhd m_{(0)}))\\
&=&(m_{(-1)1}\o m_{(-1)2})\rhd m_{(0)}\\
&=&m,
\end{eqnarray*}
and hence for any $a\in H, n\in M^{coH}_x$, we obtain
\begin{eqnarray*}
\b\a(a\o n)&=&\b((a\o x)\rhd n)\\
&=&((a\o x)\rhd n)_{(-1)}\o P(((a\o x)\rhd n)_{(0)})\\
&=&[a_1, x, n_{(-1)}]\o P((a_2\o x)\rhd n_{(0)})\\
&=&[a_1, x, x]\o P((a_2\o x)\rhd n)\\
&=&a_1\o P((a_2\o x)\rhd n)\\
&=&a_1\o ((x\o((a_2\o x)\rhd n)_{(-1)})\rhd ((a_2\o x)\rhd n)_{(0)})\\
&=&a_1\o ((x\o [a_{21},x,n_{(-1)}])\rhd ((a_{22}\o x)\rhd n_{(0)}))\\
&=&a_1\o ((x\o [a_{2},x,x])\rhd ((a_{3}\o x)\rhd n))\\
&=&a_1\o ((x\o a_{2})\rhd ((a_{3}\o x)\rhd n))\\
&=&a_1\o ([x,a_{2}, a_{3}]\o x)\rhd n\\
&=&a\o ((x\o x)\rhd n)\\
&=&a\o n.
\end{eqnarray*}

Finally, we prove that $\a$ is a Hopf heap module map: for any $a\in H, n\in M^{coH}_x$, we get
\begin{eqnarray*}
\r\a(a\o n)&=&\r((a\o x)\rhd n)\\
&=&[a_1,x,n_{(-1)}]\o ((a_2\o x)\rhd n_{(0)})\\
&=&[a_1,x,x]\o ((a_2\o x)\rhd n)\\
&=&a_1\o ((a_2\o x)\rhd n)\\
&=&(\mathrm{id}\o\a)(a_1\o a_2\o n)\\
&=&(\mathrm{id}\o\a)\r(a\o n),
\end{eqnarray*}
and for any $a,b,g\in H$, we have
\begin{eqnarray*}
\a((a\o b)\rhd_{H\o M^{coH}}(g\o n))&=&\a([a,b,g]\o n)\\
&=&([a,b,g]\o x)\rhd n\\
&=&(a\o b)\rhd (g\o x)\rhd n\\
&=&(a\o b)\rhd\a((g\o n).
\end{eqnarray*}
\end{proof}

\begin{remark}
(a) Let $(M,\rhd,\r)$ be a left $H$-Hopf heap module. Then by Proposition \mref{prop:hopfmodulehopfheapmodule}, $M$ is a left $H_{x}(H)$-Hopf module, where $H_{x}(H)$ is a Hopf algebra with unit $x$. So, there exists an isomorphism $H_{x}(H)$-Hopf modules:
$$M\cong H_{x}(H)\o M^{coH}.$$

(b) Let $M$ be a left $H$-Hopf module. Then by Proposition \mref{prop:hopfmodulehopfheapmodule}, $M$ is a left $Hp(H,[-,-,-])$-Hopf heap module. So, by Theorem \mref{the:thestructuretheorem}, for any given element $x\in G(H)$, there exists an isomorphism $H$-Hopf heap modules:
$$M\cong Hp(H,[-,-,-])\o M^{coH}_x.$$

\end{remark}

\section{Rota-Baxter operators on Hopf heaps}\mlabel{sec:rbohp}

In this section, we introduce the definition of Rota-Baxter Hopf heaps, and build some connections between Rota-Baxter operators on Hopf heaps and Rota-Baxter co-operators on Hopf algebras.

\subsection{Rota-Baxter operators on Hopf heaps}
\begin{defn}\mlabel{def:rbohopfheap}
 Let $(C, \D, \chi)$ be a Hopf heap. A linear map $B:C\rightarrow C$ is called a Rota-Baxter operator on $(C, \D, \chi)$ if for all $a, b, c\in C:$
 \begin{eqnarray}
 &&B([a,b,c])=[B(a),B(b),B(c)],\\
&&B(a_{1})\o B(a_{2})=[B(a)_{1},B(B(a)_{4}),B(B(a)_{2})]\o B(a)_{3}.
\end{eqnarray}
\end{defn}

\begin{exam}\mlabel{exam:rbohopfheap5}
\begin{enumerate}
\item Let $C$ be a vector space with a basis $\{u,\theta\}$. Define
$$
\D(u)=u\o \theta+ \theta \o u, ~~\D(\theta)=\theta\o \theta-u\o u,
$$
$$\v(u)=0, ~~\v(\theta)=1.
$$
$$
[u,u,u]=-u, [\theta,\theta,\theta]=\theta, ~\text{else is zero}.
$$

By Example \mref{exam:hopfheap1}, we know that $(C,\D,\chi)$ is a commutative Hopf heap. Hence, by Definition \mref{def:rbohopfheap}, we have two kinds of nontrivial Rota-Baxter operators on the Hopf heap $(C, \D, \chi)$ given by

(i)~$B(u)=0, B(\theta)=\theta;$

(ii)~$B(u)=-u, B(\theta)=\theta.$

Now let's prove that ~(ii)~is valid:
$$B([\theta,\theta,\theta])=\theta=[B(\theta),B(\theta),B(\theta)],$$
$$B([u,u,u])=u=[B(u),B(u),B(u)],$$
\begin{eqnarray*}
	\D^{4}(u)
	&=& u\o \theta\o \theta\o \theta+\theta\o u\o \theta\o \theta+\theta\o \theta\o u\o \theta + \theta\o \theta\o \theta\o u\\
	&=&-u\o u\o u\o \theta-u\o u\o \theta\o u-u\o \theta\o u\o u-\theta\o u\o u\o u,
\end{eqnarray*}
\begin{eqnarray*}
	\D^{4}(\theta)
	&=& \theta\o \theta\o \theta\o \theta-u\o u\o \theta\o \theta-u\o \theta\o u\o \theta - \theta\o u\o u\o \theta\\
	&=&-u\o \theta\o \theta\o u-\theta\o u\o \theta\o u-\theta\o \theta\o u\o u+u\o u\o u\o u,
\end{eqnarray*}
\begin{eqnarray*}
	[B(u)_{1},B(B(u)_{4}),B(B(u)_{2})]\o B(u)_{3}&=&-[u_{1},B(u_{4}),B(u_{2})]\o u_{3}\\
	&=&-\theta\o u-u\o \theta\\
    &=&B(\theta)\o B(u)+B(u)\o B(\theta)\\
    &=&B(u_{1})\o B(u_{2}),
\end{eqnarray*}
\begin{eqnarray*}
	[B(\theta)_{1},B(B(\theta)_{4}),B(B(\theta)_{2})]\o B(\theta)_{3}&=& [\theta_{1},B(\theta_{4}),B(\theta_{2})]\o \theta_{3}\\
		&=&\theta\o \theta-u\o u\\
	&=&B(\theta)\o B(\theta)-B(u)\o B(u)\\
	&=&B(\theta_{1})\o B(\theta_{2}).
\end{eqnarray*}

 Similarly, we can prove that given map $B$ with $B(u)=0$ and $B(\theta)=\theta$ is also a nontrivial Rota-Baxter operator on the Hopf heap $(C,\D,\chi)$.

 \item Let $B_{1}$ be a Rota-Baxter operator on $(C, \D_{C}, \chi_{C})$, and $B_{2}$ a Rota-Baxter operator on $(D, \D_{D}, \chi_{D})$.
Then, by Example \mref{exam:hopfheap1}, $(C\o D, \D_{C\o D}, \chi_{C\o D})$ is a Hopf heap.

Define $B:C\o D\rightarrow C\o D,c\o h\mapsto B_{1}(c)\o B_{2}(h)$. Then $B$ is a Rota-Baxter operator on the Hopf heap $(C\o D, \D_{C\o D}, \chi_{C\o D})$.

In fact, for any $a,b,c\in C, h,g,f\in D$, we have
\begin{eqnarray*}
	B([a\o h,b\o g,c\o f]_{C\o D})
	&=& B([a,b,c]_{C}\o [h,g,f]_{D})\\
	&=& B_{1}([a,b,c]_{C})\o B_{2}([h,g,f]_{D})\\
	&=& [B_{1}(a),B_{1}(b),B_{1}(c)]_{C}\o [B_{2}(h),B_{2}(g),B_{2}(f)]_{D}\\
	&=&  [B_{1}(a)\o B_{2}(h),B_{1}(b)\o B_{2}(g),B_{1}(c)\o B_{2}(f)]_{C\o D}\\
	&=&  [B(a\o h),B(b\o g),B(c\o f)]_{C\o D},
\end{eqnarray*}
and
\begin{eqnarray*}
	&&[B(a\o h)_{1},B(B(a\o h)_{4}),B(B(a\o h)_{2})]_{C\o D}\o B(a\o h)_{3}\\
	&=& [B_{1}(a)_{1}\o B_{2}(h)_{1},B(B_{1}(a)_{4}\o B_{2}(h)_{4}),B(B_{1}(a)_{2}\o B_{2}(h)_{2})]_{C\o D}\o B_{1}(a)_{3}\o B_{2}(h)_{3}\\
	&=& [B_{1}(a)_{1}\o B_{2}(h)_{1},B_{1}(B_{1}(a)_{4})\o B_{2}(B_{2}(h)_{4}),B_{1}(B_{1}(a)_{2})\o B_{2}(B_{2}(h)_{2})]_{C\o D}\o B_{1}(a)_{3}\o B_{2}(h)_{3}\\
	&=& [B_{1}(a)_{1},B_{1}(B_{1}(a)_{4}),B_{1}(B_{1}(a)_{2})]_{C}\o [B_{2}(h)_{1},B_{2}(B_{2}(h)_{4}),B_{2}(B_{2}(h)_{2})]_{D}\o B_{1}(a)_{3}\o B_{2}(h)_{3}\\
    &=&(\textnormal{id}\otimes \tau\otimes \textnormal{id})([B_{1}(a)_{1},B_{1}(B_{1}(a)_{4}),B_{1}(B_{1}(a)_{2})]_{C}\o B_{1}(a)_{3}\o [B_{2}(h)_{1},B_{2}(B_{2}(h)_{4}),B_{2}(B_{2}(h)_{2})]_{D}\\
    && \o B_{2}(h)_{3})\\
	&=&(\textnormal{id}\otimes \tau\otimes \textnormal{id})( B_{1}(a_{1})\o B_{1}(a_{2})\o B_{2}(h_{1})\o B_{2}(h_{2}))\\
	&=&  B(a_{1}\o h_{1})\o B(a_{2}\o h_{2})\\
	&=&  B((a\o h)_{1})\o B((a\o h)_{2}).
\end{eqnarray*}
\end{enumerate}
\end{exam}

\begin{prop}\mlabel{prop:hopfheap2}
Let $B$ be a Rota-Baxter operator on the Hopf heap $(C, \D, \chi)$. Then $B^{(\psi)}=\psi\circ B\circ\psi^{-1}$ is also a Rota-Baxter operator on the Hopf heap $(C, \D, \chi)$, if there exists a Hopf heap automorphism $\psi:C\rightarrow C$.

\begin{proof} It is easy to see that $\psi^{-1}$ is a Hopf
heap homorphism. Moreover, for all $a,b,c\in C$, we have
\begin{eqnarray*}
	B^{(\psi)}([a,b,c])
	&=& \psi(B(\psi^{-1}([a,b,c])))\\
	%&=& \psi( B([\psi^{-1}(a),\psi^{-1}(b),\psi^{-1}(c)]))\\
	%&=& \psi([ B(\psi^{-1}(a)), B(\psi^{-1}(b)), B(\psi^{-1}(c))])\\
	&=& [ \psi(B(\psi^{-1}(a))), \psi(B(\psi^{-1}(b))), \psi(B(\psi^{-1}(c)))]\\
	&=&[B^{(\psi)}(a),	B^{(\psi)}(b),	B^{(\psi)}(c)],
\end{eqnarray*}
and
\begin{eqnarray*}	&&[B^{(\psi)}(a)_{1},B^{(\psi)}(B^{(\psi)}(a)_{4}),B^{(\psi)}(B^{(\psi)}(a)_{2})]\o B^{(\psi)}(a)_{3} \\
	&=&[\psi(B(\psi^{-1}(a)))_{1},\psi(B(\psi^{-1}(\psi(B(\psi^{-1}(a)))_{4}))),\psi(B(\psi^{-1}(\psi(B(\psi^{-1}(a)))_{2})))]\o \psi(B(\psi^{-1}(a)))_{3} \\
	&=&[\psi(B(\psi^{-1}(a))_{1}),\psi(B(\psi^{-1}(\psi(B(\psi^{-1}(a))_{4})))),\psi(B(\psi^{-1}(\psi(B(\psi^{-1}(a))_{2}))))]\o \psi(B(\psi^{-1}(a))_{3}) \\
	&=&(\psi\o\psi)(\underbrace{[B(\psi^{-1}(a))_{1},B(B(\psi^{-1}(a))_{4}),B(B(\psi^{-1}(a))_{2})]\o B(\psi^{-1}(a))_{3}})\\
	&=& (\psi\o\psi)(B(\psi^{-1}(a)_{1})\o B(\psi^{-1}(a)_{2}))\\
	&=& (\psi\o\psi)(B(\psi^{-1}(a_{1}))\o B(\psi^{-1}(a_{2})))\\
	&=& \psi(B(\psi^{-1}(a_{1})))\o \psi(B(\psi^{-1}(a_{2})))\\
		&=& B^{(\psi)}(a_{1})\o B^{(\psi)}(a_{2}).
\end{eqnarray*}
Hence $B^{(\psi)}$ is a Rota-Baxter operator on the Hopf heap $(C, \D, \chi)$.
\end{proof}

\end{prop}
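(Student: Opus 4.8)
The plan is to check directly that $B^{(\psi)}=\psi\circ B\circ\psi^{-1}$ satisfies the two defining identities of Definition \mref{def:rbohopfheap}, transporting the corresponding identities for $B$ along $\psi$. The one preliminary fact I need is that, since $\psi$ is a bijective Hopf heap homomorphism, $\psi^{-1}$ is also a Hopf heap homomorphism; hence $\psi$ and $\psi^{-1}$ are both coalgebra maps, so they intertwine $\D$ and all its iterates (e.g. $\psi(a)_1\o\psi(a)_2=\psi(a_1)\o\psi(a_2)$), and both preserve the ternary operation, $\psi([a,b,c])=[\psi(a),\psi(b),\psi(c)]$ and likewise for $\psi^{-1}$.

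For the first identity, I would simply compute $B^{(\psi)}([a,b,c])=\psi(B(\psi^{-1}([a,b,c])))$, move $\psi^{-1}$ inside the bracket, apply the first identity for $B$, and then move $\psi$ back inside the bracket; this immediately gives $[B^{(\psi)}(a),B^{(\psi)}(b),B^{(\psi)}(c)]$.

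For the second identity, the idea is to reduce it to the second identity for $B$ evaluated at $\psi^{-1}(a)$. First, since $\psi$ is a coalgebra map, each Sweedler leg satisfies $B^{(\psi)}(a)_j=\psi(B(\psi^{-1}(a))_j)$. The key simplification is that in the nested terms $B^{(\psi)}(B^{(\psi)}(a)_4)$ and $B^{(\psi)}(B^{(\psi)}(a)_2)$ the inner $\psi^{-1}$ cancels the $\psi$ produced by the leg, so these collapse to $\psi(B(B(\psi^{-1}(a))_4))$ and $\psi(B(B(\psi^{-1}(a))_2))$ respectively. Pulling all the outer copies of $\psi$ out (using preservation of $[-,-,-]$ and of the tensor factors), the left-hand side of the second identity becomes $(\psi\o\psi)$ applied to $[B(\psi^{-1}(a))_1,B(B(\psi^{-1}(a))_4),B(B(\psi^{-1}(a))_2)]\o B(\psi^{-1}(a))_3$; the second identity for $B$ at $\psi^{-1}(a)$ turns this into $(\psi\o\psi)(B(\psi^{-1}(a)_1)\o B(\psi^{-1}(a)_2))$, and since $\psi^{-1}$ is a coalgebra map this equals $B^{(\psi)}(a_1)\o B^{(\psi)}(a_2)$, as desired. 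Hence $B^{(\psi)}$ is a Rota-Baxter operator on $(C,\D,\chi)$.

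The routine steps (the first identity, and the rearrangements in the second) are immediate; the only point demanding attention is the Sweedler-index bookkeeping in the second identity — making sure $\psi$ passes through every leg of the iterated coproduct and that the $\psi^{-1}\circ\psi=\mathrm{id}$ cancellations are carried out on the correct components. Once the coalgebra-map property of $\psi$ and $\psi^{-1}$ is used consistently, nothing deeper is involved.
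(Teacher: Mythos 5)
Your proposal is correct and follows essentially the same route as the paper's own proof: transport both Rota--Baxter identities along $\psi$, using that $\psi$ and $\psi^{-1}$ are coalgebra maps preserving the bracket, with the $\psi^{-1}\circ\psi$ cancellations on the nested Sweedler legs being the only delicate step. Nothing further is needed.
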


\begin{prop}\mlabel{prop:hopfheap3}
Let $B$ be a Rota-Baxter operator on the commutative Hopf heap $(C, \D, \chi)$. Then
$\tau^{y}_{x}\circ B$ is also a Rota-Baxter operator on the Hopf heap $(C, \D, \chi)$ if $\v\circ B=\v$ and $B(x)\in G(C)$, for all $x\in G(C)$.

Here $\tau^{y}_{x}:C\rightarrow C,c\mapsto [c,x,y]$ is a right $(x,y)$-translation, for any given $x,y\in G(C)$.

In particular, if $B$ is a coalgebra homomorphism, we know that $\tau^{y}_{x}\circ B$ is a Rota-Baxter operator on the Hopf heap $(C, \D, \chi)$.

\begin{proof} According to Lemma {\mref{lem:Hppro}}, we have
\begin{eqnarray*}
\tau^{y}_{x}(B([a,b,c]))
&=&[[B(a),B(b),B(c)],x,y]\\
&=&[[B(a),B(b),B(c)],[x,x,x],[y,y,y]]\\
&=&[[B(a),x,y],[B(b),x,y],[B(c),x,y]]\\
&=&[\tau^{y}_{x}(B(a)),\tau^{y}_{x}(B(b)),\tau^{y}_{x}(B(c))],
\end{eqnarray*}
for all $a,b,c\in C$, and obeying by the condition ``$\v\circ B=\v$ and $B(x)\in G(C)"$, we get that
\begin{eqnarray*}	&&[\tau^{y}_{x}(B(a))_{1},\tau^{y}_{x}(B(\tau^{y}_{x}(B(a))_{4})),\tau^{y}_{x}(B(\tau^{y}_{x}(B(a))_{2}))]\o \tau^{y}_{x}(B(a))_{3}\\	&=&[[B(a),x,y]_{1},\tau^{y}_{x}(B([B(a),x,y]_{4})),\tau^{y}_{x}(B([B(a),x,y]_{2}))]\o [B(a),x,y]_{3}\\	&=&[[B(a)_{1},x,y],\tau^{y}_{x}(B([B(a)_{4},x,y])),\tau^{y}_{x}(B([B(a)_{2},x,y]))]\o [B(a)_{3},x,y]\\	&=&[[B(a)_{1},x,y],\tau^{y}_{x}([B(B(a)_{4}),B(x),B(y)]),\tau^{y}_{x}([B(B(a)_{2}),B(x),B(y)])]\o [B(a)_{3},x,y]\\	&=&[[B(a)_{1},x,y],[[B(B(a)_{4}),B(x),B(y)],x,y],[[B(B(a)_{2}),B(x),B(y)],x,y]]\o [B(a)_{3},x,y]\\
	&=&[[B(a)_{1},[B(B(a)_{4}),B(x),B(y)],[B(B(a)_{2}),B(x),B(y)]],x,y]\o [B(a)_{3},x,y]\\	&=&[[[B(a)_{1},x,x],[B(B(a)_{4}),B(x),B(y)],[B(B(a)_{2}),B(x),B(y)]],x,y]\o [B(a)_{3},x,y]\\	&=&[[[B(a)_{1},B(B(a)_{4}),B(B(a)_{2})],[x,B(x),B(x)],[x,B(y),B(y)]],x,y]\o [B(a)_{3},x,y]\\
		&=&[[[B(a)_{1},B(B(a)_{4}),B(B(a)_{2})],\v(B(x))x,\v(B(y))x],x,y]\o [B(a)_{3},x,y]\\
	&=&[[B(a)_{1},B(B(a)_{4}),B(B(a)_{2})],x,y]\o [B(a)_{3},x,y]\\
	&=&[B(a_{1}),x,y]\o [B(a_{2}),x,y]\\
	&=& \tau^{y}_{x}(B(a_{1}))\o \tau^{y}_{x}(B(a_{2})).
\end{eqnarray*}

Thus $\tau^{y}_{x}\circ B$ is a Rota-Baxter operator on the Hopf heap $(C, \D, \chi)$.
\end{proof}

\end{prop}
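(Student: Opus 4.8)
The plan is to verify directly that the linear map $T:=\tau^y_x\circ B$, that is $T(c)=[B(c),x,y]$, satisfies the two conditions of Definition \mref{def:rbohopfheap}. Two elementary remarks will be used throughout. First, since $x,y\in G(C)$ and $\chi$ is a coalgebra map, $\tau^y_x$ is a coalgebra endomorphism of $C$; hence $T(c)_1\o T(c)_2=[B(c)_1,x,y]\o[B(c)_2,x,y]$, and more generally the $i$-th leg of the iterated coproduct of $T(c)$ is $[B(c)_i,x,y]$. Second, the hypotheses $\v\circ B=\v$ and $x,y\in G(C)$ force $B(x),B(y)\in G(C)$ with $\v(B(x))=\v(B(y))=1$, so that inside a heap bracket $B(x)$ and $B(y)$ behave exactly like the counital ``padding'' coming from $x$ and $y$.

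I would first dispose of the heap-homomorphism condition $T([a,b,c])=[T(a),T(b),T(c)]$. By the first Rota-Baxter axiom for $B$ we have $T([a,b,c])=[[B(a),B(b),B(c)],x,y]$; writing $x=[x,x,x]$ and $y=[y,y,y]$ (legitimate since $x,y$ are group-like) and applying the interchange identity of Lemma \mref{lem:Hppro} for the commutative Hopf heap $C$, this becomes $[[B(a),x,y],[B(b),x,y],[B(c),x,y]]=[T(a),T(b),T(c)]$. Equivalently, $\tau^y_x$ preserves the heap bracket of $C$ (this is shown inside the proof of Lemma \mref{lem:cohp}) and so does $B$, hence so does their composite $T$.

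The substantive part is the second condition $T(a_1)\o T(a_2)=[T(a)_1,T(T(a)_4),T(T(a)_2)]\o T(a)_3$. Put $b=B(a)$. Using the first remark and then $B([b_i,x,y])=[B(b_i),B(x),B(y)]$ (again the first Rota-Baxter axiom), one gets $T(T(a)_4)=[[B(b_4),B(x),B(y)],x,y]$ and $T(T(a)_2)=[[B(b_2),B(x),B(y)],x,y]$. The idea is then to simplify the left tensor factor $[T(a)_1,T(T(a)_4),T(T(a)_2)]$ by three successive applications of Lemma \mref{lem:Hppro}: pull the common outer bracket $[-,x,y]$ out of the three slots; rewrite the surviving factor $b_1$ as $[b_1,x,x]$ so that the resulting nested triple is a $3\times3$ heap array whose second and third ``rows'' are $[x,B(x),B(x)]$ and $[x,B(y),B(y)]$; and collapse these rows to $x$ using the counit axiom together with the second remark. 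What is left is exactly $[[B(a)_1,B(B(a)_4),B(B(a)_2)],x,y]=\tau^y_x\big([B(a)_1,B(B(a)_4),B(B(a)_2)]\big)$, so the full right-hand side equals $(\tau^y_x\o\tau^y_x)\big([B(a)_1,B(B(a)_4),B(B(a)_2)]\o B(a)_3\big)$. Applying the second Rota-Baxter axiom for $B$ collapses this to $(\tau^y_x\o\tau^y_x)(B(a_1)\o B(a_2))=T(a_1)\o T(a_2)$, which is the assertion. For the ``in particular'' clause it is enough to recall that a coalgebra homomorphism automatically satisfies $\v\circ B=\v$ and carries group-like elements to group-like ones, so both hypotheses are met.

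I expect the only real obstacle to be the bookkeeping in the second condition: tracking, through the repeated uses of Lemma \mref{lem:Hppro}, which slots of the deeply nested heap expressions carry the group-like padding $x,y,B(x),B(y)$ and which carry the live factors $b_1, B(B(a)_2), B(B(a)_4), b_3$, and inserting or deleting the counital factors at precisely the right stages so that the $3\times3$ array collapses cleanly to the single bracket above.
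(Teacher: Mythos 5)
Your proposal is correct and follows essentially the same route as the paper: both verify the two Rota--Baxter axioms directly, using that $\tau^y_x$ is a coalgebra map with group-like padding, the interchange identity of Lemma \ref{lem:Hppro} applied exactly as you describe (pulling out the common $[-,x,y]$, padding $B(a)_1$ as $[B(a)_1,x,x]$, and collapsing the rows $[x,B(x),B(x)]$ and $[x,B(y),B(y)]$ via $\v\circ B=\v$ and $B(x),B(y)\in G(C)$), and finally the second Rota--Baxter axiom for $B$. No gaps.
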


\begin{defn}\mlabel{def:rbohopfalg}\mcite{Zheng}
Let $(H,\mu,\eta, \D, \varepsilon, S)$ be a commutative Hopf algebra. An algebra map $B:H\rightarrow H$ is called a Rota-Baxter co-operator on $H$ if for all $x\in H,$
\begin{eqnarray}
B(x_{1})\o B(x_{2})=B(x)_{1}B(B(x)_{2}S(B(x)_{4}))\o B(x)_{3}.
\end{eqnarray}
By a Rota-Baxter commutative Hopf algebra we mean a pair $(H,B)$ of a commutative Hopf algebra and a Rota-Baxter co-operator $B$ on $H$.
\end{defn}

In the following, we build a connection between a Rota-Baxter operator on a Hopf heap and a Rota-Baxter co-operator on a Hopf algebra.

\begin{prop}\mlabel{prop:rbohopfheaphopfalg}
Let $H$ be a commutative Hopf algebra with antipode $S$. If $B$ is a Rota-Baxter co-operator on the Hopf algebra $H$ satisfying $S\circ B=B\circ S$, then $B$ is also a Rota-Baxter operator on the Hopf heap $Hp(H,[-,-,-])$.

Here the Hopf heap map $[-,-,-]$ as in Example \mref{exam:hopfheaphopfalg} is given by
$$[a,b,c]=aS(b)c,$$
for $a,b,c\in H$.

\begin{proof} We have to prove that $B$ is a Rota-Baxter operator on the Hopf heap $Hp(H,[-,-,-])$.

As a matter of facts, for all $a,b,c\in H$, we have
\begin{eqnarray*}
	B([a,b,c])&=&B(aS(b)c)=B(a)B(S(b))B(c)=B(a)S(B(b))B(c)\\
&=&[B(a),B(b),B(c)],\\
B(a_{1})\o B(a_{2})
	&=&B(a)_{1}B(B(a)_{2}S(B(a)_{4}))\o B(a)_{3}\\
	&=&B(a)_{1}B(S(B(a)_{4}))B(B(a)_{2})\o B(a)_{3}\\
	&=&B(a)_{1}S(B(B(a)_{4}))B(B(a)_{2})\o B(a)_{3}\\
	&=&[B(a)_{1},B(B(a)_{4}),B(B(a)_{2})]\o B(a)_{3}.
\end{eqnarray*}

Hence $B$ is a Rota-Baxter operator on the Hopf heap $Hp(H,[-,-,-])$.
\end{proof}

\end{prop}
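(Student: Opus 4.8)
The plan is to verify directly the two defining conditions of a Rota-Baxter operator on a Hopf heap (Definition \mref{def:rbohopfheap}) for the map $B$ acting on $Hp(H,[-,-,-])$, where $[a,b,c]=aS(b)c$. The ingredients at hand are: $B$ is an algebra endomorphism of $H$ (built into the notion of a Rota-Baxter co-operator, Definition \mref{def:rbohopfalg}), $H$ is commutative, the hypothesis $S\circ B=B\circ S$, and the Rota-Baxter co-operator identity $B(a_1)\o B(a_2)=B(a)_1\,B\big(B(a)_2 S(B(a)_4)\big)\o B(a)_3$.

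First I would establish the heap-homomorphism condition $B([a,b,c])=[B(a),B(b),B(c)]$. Since $B$ is multiplicative, $B([a,b,c])=B(aS(b)c)=B(a)\,B(S(b))\,B(c)$; applying $S\circ B=B\circ S$ converts $B(S(b))$ into $S(B(b))$, which gives $B(a)\,S(B(b))\,B(c)=[B(a),B(b),B(c)]$. This part is immediate and uses nothing about comultiplication.

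The substantive step is the second condition, $B(a_1)\o B(a_2)=[B(a)_1,B(B(a)_4),B(B(a)_2)]\o B(a)_3$. I would start from the right-hand side of the Rota-Baxter co-operator identity and rewrite the inner term $B\big(B(a)_2 S(B(a)_4)\big)$: multiplicativity of $B$ together with commutativity of $H$ give $B(B(a)_2)\,B(S(B(a)_4))$, and then $S\circ B=B\circ S$ turns this into $B(B(a)_2)\,S(B(B(a)_4))$. Commuting factors in the commutative algebra $H$ rewrites $B(a)_1\,B(B(a)_2)\,S(B(B(a)_4))\o B(a)_3$ as $B(a)_1\,S(B(B(a)_4))\,B(B(a)_2)\o B(a)_3$, which is exactly $[B(a)_1,B(B(a)_4),B(B(a)_2)]\o B(a)_3$ by the definition of the heap operation. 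The co-operator identity then identifies this expression with $B(a_1)\o B(a_2)$, which is precisely what is required.

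I do not expect a genuine obstacle: the whole argument is a short Sweedler-notation computation. The only point demanding care is matching the legs of $\D^{3}B(a)$ — the co-operator axiom attaches $S$ to the fourth leg and leaves the third as the free tensor factor, whereas the heap bracket $[a,b,c]=aS(b)c$ wants the arguments in the order (first, fourth, second) after reordering — but commutativity of $H$ makes every reordering legitimate, so everything aligns and $B$ is a Rota-Baxter operator on $Hp(H,[-,-,-])$.
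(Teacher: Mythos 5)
Your proposal is correct and follows essentially the same route as the paper's proof: the first condition via multiplicativity of $B$ and $S\circ B=B\circ S$, and the second by rewriting $B\big(B(a)_2 S(B(a)_4)\big)$ using multiplicativity, commutativity of $H$, and $S\circ B=B\circ S$ to match the heap bracket $[B(a)_1,B(B(a)_4),B(B(a)_2)]\o B(a)_3$. The only difference is cosmetic — you read the chain of equalities starting from the co-operator identity's right-hand side rather than from $B(a_1)\o B(a_2)$.
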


\begin{prop}\mlabel{prop:rbohopfalghopfheap}
Let $B$ be a Rota-Baxter operator on a commutative Hopf heap $(C, \D, \chi)$. If $B(x)=x$, for any $x\in G(C)$, then $B$ is a Rota-Baxter co-operator on the Hopf algebra $H_x(C)$.

Here the multiplication and antipode of the Hopf algebra $H_x(C)$ as in Example \mref{exam:hopfheaphopfalg} are given by
$$ab=[a,x,b], S(a)=[x,a,x].$$

\begin{proof}  Now we prove that $B$ is a Rota-Baxter co-operator on the Hopf algebra $H_{x}(C)$.

In fact, since for all $a,b\in C$,
$$ab=[a,x,b]=[b,x,a]=ba,$$
$$B(ab)=B([a,x,b])=[B(a),B(x), B(b)]=[B(a),x, B(b)]=B(a)B(b),$$
$H_{x}(C)$ is commutative, and $B$ an algebra map on $H_{x}(C)$.

Moreover, we have
\begin{eqnarray*}
	B(a_{1})\o B(a_{2})	
	&=&[B(a)_{1},B(B(a)_{4}),B(B(a)_{2})]\o B(a)_{3}\\
	&=&[B(a)_{1},B(B(a)_{4}),[x,x,B(B(a)_{2})]]\o B(a)_{3}\\
	&=&[[B(a)_{1},B(B(a)_{4}),x],x,B(B(a)_{2})]\o B(a)_{3}\\
	&=&[[[B(a)_{1},x,x],B(B(a)_{4}),x],x,B(B(a)_{2})]\o B(a)_{3}\\
	&=&[[B(a)_{1},x,[x,B(B(a)_{4}),x]],x,B(B(a)_{2})]\o B(a)_{3}\\
	&=&[B(a)_{1},x,[\underbrace{[x,B(B(a)_{4}),x],x,B(B(a)_{2})]}]\o B(a)_{3}\\
	&=&[B(a)_{1},x,[B(B(a)_{2}),x,[x,B(B(a)_{4}),x]]]\o B(a)_{3}\\
	&=&[[B(a)_{1},x,B(B(a)_{2})],x,[\underbrace{x,B(B(a)_{4}),x}]]\o B(a)_{3}\\
	&=&[[B(a)_{1},x,B(B(a)_{2})],x,[B(x),B(B(a)_{4}),B(x)]]\o B(a)_{3}\\
	&=&[[B(a)_{1},x,B(B(a)_{2})],x,B([x,B(a)_{4},x])]\o B(a)_{3}\\
	&=&B(a)_{1}B(B(a)_{2})B([x,B(a)_{4},x])\o B(a)_{3}\\
	&=&B(a)_{1}B(B(a)_{2})B(S(B(a)_{4}))\o B(a)_{3}\\
	&=&B(a)_{1}B(B(a)_{2}S(B(a)_{4}))\o B(a)_{3}.
\end{eqnarray*}

Hence $B$ is a Rota-Baxter co-operator on the Hopf algebra $H_{x}(C)$.
\end{proof}

\end{prop}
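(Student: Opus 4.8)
The plan is to verify the three requirements of Definition \mref{def:rbohopfalg}: that $H_x(C)$ is a commutative Hopf algebra, that $B$ is an algebra endomorphism of it, and that $B$ satisfies the defining co-operator identity.

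First I would record that $H_x(C)$ is commutative: by Example \mref{exam:hopfheaphopfalg} its product is $ab=[a,x,b]$, and commutativity of the Hopf heap gives $[a,x,b]=[b,x,a]$, so $ab=ba$. Next, the first Rota-Baxter axiom $B([a,b,c])=[B(a),B(b),B(c)]$ together with the hypothesis $B(x)=x$ yields $B(ab)=B([a,x,b])=[B(a),B(x),B(b)]=[B(a),x,B(b)]=B(a)B(b)$, and $B(1_{H_x(C)})=B(x)=x=1_{H_x(C)}$; hence $B$ is an algebra map on $H_x(C)$. Both of these steps are immediate.

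The real content is the co-operator identity. Starting from the second Rota-Baxter axiom for the Hopf heap, $B(a_1)\o B(a_2)=[B(a)_1,B(B(a)_4),B(B(a)_2)]\o B(a)_3$, I must rewrite the first tensor factor as $B(a)_1\,B\!\big(B(a)_2\,S(B(a)_4)\big)$ computed inside $H_x(C)$, where $S(c)=[x,c,x]$. The rewriting proceeds by inserting and removing copies of the group-like element $x$ via the unit identities $[a,x,x]=a=[x,x,a]$, by reshuffling brackets with the commutative form of heap associativity $[[a,b,c],d,h]=[a,[b,c,d],h]=[a,b,[c,d,h]]$, and by swapping outer arguments at one stage via heap commutativity $[p,q,r]=[r,q,p]$. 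The key observation is that $[x,B(B(a)_4),x]=[B(x),B(B(a)_4),B(x)]=B([x,B(a)_4,x])=B(S(B(a)_4))$, which turns a bracket of Sweedler legs into $B$ applied to an $H_x(C)$-antipode; combining this with the algebra-map property ($B(u)B(v)=B(uv)$ in $H_x(C)$) collapses the expression to $B(a)_1 B(B(a)_2)B(S(B(a)_4))=B(a)_1 B\big(B(a)_2 S(B(a)_4)\big)$, which is exactly the right-hand side of the co-operator identity.

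I expect the main obstacle to be the bookkeeping in this last step: one must track the five Sweedler components of $B(a)$ — noting which of them carry an extra application of $B$ — and choose the order of the elementary moves (insert $x$, re-associate, commute, recognize an antipode) so that the bracket expression telescopes cleanly. No structural input beyond commutativity of the heap and $B(x)=x$ enters; once $B$ is known to be an algebra map, the identity is just a finite chain of heap rewrites.
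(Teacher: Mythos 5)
Your proposal is correct and follows essentially the same route as the paper: commutativity and the algebra-map property are immediate from $ab=[a,x,b]$ and $B(x)=x$, and the co-operator identity is obtained by exactly the chain of $x$-insertions, heap re-associations, one commutativity swap, and the pivotal rewriting $[x,B(B(a)_{4}),x]=[B(x),B(B(a)_{4}),B(x)]=B([x,B(a)_{4},x])=B(S(B(a)_{4}))$ that the paper uses. No gaps.
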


\begin{remark}\mlabel{remark:rbohopfalghopfheap1}
\begin{enumerate}
\item Let $(H, B)$ be a Rota-Baxter cocommutative Hopf algebra with antipode $S$, and $S\circ B=B\circ S$. Then, we easily prove that $B^{\circ}$ is a Rota-Baxter co-operator on the commutative Hopf algebra $H^{\circ}$ with antipode $S^{\circ}$, such that $S^{\circ}\circ B^{\circ}=B^{\circ}\circ S^{\circ}$.

By Proposition \mref {prop:rbohopfheaphopfalg}, $B^{\circ}$ is also a Rota-Baxter operator on the Hopf heap $Hp(H^{\circ},[-,-,-]_{\circ})$, where the Hopf heap map is given by $[\alpha,\beta,\gamma]_{\circ}=\alpha S^{\circ}(\beta)\gamma$, for $\alpha, \beta, \gamma\in H^{\circ}$.

 \item Let $B$ be a Rota-Baxter operator on a commutative Hopf heap $(C, \D, \chi)$, and $B(x)=x$ for any $x\in G(C)$. Then, according to Proposition \mref {prop:rbohopfalghopfheap}, $B$ is a Rota-Baxter co-operator on the Hopf algebra $H_x(C)$. So, $B^{\circ}$ is a Rota-Baxter operator on the Hopf algebra $H_x(C)^{\circ}$.
\end{enumerate}
\end{remark}

\subsection{Descendent Hopf heaps}

In this subsection, we construct a Hopf heap structure by a Rota-Baxter operator on commutative Hopf heap.

\delete{\begin{prop}\mlabel{prop:rbohopfheapcoalg}
Let $B$ be a Rota-Baxter operator on commutative Hopf heap $(C, \D, \chi)$. Define
$$\D^{'}(a)=a_{1^{'}}\o a_{2^{'}}=[a_{1}, B(a_{4}), B(a_{2})]\o a_{3},$$
for all $a\in C$. Then $(C, \D^{'}, \v)$ is a coalgebra if $B$ is an invertible map such that $\varepsilon\circ B=\varepsilon$.
\end{prop}

\begin{proof} Because $B$ is a Rota-Baxter operator on commutative Hopf heap, we get $\D^{'}B=(B\o B)\D$.

For any $a\in C$, we have
\begin{eqnarray*}
	a_{1^{'}}\varepsilon(a_{2^{'}})
	&=& [a_{1}, B(a_{3}), B(a_{2})]=[B(B^{-1}(a_{1})), B(a_{3}), B(a_{2})]\\
	&=& B([B^{-1}(a_{1}), a_{3}, a_{2}])=B([a_{2}, a_{3}, ,B^{-1}(a_{1})])\\
    &=& B(B^{-1}(a_{1}\varepsilon(a_{2})))=a,
\end{eqnarray*}
and
\begin{eqnarray*}
	\varepsilon(a_{1^{'}})a_{2^{'}}
	&=& \varepsilon([a_{1}, B(a_{4}), B(a_{2})])a_{3}=\varepsilon(a_{1})\varepsilon(B(a_{4}))\varepsilon(B(a_{2}))a_{3}\\
	&=&\varepsilon(a_{1})\varepsilon(a_{4})\varepsilon(a_{2})a_{3}=a.
\end{eqnarray*}
In addition,
\begin{eqnarray*}
	(\D^{'}\o I)\D^{'}(a)
	&=& (\D^{'}\o I)([a_{1}, B(a_{4}), B(a_{2})]\o a_{3})\\
	&=& \D^{'}([a_{1}, B(a_{4}), B(a_{2})])\o a_{3}\\
	&=& \D^{'}\underbrace{[B(B^{-1}(a_{1})), B(a_{4}), B(a_{2})]}\o a_{3}\\
	&=& \underbrace{\D^{'}B}([B^{-1}(a_{1}), a_{4}, a_{2}])\o a_{3}\\
	&=& (B\o B)\D([B^{-1}(a_{1}), a_{4}, a_{2}])\o a_{3}\\
	&=& B([B^{-1}(a_{1}), a_{4}, a_{2}]_{1})\o B([B^{-1}(a_{1}), a_{4}, a_{2}]_{2})\o a_{3}\\
	&=& B([B^{-1}(a_{1})_{1}, a_{42}, a_{21}])\o B([B^{-1}(a_{1})_{2}, a_{41}, a_{22}])\o a_{3}\\
	&=& B([B^{-1}(a_{1})_{1}, a_{6}, a_{2}])\o B([B^{-1}(a_{1})_{2}, a_{5}, a_{3}])\o a_{4}\\
	&=& [\underbrace{B(B^{-1}(a_{1})_{1})}, B(a_{6}), B(a_{2})]\o [\underbrace{B(B^{-1}(a_{1})_{2})}, B(a_{5}), B(a_{3})]\o a_{4}\\
	&=& [[B(B^{-1}(a_{1}))_{1},B(B(B^{-1}(a_{1}))_{4}),B(B(B^{-1}(a_{1}))_{2})], B(a_{6}), B(a_{2})]\\
	&&\o [B(B^{-1}(a_{1}))_{3}, B(a_{5}), B(a_{3})]\o a_{4}\\
	&=& [[a_{11},B(a_{14}),B(a_{12})], B(a_{6}), B(a_{2})]\o [a_{13}, B(a_{5}), B(a_{3})]\o a_{4}\\
	&=& [[a_{1},B(a_{4}),B(a_{2})], B(a_{9}), B(a_{5})]\o [a_{3}, B(a_{8}), B(a_{6})]\o a_{7}\\
	&=& [[a_{1},B(a_{4}),\underbrace{[B(a_{2}), B(a_{9}), B(a_{5})]}]\o [a_{3}, B(a_{8}), B(a_{6})]\o a_{7}\\
	&=& [[a_{1},B(a_{4}),[B(a_{5}), B(a_{9}), B(a_{2})]]\o [a_{3}, B(a_{8}), B(a_{6})]\o a_{7}\\
	&=& [\underbrace{[a_{1},B(a_{4}),B(a_{5})]}, B(a_{9}), B(a_{2})])\o [a_{3}, B(a_{8}), B(a_{6})]\o a_{7}\\
	&=& [\underbrace{B([B^{-1}(a_{1}),a_{4},a_{5}])}, B(a_{8}), B(a_{2})])\o [a_{3}, B(a_{7}), B(a_{5})]\o a_{6}\\
	&=& [B(B^{-1}(a_{1})), B(a_{7}), B(a_{2})]\o [a_{3}, B(a_{6}), B(a_{4})]\o a_{5}\\
	&=& [a_{1}, B(a_{7}), B(a_{2})]\o [a_{3}, B(a_{6}), B(a_{4})]\o a_{5}\\
	&=&(I\o \D^{'})([a_{1}, B(a_{4}), B(a_{2})]\o a_{3})\\
    &=&(I\o \D^{'})\D^{'}(a).
\end{eqnarray*}
Hence $(C, \D^{'}, \v)$ is a coalgebra.
\end{proof}

\begin{theorem}\mlabel{descendenthopfalg}
	
	$C_B=(C, \D^{'}, \chi)$ is a Hopf heap.
	
\end{theorem}

\begin{proof} We have to check that $\chi$ is a coalgebra homomorphism with respect to $\D^{'}$.
	
	For any $a,b,c\in C$, we have
	\begin{eqnarray*}
		&&\D^{'}[a,b,c]\\
		&=&\D^{'}B[B^{-1}(a),B^{-1}(b),B^{-1}(c)]\\
		&=&(B\o B)\D[B^{-1}(a),B^{-1}(b),B^{-1}(c)]\\
		&=&B([B^{-1}(a)_{1},B^{-1}(b)_{2},B^{-1}(c)_{1}])\o B([B^{-1}(a)_{2},B^{-1}(b)_{1},B^{-1}(c)_{2}])\\
		&=&[B(B^{-1}(a)_{1}),B(B^{-1}(b)_{2}),B(B^{-1}(c)_{1})])\o [B(B^{-1}(a)_{2}),B(B^{-1}(b)_{1}),B(B^{-1}(c)_{2})])\\
		&=&(\chi\o\chi)(B(B^{-1}(a)_{1})\o B(B^{-1}(b)_{2})\o B(B^{-1}(c)_{1})\o B(B^{-1}(a)_{2})\o B(B^{-1}(b)_{1})\o B(B^{-1}(c)_{2}))\\
		&=&(\chi\o\chi)(I\o I\o T\o I\o I)(I\o T\o T\o I)\\
		&&(B(B^{-1}(a)_{1})\o  B(B^{-1}(a)_{2})\o B(B^{-1}(b)_{2})\o B(B^{-1}(b)_{1})\o  B(B^{-1}(c)_{1})\o B(B^{-1}(c)_{2}))\\
		&=&(\chi\o\chi)(I\o I\o T\o I\o I)(I\o T\o T\o I)\\
		&&(B\o B\o B\o B\o B\o B)(I\o I\o T\o I\o I)(\D\o \D\o \D)(B^{-1}(a)\o B^{-1}(b)\o  B^{-1}(c))\\
		&=&(\chi\o\chi)(I\o I\o T\o I\o I)(I\o T\o T\o I)\\
		&&(I\o I\o T\o I\o I)(B\o B\o B\o B\o B\o B)(\D\o \D\o \D)(B^{-1}(a)\o B^{-1}(b)\o  B^{-1}(c))\\
		&=&(\chi\o\chi)(I\o I\o T\o I\o I)(I\o T\o T\o I)\\
		&&(I\o I\o T\o I\o I)((B\o B)\circ\D\o (B\o B)\circ\D\o (B\o B)\circ\D)(B^{-1}(a)\o B^{-1}(b)\o  B^{-1}(c))\\
		&=&(\chi\o\chi)(I\o I\o T\o I\o I)(I\o T\o T\o I)\\
		&&(I\o I\o T\o I\o I)(\D^{'}\circ B\o \D^{'}\circ B\o \D^{'}\circ B)(B^{-1}(a)\o B^{-1}(b)\o  B^{-1}(c))\\
		&=&(\chi\o\chi)(I\o I\o T\o I\o I)(I\o T\o T\o I)\\
		&&(I\o I\o T\o I\o I)(\D^{'}\o \D^{'}\o \D^{'})(B\o B\o B)(B^{-1}(a)\o B^{-1}(b)\o  B^{-1}(c))\\
		&=&(\chi\o\chi)(I\o I\o T\o I\o I)(I\o T\o T\o I)(\D^{'}\o \D^{'co}\o \D^{'})(a\o b\o c)\\
		&=&[a_{1^{'}},b_{2^{'}},c_{1^{'}}]\o [a_{2^{'}},b_{1^{'}},c_{2^{'}}].
	\end{eqnarray*}
	In addition,
	\begin{eqnarray*}
		[a_{1^{'}},a_{2^{'}},b] &=& [B(B^{-1}(a))_{1^{'}},B(B^{-1}(a))_{2^{'}},b]\\
		&=& [B(B^{-1}(a)_{1}),B(B^{-1}(a)_{2}),b]\\
		&=& B([B^{-1}(a)_{1},B^{-1}(a)_{2},B^{-1}(b)])\\
		&=& \varepsilon(B^{-1}(a))B(B^{-1}(b))\\
		&=& \varepsilon(a)b,
	\end{eqnarray*}
	and
	\begin{eqnarray*}
		[b,a_{1^{'}},a_{2^{'}}] &=& [b,B(B^{-1}(a))_{1^{'}},B(B^{-1}(a))_{2^{'}}]\\
		&=& [b,B(B^{-1}(a)_{1}),B(B^{-1}(a)_{2})]\\
		&=& B([B^{-1}(b),B^{-1}(a)_{1},B^{-1}(a)_{2}])\\
		&=& \varepsilon(B^{-1}(a))B(B^{-1}(b))\\
		&=& \varepsilon(a)b.
	\end{eqnarray*}
	Hence, $(C, \D^{'}, \chi)$ is a Hopf heap.
\end{proof}
}

\begin{prop}\mlabel{prop:rbohopfheapcoalg}
	Let $B$ be a Rota-Baxter operator on commutative Hopf heap $(C, \D, \chi)$. Define
	$$\D^{'}(a)=a_{1^{'}}\o a_{2^{'}}=[a_{1}, B(a_{4}), B(a_{2})]\o a_{3},$$
	for all $a\in C$. Then $(C, \D^{'}, \v)$ is a coalgebra if $B$ is a surjective map such that  $\varepsilon\circ B=\varepsilon$.
\end{prop}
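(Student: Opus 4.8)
The plan is to sidestep the lengthy manipulation used in the invertible case by extracting the one structural fact hidden in the definition of a Rota-Baxter operator: the identity
$$B(a_{1})\o B(a_{2})=[B(a)_{1},B(B(a)_{4}),B(B(a)_{2})]\o B(a)_{3}$$
from Definition~\mref{def:rbohopfheap} says exactly that $B$ intertwines $\D$ and $\D'$. Indeed, putting $c=B(a)$, the right-hand side is $[c_{1},B(c_{4}),B(c_{2})]\o c_{3}=\D'(c)=\D'(B(a))$, while the left-hand side is $(B\o B)\D(a)$. So the first thing I would record is the identity
$$\D'\circ B=(B\o B)\circ\D .$$

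Next, since $B$ is surjective, every element of $C$ is of the form $B(a)$ for some $a\in C$; such an $a$ need not be unique, but the formula for $\D'$ produces a well-defined element depending only on the value $B(a)$, so this causes no ambiguity. To verify that $\D'$ is coassociative it is therefore enough to evaluate $(\D'\o\mathrm{id})\D'$ and $(\mathrm{id}\o\D')\D'$ on $B(a)$. Applying the intertwining identity twice,
$$(\D'\o\mathrm{id})\D'(B(a))=(\D'\o\mathrm{id})(B\o B)\D(a)=(B\o B\o B)(\D\o\mathrm{id})\D(a),$$
$$(\mathrm{id}\o\D')\D'(B(a))=(\mathrm{id}\o\D')(B\o B)\D(a)=(B\o B\o B)(\mathrm{id}\o\D)\D(a),$$
and the two right-hand sides coincide because $\D$ is coassociative. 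Hence $\D'$ is coassociative on all of $C$.

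For the counit axioms I would again evaluate on $B(a)$, write $\D(a)=a_{1}\o a_{2}$, and use the hypothesis $\v\circ B=\v$:
$$(\v\o\mathrm{id})\D'(B(a))=\v(B(a_{1}))B(a_{2})=\v(a_{1})B(a_{2})=B(a),$$
$$(\mathrm{id}\o\v)\D'(B(a))=B(a_{1})\v(B(a_{2}))=B(a_{1})\v(a_{2})=B(a),$$
so $\v$ is a counit for $\D'$. Together with the previous paragraph this establishes that $(C,\D',\v)$ is a coalgebra.

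The crux — and really the only non-mechanical step — is the initial reinterpretation of the Rota-Baxter identity as $\D'\circ B=(B\o B)\circ\D$; once that is in place, surjectivity of $B$ transports coassociativity and counitality directly from $\D$, so there is no substantive computational obstacle (in contrast to the deleted invertible-$B$ argument, which manipulated $B^{-1}$ explicitly). The only hypotheses doing real work are the surjectivity of $B$, used to reach every element of $C$, and $\v\circ B=\v$, used for the counit; commutativity of the Hopf heap, although part of the standing assumptions, is not needed in this argument.
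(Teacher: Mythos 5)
Your proof is correct and takes essentially the same route as the paper's: both hinge on reading the Rota--Baxter identity as $\D'\circ B=(B\o B)\circ\D$ and then using surjectivity of $B$ to transport coassociativity and the counit axioms from $\D$ to $\D'$ (your observation that commutativity is not actually used here is consistent with the paper's argument as well). The only cosmetic difference is that the paper checks the axiom $\v(a_{1'})a_{2'}=a$ directly on an arbitrary element, whereas you evaluate both counit axioms on elements of the form $B(a)$; both are valid.
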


\begin{proof} Because $B$ is a Rota-Baxter operator on commutative Hopf heap, we get $\D^{'}B=(B\o B)\D$. Since $B$ is a surjective map, there is an element $\upsilon\in C$ with $a=B(\upsilon)$, for any $a\in C$.

Next, we verify that
	\begin{eqnarray*}
		a_{1^{'}}\varepsilon(a_{2^{'}})
		&=& [a_{1}, B(a_{3}), B(a_{2})]\\
        &=&[B(\upsilon)_{1}, B(B(\upsilon)_{3}), B(B(\upsilon)_{2})]\\
		&=& (\textnormal{id}\o \v)([B(\upsilon)_{1}, B(B(\upsilon)_{4}),B(B(\upsilon)_{2})]\o B(\upsilon)_{3})\\
		&=& (\textnormal{id}\o \v)(B(\upsilon_{1})\o B(\upsilon_{2}))\\
        &=&B(\upsilon)\\
        &=&a,\\
			\varepsilon(a_{1^{'}})a_{2^{'}}
		&=& \varepsilon([a_{1}, B(a_{4}), B(a_{2})])a_{3}\\
        &=&\varepsilon(a_{1})\varepsilon(B(a_{4}))\varepsilon(B(a_{2}))a_{3}\\
		&=&\varepsilon(a_{1})\varepsilon(a_{4})\varepsilon(a_{2})a_{3}\\
        &=&a,\\	
		(\D^{'}\o \textnormal{id})\D^{'}(a)
		&=& (\D^{'}\o \textnormal{id})\D^{'}(B(\upsilon))\\
        &=&(\D^{'}\o \textnormal{id})(B(\upsilon_{1})\o B(\upsilon_{2}))\\
        &=& \D^{'}B(\upsilon_{1})\o B(\upsilon_{2})\\		
		&=& B(\upsilon_{11})\o B(\upsilon_{12})\o B(\upsilon_{2})\\
        &=& B(\upsilon_{1})\o B(\upsilon_{21})\o B(\upsilon_{22})\\
        &=& (\textnormal{id}\o (B\o B)\circ \D)(B(\upsilon_{1})\o \upsilon_{2})\\	
		&=& (\textnormal{id}\o \D^{'}\circ B)(B(\upsilon_{1})\o \upsilon_{2})\\
        &=& (\textnormal{id}\o \D^{'})(B(\upsilon_{1})\o B(\upsilon_{2}))\\
        &=& (\textnormal{id}\o \D^{'})(B\o B)\D(\upsilon)\\			
        &=& (\textnormal{id}\o \D^{'})\D^{'}B(\upsilon)\\
        &=&(\textnormal{id}\o \D^{'})\D^{'}(a).
	\end{eqnarray*}
	
Hence $(C, \D^{'}, \v)$ is a coalgebra.
\end{proof}

\begin{theorem}\mlabel{descendenthopfalg}

 $C_B=(C, \D^{'}, \chi)$ given in Proposition \mref{prop:rbohopfheapcoalg} is a Hopf heap.

\end{theorem}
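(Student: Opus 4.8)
The plan is to exploit the fact, established in the proof of Proposition~\ref{prop:rbohopfheapcoalg}, that $B$ is a \emph{surjective} coalgebra map from $(C,\D)$ to $(C,\D')$, i.e. $\D'\circ B=(B\o B)\circ\D$ and $\v\circ B=\v$, and that $B$ simultaneously intertwines the heap operation, $B([a,b,c])=[B(a),B(b),B(c)]$, by Definition~\ref{def:rbohopfheap}. In other words, $B\colon(C,\D,\chi)\to(C,\D',\chi)$ is a surjective map respecting both the coalgebra and the ternary structure, and the idea is to transport the Hopf heap axioms of the source along it.

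First observe that two of the required conditions need no work at all: the associativity axiom $[[a,b,c],d,h]=[a,b,[c,d,h]]$ and the identity $\v([a,b,c])=\v(a)\v(b)\v(c)$ involve only $\chi$ and $\v$, neither of which has changed, so they hold in $C_B$ precisely because they hold in $(C,\D,\chi)$. It therefore remains to verify (i) that $\chi$ is a coalgebra homomorphism for $\D'$, concretely
$$\D'([a,b,c])=[a_{1'},b_{2'},c_{1'}]\o [a_{2'},b_{1'},c_{2'}],$$
and (ii) the counit-type axioms $[a_{1'},a_{2'},b]=[b,a_{1'},a_{2'}]=\v(a)b$.

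For (i), pick by surjectivity $\upsilon,\omega,\xi\in C$ with $a=B(\upsilon)$, $b=B(\omega)$, $c=B(\xi)$. The key bookkeeping point is that $a_{1'}\o a_{2'}=\D'(B(\upsilon))=(B\o B)\D(\upsilon)=B(\upsilon_1)\o B(\upsilon_2)$, and likewise for $b$ and $c$, so every $\D'$-Sweedler leg is a $B$-image of a $\D$-Sweedler leg. Then, using that $B$ intertwines $\chi$, that $\D'B=(B\o B)\D$ again on $[\upsilon,\omega,\xi]$, and that $(C,\D,\chi)$ is already a Hopf heap (so $\D([\upsilon,\omega,\xi])=[\upsilon_1,\omega_2,\xi_1]\o [\upsilon_2,\omega_1,\xi_2]$), one computes
$$\D'([a,b,c])=\D'B([\upsilon,\omega,\xi])=(B\o B)\D([\upsilon,\omega,\xi])=[B(\upsilon_1),B(\omega_2),B(\xi_1)]\o [B(\upsilon_2),B(\omega_1),B(\xi_2)],$$
which is exactly $[a_{1'},b_{2'},c_{1'}]\o [a_{2'},b_{1'},c_{2'}]$. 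For (ii), with $a=B(\upsilon)$ and $b=B(\omega)$ we get $[a_{1'},a_{2'},b]=[B(\upsilon_1),B(\upsilon_2),B(\omega)]=B([\upsilon_1,\upsilon_2,\omega])=B(\v(\upsilon)\omega)=\v(\upsilon)B(\omega)=\v(B(\upsilon))b=\v(a)b$, using $\v\circ B=\v$; the identity $[b,a_{1'},a_{2'}]=\v(a)b$ follows the same way from the other counit axiom of $(C,\D,\chi)$.

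The only point requiring a little care — and where I expect to spend the most words — is the juggling of the two Sweedler conventions for $\D$ and $\D'$ together with the choice of preimages: one must note that although $\upsilon,\omega,\xi$ are not unique, every final expression is rewritten purely in terms of $a,b,c$ via $B(\upsilon_1)\o B(\upsilon_2)=a_{1'}\o a_{2'}$, so the verification is independent of these choices. No further structural input is needed; in particular, parallel to the surjective-versus-invertible weakening already made in Proposition~\ref{prop:rbohopfheapcoalg}, injectivity of $B$ is never used.
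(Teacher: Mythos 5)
Your proposal is correct and follows essentially the same route as the paper's proof: use surjectivity of $B$ to write $a=B(\upsilon)$, etc., exploit $\D'\circ B=(B\o B)\circ\D$ and $B([x,y,z])=[B(x),B(y),B(z)]$ to reduce both the counit axioms and the coalgebra-map property of $\chi$ to the corresponding axioms in $(C,\D,\chi)$. Your explicit remark that the verification is independent of the choice of preimages is a small point the paper leaves implicit, but the argument is the same.
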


\begin{proof} Because $B$ is a surjective map, there are $f, g,h\in C$ with $a=B(f), b=B(g), c=B(h)$, for any $a,b,c\in C$, respectively.

First, we prove that
	\begin{eqnarray*}
		[a_{1^{'}},a_{2^{'}},b] &=& [B(f)_{1^{'}},B(f)_{2^{'}},b]= [B(f_{1}),B(f_{2}),b]= B([f_{1}, f_{2}, g])\\
		&=& \varepsilon(f)B(g)= \varepsilon(B(f))b= \varepsilon(a)b,
	\end{eqnarray*}
	and
	\begin{eqnarray*}
		[b,a_{1^{'}},a_{2^{'}}] &=& [b,B(f)_{1^{'}},B(f)_{2^{'}}]= [b,B(f_{1}),B(f_{2})]= B([g, f_{1}, f_{2}])\\
		&=& \varepsilon(f)B(g)= \varepsilon(B(f))b= \varepsilon(a)b.
	\end{eqnarray*}
	In addition, we have to check that $\chi$ is a coalgebra homomorphism with respect to $\D^{'}$.
\begin{eqnarray*}
	\D^{'}[a,b,c]
	&=&\D^{'}B[f,g,h]\\
	&=&(B\o B)\D[f,g,h]\\
	&=&B([f_{1},g_{2},h_{1}])\o B([f_{2},g_{1},h_{2}])\\
	&=&[B(f_{1}),B(g_{2}),B(h_{1})]\o [B(f_{2}),B(g_{1}),B(h_{2})]\\
	&=&(\chi\o\chi)(B(f_{1})\o B(g_{2})\o B(h_{1})\o B(f_{2})\o B(g_{1})\o B(h_{2}))\\
	&=&(\chi\o\chi)(\textnormal{id}\o \textnormal{id}\o T\o \textnormal{id}\o \textnormal{id})(\textnormal{id}\o T\o T\o \textnormal{id})(\textnormal{id}\o \textnormal{id}\o T\o \textnormal{id}\o \textnormal{id})\\
	&& (B(f_{1})\o  B(f_{2})\o  B(g_{1})\o B(g_{2})\o  B(h_{1})\o B(h_{2}))\\
	&=&(\chi\o\chi)(\textnormal{id}\o \textnormal{id}\o T\o \textnormal{id}\o \textnormal{id})(\textnormal{id}\o T\o T\o \textnormal{id})(\textnormal{id}\o \textnormal{id}\o T\o \textnormal{id}\o \textnormal{id})\\
	&&(B\o B\o B\o B\o B\o B)(\D\o \D\o \D)(f\o g\o h)\\
	&=&(\chi\o\chi)(\textnormal{id}\o \textnormal{id}\o T\o \textnormal{id}\o \textnormal{id})(\textnormal{id}\o T\o T\o \textnormal{id})(\textnormal{id}\o \textnormal{id}\o T\o \textnormal{id}\o \textnormal{id})\\
    &&((B\o B)\circ\D\o (B\o B)\circ\D\o (B\o B)\circ\D)(f\o g\o  h)\\
    &=&(\chi\o\chi)(\textnormal{id}\o \textnormal{id}\o T\o \textnormal{id}\o \textnormal{id})(\textnormal{id}\o T\o T\o \textnormal{id})(\textnormal{id}\o \textnormal{id}\o T\o \textnormal{id}\o \textnormal{id})\\
    &&(\D^{'}\circ B\o \D^{'}\circ B\o \D^{'}\circ B)(f\o g\o h)\\
	&=&(\chi\o\chi)(\textnormal{id}\o \textnormal{id}\o T\o \textnormal{id}\o \textnormal{id})(\textnormal{id}\o T\o T\o \textnormal{id})(\textnormal{id}\o \textnormal{id}\o T\o \textnormal{id}\o \textnormal{id})\\
	&&(\D^{'}\o \D^{'}\o \D^{'})(B\o B\o B)(f\o g\o h)\\
	&=&(\chi\o\chi)(\textnormal{id}\o \textnormal{id}\o T\o \textnormal{id}\o \textnormal{id})(\textnormal{id}\o T\o T\o \textnormal{id})(\D^{'}\o \D^{'co}\o \D^{'})(a\o b\o c)\\
&=&[a_{1^{'}},b_{2^{'}},c_{1^{'}}]\o [a_{2^{'}},b_{1^{'}},c_{2^{'}}].
\end{eqnarray*}
Hence, $(C, \D^{'}, \chi)$ is a Hopf heap.
\end{proof}

\begin{defn}\mlabel{def:rbohopfalg}
The Hopf heap $C_{B}$ given in Theorem \mref{descendenthopfalg} is called the descendent Hopf heap.
\end{defn}
%\begin{exam}
%	Let $C$ be a vector space with basis $\{u,\theta\}$. Define
%	$$
%	\D(u)=u\o \theta+ \theta \o u, ~~\D(\theta)=\theta\o \theta-u\o u,
%	$$
%	$$\v(u)=0, ~~\v(\theta)=1.
%	$$
%	$$
%	[u,u,u]=-u, [\theta,\theta,\theta]=\theta, ~\text{else is zero}.
%	$$
	%
%	By Example \mref{exam:rbohopfheap5}(1), $B(u)=-u, B(\theta)=\theta$ is a 	Rota-Bater operator on the $(C,\D,\chi)$ such that $B$ is an invertible map and $\varepsilon\circ B=\varepsilon$, then define
%	$$
%	\D^{'}(u)=[\theta, \theta, \theta ]\o u+ [-u, -u,-u]\o \theta=\theta\o u+u\o \theta
%	$$
%	$$
%	\D^{'}(\theta)=[\theta, \theta, \theta ]\o \theta+ [u, -u, -u]\o u=\theta\o\theta-u\o u
%	$$
%\end{exam}

\begin{prop}
$B$ is a homomorphism from a Hopf heap $C$ to the descendent Hopf heap $C_{B}$, and also a Rota-Baxter operator on $C_{B}$.
\end{prop}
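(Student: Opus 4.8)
The plan is to read off both assertions almost by inspection from the two defining identities of a Rota-Baxter operator on $C$ (Definition \mref{def:rbohopfheap}) together with the formula $\D^{'}(a)=[a_1,B(a_4),B(a_2)]\o a_3$ for the descendent comultiplication.

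First I would check that $B\colon C\to C_B$ is a Hopf heap homomorphism. Recall a Hopf heap homomorphism is a coalgebra map that preserves $[-,-,-]$. Since $C_B=(C,\D^{'},\chi)$ retains the counit $\v$ and the heap operation $\chi=[-,-,-]$ of $C$, this amounts to three things: $\v\circ B=\v$, which is among the standing hypotheses of Proposition \mref{prop:rbohopfheapcoalg}; $B([a,b,c])=[B(a),B(b),B(c)]$, which is the first Rota-Baxter identity; and $\D^{'}\circ B=(B\o B)\circ\D$, which is exactly the second Rota-Baxter identity, since evaluating the definition of $\D^{'}$ at $B(a)$ rewrites its right-hand side $[B(a)_1,B(B(a)_4),B(B(a)_2)]\o B(a)_3$ as $\D^{'}(B(a))$, whereas its left-hand side is $(B\o B)\D(a)$. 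That settles the first assertion.

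Next I would verify that $B$ is a Rota-Baxter operator on $C_B$; note that the Hopf heap $C_B$ (Theorem \mref{descendenthopfalg}) inherits commutativity from $C$, so the two identities of Definition \mref{def:rbohopfheap} are indeed the right requirements, now taken with $\D$ replaced by $\D^{'}$ and $\chi$ unchanged. The first identity, $B([a,b,c])=[B(a),B(b),B(c)]$, is the same equation as on $C$ and therefore already holds. For the second I must show $B(a_{1^{'}})\o B(a_{2^{'}})=[B(a)_{1^{'}},B(B(a)_{4^{'}}),B(B(a)_{2^{'}})]\o B(a)_{3^{'}}$, the primed legs denoting iterates of $\D^{'}$. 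On the left, $\D^{'}(a)=[a_1,B(a_4),B(a_2)]\o a_3$, so applying $B\o B$ and then the first Rota-Baxter identity yields $[B(a_1),B(B(a_4)),B(B(a_2))]\o B(a_3)$. On the right, iterating the intertwiner $\D^{'}\circ B=(B\o B)\circ\D$ and invoking coassociativity of $\D$ and of $\D^{'}$ gives $(\D^{'})^{(3)}\circ B=B^{\o 4}\circ\D^{(3)}$, i.e. $B(a)_{1^{'}}\o B(a)_{2^{'}}\o B(a)_{3^{'}}\o B(a)_{4^{'}}=B(a_1)\o B(a_2)\o B(a_3)\o B(a_4)$; substituting this, the right-hand side also collapses to $[B(a_1),B(B(a_4)),B(B(a_2))]\o B(a_3)$. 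The two sides coincide, so the second identity holds on $C_B$ and $B$ is a Rota-Baxter operator on $C_B$.

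I do not expect a genuine obstacle here: the one delicate point is keeping track of Sweedler legs in the last computation, and the underlying reason everything works is that the second Rota-Baxter axiom is precisely the statement that $B$ intertwines $\D$ with $\D^{'}$, so that its iterates push the ``descendent-of-descendent'' comultiplication back onto $\D$ and make the check on $C_B$ formally identical to a single use of the original axiom on $C$.
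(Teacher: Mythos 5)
Your proposal is correct and follows essentially the same route as the paper: both parts reduce to the observation that the second Rota--Baxter axiom is exactly the intertwining relation $\D^{'}\circ B=(B\o B)\circ\D$, and the verification on $C_B$ uses its iterate $B(a)_{1^{'}}\o\cdots\o B(a)_{4^{'}}=B(a_1)\o\cdots\o B(a_4)$ together with $B([a,b,c])=[B(a),B(b),B(c)]$, just as in the paper's computation. Your write-up is if anything slightly more explicit about the iterated-intertwiner step that the paper leaves implicit in its final equality.
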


\begin{proof}
First we note that $\D^{'}B=(B\o B)\D$ and $\varepsilon\circ B=\varepsilon$. Then $B$ is a coalgebra map from  a Hopf heap $C$ to the descendent Hopf heap $C_{B}$, and $B([a,b,c])=[B(a),B(b),B(c)]$, for all $a,b,c\in C$. Therefore, $B$ is a Hopf heap homomorphism from $C$ to $C_{B}$.

Moreover, we can prove that $B$ is a Rota-Baxter operator on the descendent Hopf heap $C_{B}$: for any $a\in C$, we have
\begin{eqnarray*}
B(a_{1^{'}})\o B(a_{2^{'}})
	&=&B[a_{1},B(a_{4}),B(a_{2})]\o B(a_{3})\\
	&=&[B(a_{1}),B(B(a_{4})),B(B(a_{2}))]\o B(a_{3})\\
	&=&[B(a)_{1^{'}},B(B(a)_{4^{'}}),B(B(a)_{2^{'}})]\o B(a)_{3^{'}}.
\end{eqnarray*}
\end{proof}

\begin{prop}\mlabel{prop:isoHopfheap}
Let $B$ be a Rota-Baxter operator on commutative Hopf heap $(C, \Delta, \chi)$. If $B$ is  a surjective map such that $\varepsilon\circ B=\varepsilon$, then $(C_{B}, \Delta^{'}_{B}, \chi)$ $\cong$ $(C_{B^{(\psi)}}, \Delta^{'}_{B^{(\psi)}}, \chi)$ as Hopf heaps, where $\psi:C\rightarrow C$ is a Hopf heap automorphism, and $B^{(\psi)}$ is defined by Proposition \mref{prop:hopfheap2}.
\end{prop}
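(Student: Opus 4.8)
The plan is to show that the Hopf heap automorphism $\psi$ itself, now regarded as a map $C_{B}\to C_{B^{(\psi)}}$, is the required isomorphism; in other words, $C_{B^{(\psi)}}$ is nothing but the transport of the Hopf heap structure of $C_{B}$ along $\psi$. First I would check that $C_{B^{(\psi)}}$ is a legitimate descendent Hopf heap. By Proposition \mref{prop:hopfheap2}, $B^{(\psi)}=\psi\circ B\circ\psi^{-1}$ is a Rota-Baxter operator on $(C,\D,\chi)$. Since $\psi$ and $\psi^{-1}$ are bijective and $B$ is surjective, $B^{(\psi)}$ is surjective; and since $\psi^{-1}$ is a coalgebra map and $\v\circ B=\v$, we get $\v\circ B^{(\psi)}=\v\circ\psi\circ B\circ\psi^{-1}=\v\circ B\circ\psi^{-1}=\v\circ\psi^{-1}=\v$. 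Hence Proposition \mref{prop:rbohopfheapcoalg} and Theorem \mref{descendenthopfalg} apply to $B^{(\psi)}$, so $C_{B^{(\psi)}}=(C,\D^{'}_{B^{(\psi)}},\chi)$ is a Hopf heap with $\D^{'}_{B^{(\psi)}}(c)=[c_{1},B^{(\psi)}(c_{4}),B^{(\psi)}(c_{2})]\o c_{3}$.

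Next I would isolate the one identity that makes everything work, namely $B^{(\psi)}\circ\psi=\psi\circ B$, which is immediate from the definition of $B^{(\psi)}$. Because both $C_{B}$ and $C_{B^{(\psi)}}$ carry the \emph{same} ternary operation $\chi$, and $\psi$ is already an automorphism of the Hopf heap $(C,\chi)$, we automatically have $\psi([a,b,c])=[\psi(a),\psi(b),\psi(c)]$; so $\psi$ respects the heap operations of $C_{B}$ and $C_{B^{(\psi)}}$, and it is bijective. Moreover $\v\circ\psi=\v$ since $\psi$ is a coalgebra automorphism of $(C,\D,\v)$. Thus the only thing remaining is the comultiplication compatibility $\D^{'}_{B^{(\psi)}}\circ\psi=(\psi\o\psi)\circ\D^{'}_{B}$.

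This I would verify by a direct Sweedler computation, using that $\psi$ is a coalgebra map for $\D$ (so $\psi(a)_{i}=\psi(a_{i})$), that $\psi$ preserves $\chi$, and the identity $B^{(\psi)}\circ\psi=\psi\circ B$:
\begin{eqnarray*}
\D^{'}_{B^{(\psi)}}(\psi(a))&=&[\psi(a)_{1},B^{(\psi)}(\psi(a)_{4}),B^{(\psi)}(\psi(a)_{2})]\o\psi(a)_{3}\\
&=&[\psi(a_{1}),B^{(\psi)}(\psi(a_{4})),B^{(\psi)}(\psi(a_{2}))]\o\psi(a_{3})\\
&=&[\psi(a_{1}),\psi(B(a_{4})),\psi(B(a_{2}))]\o\psi(a_{3})\\
&=&\psi([a_{1},B(a_{4}),B(a_{2})])\o\psi(a_{3})\\
&=&(\psi\o\psi)\big([a_{1},B(a_{4}),B(a_{2})]\o a_{3}\big)=(\psi\o\psi)\D^{'}_{B}(a).
\end{eqnarray*}
Hence $\psi\colon C_{B}\to C_{B^{(\psi)}}$ is a bijective coalgebra map preserving $\chi$, i.e.\ a Hopf heap isomorphism. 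I do not expect a genuine obstacle here: the computation is bookkeeping with Sweedler indices, and the only point requiring a little care is confirming that the surjectivity and counit hypotheses needed to invoke Proposition \mref{prop:rbohopfheapcoalg} and Theorem \mref{descendenthopfalg} for $B^{(\psi)}$ are indeed inherited from those for $B$.
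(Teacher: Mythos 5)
Your proposal is correct and follows essentially the same route as the paper: verify that $B^{(\psi)}$ inherits surjectivity and $\v\circ B^{(\psi)}=\v$ so that $C_{B^{(\psi)}}$ is a descendent Hopf heap, then check $\D^{'}_{B^{(\psi)}}\circ\psi=(\psi\o\psi)\circ\D^{'}_{B}$ by the identical Sweedler computation using $B^{(\psi)}\circ\psi=\psi\circ B$. The only difference is that you spell out a few points the paper leaves implicit (that $\psi$ preserves $\chi$ and is bijective), which is harmless.
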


\begin{proof}
By Proposition \mref{prop:hopfheap2}, we know $B^{(\psi)}$ is a Rota-Baxter operator on $C$.

It is easy to see that $\varepsilon\circ B^{(\psi)}=\varepsilon\circ\psi\circ B \circ \psi^{-1}=\varepsilon,$
and $B^{(\psi)}$ is surjective, so,  by Proposition \mref{prop:rbohopfheapcoalg} and Theorem \mref{descendenthopfalg}, $C_{B^{(\psi)}}$ is a Hopf heap.

In addition, for any $a\in C$,
\begin{eqnarray*}
	\Delta^{'}_{B^{(\psi)}}(\psi(a))
	&=&[\psi(a)_{1},B^{(\psi)}(\psi(a)_{4}),B^{(\psi)}(\psi(a)_{2})]\o \psi(a)_{3}\\
	&=&[\psi(a_{1}),B^{(\psi)}(\psi(a_{4})),B^{(\psi)}(\psi(a_{2}))]\o \psi(a_{3})\\
	&=&[\psi(a_{1}),\psi( B(a_{4})),\psi( B(a_{2}))]\o \psi(a_{3})\\
	&=&(\psi\o\psi)([a_{1}, B(a_{4}),B(a_{2})]\o a_{3})\\
	&=&(\psi\o\psi)(\Delta^{'}_{B}(a)).
\end{eqnarray*}

Hence, $\psi$ is an isomorphism from the Hopf heap $C_{B}$ to the Hopf heap $C_{B^{(\psi)}}$.
\end{proof}

In what follows, we prove that a Hopf co-brace (see \mcite{AGV,Zheng1}) can be induced by a Rota-Baxter operator on commutative Hopf heap.

\begin{prop}
Let $B$ be a Rota-Baxter operator on commutative Hopf heap $(C, \Delta, \chi)$, $x\in G(C)$. If $B$ is a surjective map such that $\varepsilon\circ B=\varepsilon$ and $B(x)=x$, then the following conclusions hold:
\begin{enumerate}
\item $(H_{x}(C),\Delta,\Delta^{'}_{B})$ is a Hopf co-brace.

 \item There is an isomorphism: $(H_{x}(C),\Delta,\Delta^{'}_{B})$ $\cong$ $(H_{x}(C),\Delta,\Delta^{'}_{B^{(\psi)}})$ as Hopf co-braces if there exists a Hopf heap automorphism $\psi:C\rightarrow C$ such that $\psi(x)=x$.

     \end{enumerate}
\end{prop}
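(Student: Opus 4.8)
The plan is to obtain part (1) from the interplay between the descendent Hopf heap $C_B$ of Theorem \ref{descendenthopfalg} and the Hopf algebra $H_x(C)$, and to deduce part (2) almost at once from Proposition \ref{prop:isoHopfheap}. I would begin by recording three preliminary facts. Since $(C,\chi)$ is commutative, $H_x(C)$ is a \emph{commutative} Hopf algebra, because $ab=[a,x,b]=[b,x,a]=ba$. Since $\varepsilon\circ B=\varepsilon$ and $B(x)=x$, Proposition \ref{prop:rbohopfalghopfheap} gives that $B$ is a Rota-Baxter co-operator on $H_x(C)$. And since $x$ is group-like for $\Delta$ with $B(x)=x$, one has $\Delta'_B(x)=[x,B(x),B(x)]\otimes x=[x,x,x]\otimes x=x\otimes x$, so $x\in G(C_B)$; hence Example \ref{exam:hopfheaphopfalg}, applied to the Hopf heap $C_B=(C,\Delta'_B,\chi)$, turns $(C,\Delta'_B)$ into a Hopf algebra $H_x(C_B)$ whose multiplication $[a,x,b]$, unit $x$, counit $\varepsilon$ and antipode $[x,a,x]$ are exactly those of $H_x(C)$. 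Thus $\Delta$ and $\Delta'_B$ equip the fixed algebra $H_x(C)$ with two Hopf algebra structures, which is the first requirement for a Hopf co-brace.

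For part (1) it then remains to verify the Hopf co-brace compatibility of \cite{AGV,Zheng1} relating $\Delta$ and $\Delta'_B$ over this common multiplication. Here I would rewrite, using $[u,v,w]=u\cdot S(v)\cdot w$ (Remark \ref{rem:HaHp}) and commutativity,
$$\Delta'_B(a)=[a_1,B(a_4),B(a_2)]\otimes a_3=a_1\,B(a_2)\,S(B(a_4))\otimes a_3,$$
identify this with the second comultiplication attached to the Rota-Baxter co-operator $B$ on $H_x(C)$, and invoke the result of \cite{Zheng1} that a Rota-Baxter co-operator on a commutative Hopf algebra induces a Hopf co-brace; alternatively one checks the compatibility directly, the identity $B(a_1)\otimes B(a_2)=B(a)_1\,B(B(a)_2 S(B(a)_4))\otimes B(a)_3$ defining a Rota-Baxter co-operator being precisely what is needed. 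I expect the reconciliation of the $\Delta'_B$ of Proposition \ref{prop:rbohopfheapcoalg} with the comultiplication coming from the abstract Rota-Baxter-co-operator construction — rather than the co-brace axiom itself — to be the main obstacle in part (1).

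For part (2), set $B^{(\psi)}=\psi\circ B\circ\psi^{-1}$. I would first check that $B^{(\psi)}$ satisfies all the hypotheses needed to apply part (1): it is a Rota-Baxter operator on $C$ by Proposition \ref{prop:hopfheap2}; it is surjective since $\psi$ is bijective and $B$ is surjective; $\varepsilon\circ B^{(\psi)}=\varepsilon$ since $\psi$ is a coalgebra map and $\varepsilon\circ B=\varepsilon$; and $B^{(\psi)}(x)=\psi(B(\psi^{-1}(x)))=\psi(B(x))=\psi(x)=x$, because $\psi(x)=x$ forces $\psi^{-1}(x)=x$. So $(H_x(C),\Delta,\Delta'_{B^{(\psi)}})$ is again a Hopf co-brace by part (1). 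Next I would note that $\psi$ is a Hopf algebra automorphism of $H_x(C)$: it is a bijective coalgebra map fixing the unit $x$, and $\psi([a,x,b])=[\psi(a),x,\psi(b)]$ makes it an algebra map. Finally, Proposition \ref{prop:isoHopfheap} gives that $\psi\colon C_B\to C_{B^{(\psi)}}$ is an isomorphism of Hopf heaps, so in particular $\Delta'_{B^{(\psi)}}\circ\psi=(\psi\otimes\psi)\circ\Delta'_B$; hence $\psi$ intertwines $\Delta$ with $\Delta$ and $\Delta'_B$ with $\Delta'_{B^{(\psi)}}$ while being an algebra isomorphism, so it is the desired isomorphism of Hopf co-braces. (A morphism of Hopf co-braces is just a linear map that is an algebra map and a coalgebra map for both comultiplications, the preservation of antipodes being automatic.)
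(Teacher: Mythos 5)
Your overall route coincides with the paper's: both obtain the second bialgebra structure on $H_x(C)$ from the descendent Hopf heap $C_B$ of Theorem \mref{descendenthopfalg} together with Example \mref{exam:hopfheaphopfalg} (your observation that $\Delta'_B(x)=[x,B(x),B(x)]\otimes x=x\otimes x$, so that $H_x(C_B)$ and $H_x(C)$ share multiplication, unit and antipode, is exactly the implicit first step of the paper's argument), and part (2) is handled identically, via Proposition \mref{prop:isoHopfheap} plus the check that $\psi$ is multiplicative for $[\,\cdot\,,x,\cdot\,]$; there you are in fact slightly more careful than the paper, since you verify that $B^{(\psi)}$ inherits surjectivity, $\varepsilon\circ B^{(\psi)}=\varepsilon$ and $B^{(\psi)}(x)=x$ before applying part (1) to it.

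The gap is in part (1): the co-brace compatibility
$$h_{11'}S(h_{2})h_{31'}\otimes h_{12'}\otimes h_{32'}=h_{1'}\otimes h_{2'1}\otimes h_{2'2}$$
is the substantive content of the statement, and you do not prove it. You offer two substitutes. The first --- quoting from \cite{Zheng}/\cite{Zheng1} that a Rota--Baxter co-operator on a commutative Hopf algebra induces a Hopf co-brace, after identifying $\Delta'_B(a)$ with $a_1B(a_2S(a_4))\otimes a_3$ --- is only legitimate if that exact theorem exists and its induced comultiplication agrees with $\Delta'_B$; neither is established (and the identification itself uses $S\circ B=B\circ S$ on $H_x(C)$, which does follow from $B(x)=x$ via $B([x,a,x])=[x,B(a),x]$, but should be recorded). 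The second --- ``check directly, the Rota--Baxter identity being precisely what is needed'' --- understates the work: the paper's verification is a dozen-step computation that uses, besides the Rota--Baxter identity in the form $\Delta'_B\circ B=(B\otimes B)\circ\Delta$, the heap associativity and commutativity repeatedly, and crucially the counit axiom of the \emph{descendent} heap, $[a,c_{1'},c_{2'}]=\varepsilon(c)a$, applied to $c=B(h_4)$ (this step genuinely requires Theorem \mref{descendenthopfalg}, since $B$ is not a coalgebra map for $\Delta$). Until one of these two routes is actually carried out, part (1) is not proved.
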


\begin{proof} (a) By Theorem \mref{descendenthopfalg}, $(C,\Delta^{'}_{B},\chi)$ is a Hopf heap, and by Example \mref{exam:hopfheaphopfalg}, $(H_{x}(C), \Delta^{'}_{B})$ a Hopf algebra.

Moreover, for any $h\in C$, we have
\begin{eqnarray*}
	h_{11^{'}}S(h_{2})h_{31^{'}}\o h_{12^{'}}\o h_{32^{'}}
	&=&[[h_{11^{'}},x,S(h_{2})],x,h_{31^{'}}]\o h_{12^{'}}\o h_{32^{'}}\\
	&=&[[h_{11^{'}},x,[x,h_{2},x]],x,h_{31^{'}}]\o h_{12^{'}}\o h_{32^{'}}\\
	&=&[h_{11^{'}},h_{2},h_{31_{'}}]\o h_{12^{'}}\o h_{32^{'}}\\
	&=&[[h_{11}, B(h_{14}),B(h_{12})],h_{2},[h_{31}, B(h_{34}),B(h_{32})]]\o h_{13}\o h_{33}\\
    &=&[[h_{1}, B(h_{4}),B(h_{2})],h_{5},[h_{6}, B(h_{9}),B(h_{7})]]\o h_{3}\o h_{8}\\
    &=&[[[h_{1}, B(h_{4}),B(h_{2})],h_{5},h_{6}], B(h_{9}),B(h_{7})]\o h_{3}\o h_{8}\\
    &=&[[h_{1}, B(h_{4}),B(h_{2})], B(h_{7}),B(h_{5})]\o h_{3}\o h_{6}\\
    &=&[h_{1}, B(h_{4}),[B(h_{2}), B(h_{7}),B(h_{5})]]\o h_{3}\o h_{6}\\
    &=&[h_{1}, B(h_{4}),[B(h_{5}), B(h_{7}),B(h_{2})]]\o h_{3}\o h_{6}\\
    &=&[[h_{1}, B(h_{4}),B(h_{5})], B(h_{7}),B(h_{2})]\o h_{3}\o h_{6}\\
    &=&[[h_{1}, B(h_{41}),B(h_{42})], B(h_{6}),B(h_{2})]\o h_{3}\o h_{5}\\
    &=&[[h_{1}, B(h_{4})_{1'},B(h_{4})_{2'}], B(h_{6}),B(h_{2})]\o h_{3}\o h_{5}\\
    &=&[h_{1}, B(h_{5}),B(h_{2})]\o h_{3}\o h_{4}\\
	&=&h_{1^{'}}\o h_{2^{'}1}\o h_{2^{'}2}.
\end{eqnarray*}

Hence, $(H_{x}(C),\Delta,\Delta^{'}_{B})$ is a Hopf co-brace.

(b) By (1), $(H_{x}(C),\Delta,\Delta^{'}_{B^{(\psi)}})$ is also a Hopf co-brace. Again by Proposition \mref{prop:isoHopfheap}, $\psi$ is a coalgebra homomorphism from $(H_{x}(C),\Delta^{'}_{B})$ to $(H_{x}(C),\Delta^{'}_{B^{(\psi)}})$.

Moreover, for any $a,b\in C$,
$$\psi(ab)=\psi([a,x,b])=[\psi(a),\psi(x),\psi(b)]
=[\psi(a),x,\psi(b)]=\psi(a)\psi(b),$$
that is, $\psi$ is an algebra homomorphism.

Hence $(H_{x}(C),\Delta,\Delta^{'}_{B})$ $\cong$ $(H_{x}(C),\Delta,\Delta^{'}_{B^{(\psi)}})$ as Hopf co-braces.
\end{proof}

\vspace{2mm}

\section{Rota-Baxter Hopf heap modules and their structure theorems}\mlabel{sec:rbohpm}

\vspace{2mm}

In this section, we introduce the conception of Hopf heap modules, and give its structure theorem.

\begin{defn}\mlabel{def:hopfheapmodule}
	Let $(H,\chi)$ be a Hopf heap, and $(M,\r)$ a right $H$-comodule (denote $\r(m)=m_{(0)}\o m_{(1)}$). We call $M$ a right $H$-Hopf heap module if there is an action $\lhd: M\o H^{co}\o H\rightarrow M$ by $m\lhd (c\o d)$, such that for any $a,b,c,d\in H, m\in M$,
\begin{eqnarray}
	&&m\lhd (a\o [b, c,d])=(m\lhd (a\o b))\lhd (c\o d),\\
	&&m\lhd (c_1\o c_2)=\v(c)m,\\
	&&\r(m\lhd (c\o d))=m_{(0)}\lhd (c_2\o d_1)\o [m_{(1)},c_1,d_2,].
	\end{eqnarray}
	In the following, we denote the right $H$-Hopf heap module $M$ by $(M,\lhd,\r)$ or $M$.
\end{defn}

	Let $(M,\lhd_1,\r_1)$ and $(N,\lhd_2,\r_2)$ be two right $H$-Hopf heap modules and $f: M\rightarrow N$ a linear map. We call $f$ a right $H$-Hopf heap module map if it is a right $H$-comodule map and we have
	$$f(m\lhd_1(a\o b))=f(m)\lhd_2(a\o b)
	$$ for any $a,b\in H, m\in M$.

\begin{remark}\mlabel{rem:rthestructuretheorem}
	(a) Let $(H, \chi)$ be a Hopf heap, and $M$ a vector space. Similar to the case of the left Hopf heap module, we have $M\o H$ is a right $H$-Hopf heap module by $(m\o h)\lhd' (g\o l)=m\o [h,g,l]$ and $\rho'(m\o h)=m\o h_{1}\o h_{2}$, for all $h,g,l\o H, m\in M$.
	
	(b) Let $M$ be a right $H$-Hopf heap module, $x\in G(H)$. Then $M$ is a right $H_x(H)$-Hopf module by defining the action as follows:
	$$m\cdot h= m\lhd (x\o h), \ \ h\in H, m\in M.$$
	
Conversely, if $M$ is a right $H$-Hopf module, then $M$ is a right $Hp(H,[-,-,-])$-Hopf heap module by defining the action as follows:
	$$ m\lhd (a\o b)=m\cdot(S(a)b),\ \ a,b\in H, m\in M.$$
	
	(c)  Let $(H, \chi)$ be a Hopf heap, $x\in G(H)$. Then there exists an isomorphism of right $H$-Hopf heap modules: $M\cong  M^{coH}_x\o H,$ by two maps as follows:
	$$\alpha: M_{x}^{coH}\o H\rightarrow M,~ ~ m\o h\mapsto m\lhd(x\o h),$$
		$$\beta: M\rightarrow M_{x}^{coH}\o H,~ ~ m\mapsto P(m_{(0)})\o m_{(1)},$$ where $P(m)=m_{(0)}\lhd (m_{(1)}\o x)$, for $m\in M$.

In fact, we have only to prove that $\mathrm{Im} (P(m))\subseteq M_{x}^{coH}$:
\begin{eqnarray*}
\r(P(m))&=&\r(m_{(0)}\lhd (m_{(1)}\o x))\\
&=& m_{(0)(0)}\lhd (m_{(1)2}\o x)\o [m_{(0)(1)},m_{(1)1},x]\\
&=& m_{(0)}\lhd (m_{(1)22}\o x)\o [m_{(1)1},m_{(1)21},x]\\
&=& m_{(0)}\lhd (m_{(1)}\o x)\o x\\
&=& P(m)\o x.
\end{eqnarray*}

\end{remark}

%\begin{defn}\mcite{BH}
%	\color{red}A Grunspan map for a Hopf heap $C$ is a coalgebra homomorphism $\vartheta: C \rightarrow C$,
%	such that, for all $a, b, c, d, e \in C$,
	%$$ [[a, b, \vartheta(c)], d, e] = [a, [d, c, b], e].$$
%	Every Hopf heap admits the Grunspan map. The Grunspan map is given by
%$$\vartheta : C \rightarrow C, c \mapsto\sum [c_{1}, [e_{1}, c_{3}, c_{2}],e_{2}],$$
%	where $e \in C$ is any element such that $\v(e) = 1$.
%\end{defn}
%\begin{prop}
%	\color{red}If $C$ is a commutative Hopf heap and $M$ is a left $C$-Hopf heap module, we have
	%$$[[a,b,c],d,e]=[a,[b,c,d],e]=[a,b,[c,d,e]],$$
%$$([a,b,c]\o d)\rhd m=(a\o [b,c, d])\rhd m,
%$$
	% $$(c_1\o c_2)\rhd m=(c_2\o c_1)\rhd m,$$
%\end{prop}
%\begin{proof}
%	\color{red}For any $x\in G(C), a,b,c,e\in C$, we have
%	\begin{eqnarray*}
%		\vartheta(c)
%		&=&[c_{1}, [e_{1}, c_{3}, c_{2}],e_{2}]=[c_{1}, [c_{2}, c_{3}, e_{1}],e_{2}]=[c,  e_{1},e_{2}]=\v(e)c=c,
%	\end{eqnarray*}
%	hence,
%	$$ [[a, b, c], d, e]=[[a, b, \vartheta(c)], d, e] = [a, [d, c, b], e]=[a, [b, c, d], e].$$
	
%	\begin{eqnarray*}
%		([a,b,c]\o d)\rhd m
%		&=&([a,b,c]\o d)\rhd((x\o x)\rhd m)=([[a,b,c], d,x]\o x)\rhd m\\
%		&=&([a,[b,c, d],x]\o x)\rhd m=(a\o[b,c, d])\rhd((x\o x)\rhd m)\\
%		&=&(a\o [b,c, d])\rhd m,
%	\end{eqnarray*}
%	\begin{eqnarray*}
%		(c_1\o c_2)\rhd m
%		&=&\v(c)m=\v(c) (x\o x)\rhd m= ([x, c_1,c_2]\o x)\rhd m= ([x, c_1,c_2]\o x)\rhd m\\		
%		&=&m\lhd ([c_2,c_1,x] \o x)\rhd m= (c_2\o c_1)\rhd((x\o x)\rhd m)=(c_2\o c_1)\rhd m.
%	\end{eqnarray*}
	
%\end{proof}

\begin{prop}
	Let  $C$ be a commutative Hopf heap, and $M$  a right $C$-Hopf heap module. Then we have
	$$m\lhd (a\o [b, c,d])=m\lhd ([a,b,c]\o d),$$
	$$m\lhd (c_1\o c_2)=m\lhd (c_2\o c_1).$$
\end{prop}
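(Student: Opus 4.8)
The plan is to deduce both identities directly inside the ambient Hopf heap $C$, invoking only the two action axioms of Definition~\ref{def:hopfheapmodule} — the associativity-type relation $m\lhd(a\o[b,c,d])=(m\lhd(a\o b))\lhd(c\o d)$ and the counit-type relation $m\lhd(c_1\o c_2)=\v(c)m$ — together with the commutativity of $C$; the comodule compatibility axiom plays no role. First I would fix an element $x\in G(C)$ (available since $G(C)\neq\emptyset$ by the standing assumption) and record the auxiliary identity
$$m\lhd(a\o b)=m\lhd\bigl(x\o[x,a,b]\bigr),\qquad a,b\in C,\ m\in M,$$
which holds with no commutativity hypothesis. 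Indeed, applying the associativity axiom to the right-hand side, reading the bracket as $[x,a,b]=[b',c',d']$ with $b'=x$, $c'=a$, $d'=b$, gives $(m\lhd(x\o x))\lhd(a\o b)$; and since $\D(x)=x\o x$ and $\v(x)=1$, the counit axiom yields $m\lhd(x\o x)=m$.

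For the first identity I would chain the relations as follows: the associativity axiom gives $m\lhd(a\o[b,c,d])=(m\lhd(a\o b))\lhd(c\o d)$; rewriting the inner factor by the auxiliary identity and then applying the associativity axiom in reverse turns this into $m\lhd\bigl(x\o[[x,a,b],c,d]\bigr)$. Now commutativity enters through the identity $[[p,q,r],s,t]=[p,[q,r,s],t]$ valid in any commutative Hopf heap: with $(p,q,r,s,t)=(x,a,b,c,d)$ it gives $[[x,a,b],c,d]=[x,[a,b,c],d]$, so the expression becomes $m\lhd\bigl(x\o[x,[a,b,c],d]\bigr)$, and one further application of the auxiliary identity (in reverse, with the pair $([a,b,c],d)$) identifies this with $m\lhd([a,b,c]\o d)$, as required.

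For the second identity I would show that both sides equal $\v(c)m$. The left-hand side is $\v(c)m$ by the counit axiom. For the right-hand side, the auxiliary identity gives $m\lhd(c_2\o c_1)=m\lhd\bigl(x\o[x,c_2,c_1]\bigr)$; commutativity rewrites $[x,c_2,c_1]=[c_1,c_2,x]$, and the Hopf heap relation $[c_1,c_2,a]=\v(c)a$ of Definition~\ref{def:hopfheap} turns this into $\v(c)x$. Hence $m\lhd(c_2\o c_1)=m\lhd\bigl(x\o\v(c)x\bigr)=\v(c)\,m\lhd(x\o x)=\v(c)m$, which matches the left-hand side.

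No step is genuinely hard — everything reduces to substitution in the two axioms and the known identities for commutative Hopf heaps. The only place demanding attention is the bookkeeping in the first identity: one must check that $[[x,a,b],c,d]$ is presented exactly in the slot pattern required to apply, in turn, the reverse associativity axiom and the commutative-heap shift relation, and that each invocation of the auxiliary identity keeps $x$ in the first argument.
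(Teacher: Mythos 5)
Your proof is correct and follows essentially the same route as the paper's: fix $x\in G(C)$, use $m\lhd(x\o x)=m$ to insert $x$ into the first slot, apply the action-associativity axiom together with the commutative-heap shift $[[x,a,b],c,d]=[x,[a,b,c],d]$, and reduce the second identity to the heap counit relation $[c_1,c_2,x]=\v(c)x$. The only cosmetic difference is in the second identity, where the paper transforms the left side into the right side (using the first identity along the way) while you show both sides equal $\v(c)m$ directly via your auxiliary identity; the underlying ingredients are identical.
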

\begin{proof} For any $a,b,c\in C$ and given $x\in G(C)$, we have
	\begin{eqnarray*}
		m\lhd (a\o [b, c,d])
		&=&(m\lhd(x\o x) )\lhd(a\o [b, c,d])=m\lhd(x\o [x,a,[b, c,d]])\\
		&=&m\lhd(x\o [x,[a,b, c],d])=(m\lhd(x\o x))\lhd ([a,b, c]\o d)\\
		&=&m\lhd ([a,b, c]\o d),
	\end{eqnarray*}
	\begin{eqnarray*}
	m\lhd (c_1\o c_2)
	&=&\v(c)m=\v(c)m\lhd (x\o x)=m\lhd ([x, c_1,c_2]\o x)=m\lhd ([c_2, c_1,x]\o x)\\		
	&=&m\lhd (c_2\o [c_1,x, x])=m\lhd (c_2\o c_1).
\end{eqnarray*}
\end{proof}

Next, we will introduce the definition of a Rota-Baxter operator on a Hopf heap module.

\begin{defn}\mlabel{def:rbcohpm}
	Let $B$ be a Rota-Baxter operator on Hopf heap $H$ (that is, $(H, B)$ is a  Rota-Baxter Hopf heap). A  Rota-Baxter Hopf heap module over $(H,B)$ is a pair $(M, T)$ with a right $H$-Hopf heap module $M$ and a linear map $T: M\rightarrow M$ satisfying
	%$$T(m\lhd(a\o b))=T(m)\lhd (B(a)\o B(b)),$$
	$$T(m_{(0)})\o B(m_{(1)})=T(m)_{(0)}\lhd (B(T(m)_{(1)3})\o B(T(m)_{(1)1}))\o T(m)_{(1)2},$$
	for all $ m\in M$.
\end{defn}

Specially, $(H,B)$ is a  Rota-Baxter Hopf heap module over $(H,B)$ if $(H, B)$ is a  Rota-Baxter Hopf heap.
	
	Let $(M, T_{M})$ and $(N, T_{N})$ be two  $(H, B)$-Hopf heap modules. A homomorphism $f: (M, T_{M}) \rightarrow (N, T_{N})$ of  Rota-Baxter Hopf heap modules over $(H,B)$ is a  homomorphism $f: M\rightarrow N$ of right $H$-Hopf heap modules such that $f\circ T_{M}=T_{N}\circ f$.

%\begin{exam}
%	(a)
	
	%(b)Let $(C,B)$ be a Rota-Baxter Hopf heap and $(M, T)$ an right Rota-Baxter Hopf heap module over $(C,B)$. Then for any $\mu\in K$, $(M, \mu T)$  be an right Rota-Baxter Hopf heap module over $(C,B)$.
	
	%(b) Let $M$ be a right $H$-Hopf heap module with action $\lhd$ and coaction $\rho$, and $V$ a vector space. Then $V\o M$ is a right Hopf module, whose module structure map and comodule structure map are given by $\textrm{id}\o \lhd$ and $\textrm{id}\o \rho$, respectively. So, if $(M, T)$  is a  Rota-Baxter Hopf heap module over $(H,B)$, we easily see that $(V\o M, \textrm{id}\o T)$ is a  Rota-Baxter Hopf heap module over $(H,B)$.
%\end{exam}

\begin{lemma}\mlabel{cmhpff}
	Let $(H,B)$ be a Rota-Baxter Hopf heap, and $M$ a vector space. For any linear map $F:M\o H\rightarrow M$, define a linear map
	
	$$\widehat{T}: M\o H\rightarrow M\o H, \widehat{T}(m\o h)=F(m\o h_{1})\o B(h_{2}),$$
for any $m\in M, h\in H$.

	Then $(M\o H, \widehat{T})$ is a Rota-Baxter Hopf heap module over $(H,B)$.
\end{lemma}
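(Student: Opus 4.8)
The plan is to verify the two structural requirements for $(M \o H, \widehat{T})$ to be a Rota-Baxter Hopf heap module over $(H,B)$: first, that $M \o H$ carries the structure of a right $H$-Hopf heap module, and second, that $\widehat{T}$ satisfies the compatibility condition of Definition \ref{def:rbcohpm}. The first part is immediate from Remark \ref{rem:rthestructuretheorem}(a): the space $M \o H$ is a right $H$-Hopf heap module via $(m\o h)\lhd (g\o l) = m \o [h,g,l]$ and $\rho(m\o h) = m \o h_1 \o h_2$. So the real content is the verification of the Rota-Baxter compatibility identity for $\widehat T$.

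First I would compute the left-hand side of the defining equation for $(M\o H, \widehat T)$. Writing $\rho(m\o h) = (m\o h)_{(0)} \o (m\o h)_{(1)} = (m \o h_1) \o h_2$, the left-hand side is
$$\widehat T\big((m\o h)_{(0)}\big) \o B\big((m\o h)_{(1)}\big) = \widehat T(m \o h_1) \o B(h_2) = F(m\o h_{11}) \o B(h_{12}) \o B(h_2),$$
which after coassociativity reads $F(m \o h_1) \o B(h_2) \o B(h_3)$. The key tool here is that $B$ is a Rota-Baxter operator on $H$, so equation (13) of Definition \ref{def:rbohopfheap} gives $B(h_1) \o B(h_2) = [B(h)_1, B(B(h)_4), B(B(h)_2)] \o B(h)_3$. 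I would apply this to rewrite $B(h_2)\o B(h_3)$ appropriately after re-indexing.

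Then I would expand the right-hand side: with $\widehat T(m\o h) = F(m\o h_1)\o B(h_2)$, so $\widehat T(m\o h)_{(1)} = B(h_2)$ and $\widehat T(m\o h)_{(0)} = F(m\o h_1)$, the right-hand side is
$$\widehat T(m\o h)_{(0)} \lhd \big(B(\widehat T(m\o h)_{(1)3}) \o B(\widehat T(m\o h)_{(1)1})\big) \o \widehat T(m\o h)_{(1)2}.$$
Since $\widehat T(m\o h)_{(1)} = B(h_2)$ and $B$ is a coalgebra map (as $\D' B = (B\o B)\D$ is used throughout, though here $B$ on $H$ itself need only be what the Rota-Baxter axiom provides — I would use $\D B(h_2) = B(h_2)_1 \o B(h_2)_2$ with the translation identities), this becomes $F(m\o h_1)\lhd\big(B(B(h_2)_3)\o B(B(h_2)_1)\big)\o B(h_2)_2$. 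Unravelling the action on $M\o H$ via $(m'\o h')\lhd(g\o l) = m'\o[h',g,l]$, and noting $F(m\o h_1) \in M\o H$ so its second tensor leg absorbs the action, the right-hand side collapses to an expression of the form $F(m\o h_1')\o[\,\cdot\,,B(\cdot),B(\cdot)]\o B(h_2)_2$; matching this against the left-hand side is then exactly an instance of the Rota-Baxter identity (13) for $B$ applied inside the second and third tensor factors, together with coassociativity and counit bookkeeping.

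The main obstacle I anticipate is the index bookkeeping: the Rota-Baxter axiom (13) involves a fourfold coproduct $B(h)_1\o B(h)_2 \o B(h)_3\o B(h)_4$ and the action $\lhd$ on $M\o H$ injects a ternary heap bracket, so one must carefully track which Sweedler components of $h$ (after several applications of coassociativity) land inside $B$, inside $B\circ B$, and which survive to the outer $B(h_2)_2$ slot — and confirm that the final matching is precisely the instance of (13) read off the appropriate legs. Once the indices are aligned, the computation is a routine chain of equalities using only coassociativity, the counit axioms, and the single Rota-Baxter identity (13); no commutativity or cocommutativity of $H$ is needed because the heap bracket here lives entirely in the $H$-factor of $M\o H$ and $F$ is an arbitrary linear map. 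I would present the verification as a single displayed \texttt{eqnarray*} computation, starting from $\widehat T((m\o h)_{(0)})\o B((m\o h)_{(1)})$ and ending at the required right-hand side.
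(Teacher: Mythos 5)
Your approach is essentially the paper's: establish the right $H$-Hopf heap module structure on $M\o H$ via Remark \mref{rem:rthestructuretheorem}(a), expand the Rota-Baxter compatibility condition using $\rho(m'\o h')=m'\o h'_1\o h'_2$ and $(m'\o h')\lhd(g\o l)=m'\o[h',g,l]$, and conclude by the identity $B(a_{1})\o B(a_{2})=[B(a)_{1},B(B(a)_{4}),B(B(a)_{2})]\o B(a)_{3}$ applied to $a=h_2$ together with coassociativity; the paper runs the identical computation from right to left. One bookkeeping slip to repair when writing it out: $\widehat T(m\o h)_{(\hat 0)}=F(m\o h_1)\o B(h_2)_1$ rather than $F(m\o h_1)$ (which lies in $M$, not $M\o H$, and cannot absorb the action $\lhd$), so the right-hand side is $F(m\o h_1)\o[B(h_2)_1,B(B(h_2)_4),B(B(h_2)_2)]\o B(h_2)_3$ --- exactly the needed instance of the Rota-Baxter identity --- whereas your indices are shifted by one because the first comodule leg of $B(h_2)$ was dropped.
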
	
\begin{proof}
By Remark \mref{rem:rthestructuretheorem} (a), we know that $M\o H$ is a right $H$-Hopf heap module.

Moreover, for all $m\in M$, $h\in H$, we obtain
	\begin{eqnarray*}
				&&\widehat{T}(m\o h)_{(\hat{0})}\hat{\lhd} (B(\widehat{T}(m\o h)_{(\hat{1})3})\o B(\widehat{T}(m\o h)_{(\hat{1})1}))\o \widehat{T}(m\o h)_{(\hat{1})2} \\
				&=&	(F(m\o h_{1})\o B(h_{2}))_{(\hat{0})}\hat{\lhd} (B((F(m\o h_{1})\o B(h_{2}))_{(\hat{1})3})\o B((F(m\o h_{1})\o B(h_{2}))_{(\hat{1})1}))\\
				&&\o (F(m\o h_{1})\o B(h_{2}))_{(\hat{1})2} \\
				&=&	(F(m\o h_{1})\o B(h_{2})_{1})\hat{\lhd} (B(B(h_{2})_{23})\o B(B(h_{2})_{21}))\o B(h_{2})_{22}\\
				&=&	F(m\o h_{1}) \o \underbrace{[B(h_{2})_{1}, B(B(h_{2})_{23}), B(B(h_{2})_{21})]\o B(h_{2})_{22}}\\
				&=&	F(m\o h_{1}) \o B(h_{21})\o B(h_{22})\\
				&=&	F(m\o h_{11}) \o B(h_{12})\o B(h_{2})\\
				&=&	\widehat{T}(m\o h_{1})\o B(h_{2})\\
				&=&	\widehat{T}((m\o h)_{(\hat{0})})\o B((m\o h)_{(\hat{1})}).
			\end{eqnarray*}
	%	Hence  $(M\o H, T')$ and $ (M\o H, \widehat{T})$ are right Rota-Baxter Hopf heap modules over $(H,B)$.
	%\end{proof}
%	We have $\tau(x\o y)=y\o x$. $I_{H}$ and $I_{M}$ are identity maps.
	
%	\begin{eqnarray*}
%		&&	(\underbrace{\hat{\lhd}}\o I_{H})(I_{M}\o I_{H}\o m\o I_{H})(I_{M}\o I_{H}\o \tau\o I_{H})(I_{M}\o I_{H}\o I_{H}\o B\o I_{H})(I_{M}\o I_{H}\o B\o\tau)\\
%		&&(I_{M}\o I_{H}\o I_{H}\o \D)(I_{M}\o I_{H}\o \D)\underbrace{\hat{\rho}\widehat{T}}\\
%		&=&(I_{M}\o \chi\o I_{H})(I_{M}\o I_{H}\o m\o I_{H})(I_{M}\o I_{H}\o \tau\o I_{H})(I_{M}\o I_{H}\o I_{H}\o B\o I_{H})(I_{M}\o I_{H}\o B\o\tau)\\
%		&&(I_{M}\o I_{H}\o I_{H}\o \D)(I_{M}\o I_{H}\o \D)(I_{M}\o \D)(I_{M}\o B)(F\o I_{H})(I_{M}\o \D)\\
%		&=&(I_{M}\o \underbrace{((\chi\o I_{H})(I_{H}\o m\o I_{H})(I_{H}\o \tau\o I_{H})(I_{H}\o I_{H}\o B\o I_{H})(I_{H}\o B\o\tau)(I_{H}\o I_{H}\o \D)(I_{H}\o \D)\D B)})\\
%		&&(F\o I_{H})(I_{M}\o \D)\\
%		&=&(I_{M}\o (B\o B)\D)(F\o I_{H})(I_{M}\o \D)\\
%		&=&(I_{M}\o B\o B)\underbrace{(I_{M}\o \D)(F\o I_{H})}(I_{M}\o \D)\\
%		&=&(I_{M}\o B\o B)(F\o I_{H}\o I_{H})(I_{M}\o I_{H}\o \D)(I_{M}\o \D)\\	
%		&=&(I_{M}\o B\o B)(F\o I_{H}\o I_{H})(I_{M}\o \D \o I_{H})(I_{M}\o \D)\\	
%		&=&((I_{M}\o B)(F\o I_{H})(I_{M}\o \D)\o B)(I_{M}\o \D)\\
%		&=&(\widehat{T}\o B)\hat{\rho}.
%	\end{eqnarray*}
	
	Hence  $(M\o H, \widehat{T})$ is a Rota-Baxter Hopf heap module over $(H,B)$.
\end{proof}

In what follows, we give the structure theorem of Rota-Baxter Hopf heap modules.
\begin{theorem}\mlabel{the:rbhpmst}
	Let $(H,B)$ be a Rota-Baxter commutative Hopf heap, and $(M, T)$ a Rota-Baxter Hopf heap module over $(H, B)$, $x\in G(H)$. Define $$\widehat{T}(m\o h)=T(m\lhd(x\o h_{1}))_{(0)}\lhd(T(m\lhd(x\o h_{1}))_{(1)}\o x) \o B(h_{2}),~~~m\in M, h\in H.$$
	
Then the following conclusions hold.

	(a) $\mathrm{Im}\widehat{T}\subset M_{x}^{coH}\o H$, where $M^{coH}_{x}=\{m\in M, \rho(m)=m\o x\}$.
	
	(b) If $\varepsilon\circ B=\varepsilon$, then there exists an isomorphism of Rota-Baxter Hopf heap modules:
	$$(M, T)\cong( M_{x}^{coH}\o H, \widehat{T}).$$
\end{theorem}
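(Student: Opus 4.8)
The plan is to adapt the structure theorem for Hopf heap modules (Theorem~\mref{the:thestructuretheorem}) and for Hopf modules to the Rota-Baxter setting, carefully tracking how the operators $B$ and $T$ interact with the isomorphism $M\cong M^{coH}_x\o H$ from Remark~\mref{rem:rthestructuretheorem}(c). The underlying module isomorphism is already in hand: the maps are $\a(m\o h)=m\lhd(x\o h)$ and $\b(m)=P(m_{(0)})\o m_{(1)}$ with $P(m)=m_{(0)}\lhd(m_{(1)}\o x)$, and these are mutually inverse right $H$-Hopf heap module maps (the right-handed analogue of the computations in the proof of Theorem~\mref{the:thestructuretheorem}). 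So the real content is (a) checking $\mathrm{Im}\,\widehat T\subset M^{coH}_x\o H$, and (b) checking that $\widehat T$ is the transport of $T$ along $\b$, i.e. $\widehat T=\b\circ T\circ\a$, and that $\widehat T$ actually satisfies the Rota-Baxter Hopf heap module axiom of Definition~\mref{def:rbcohpm} over $(H,B)$.

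For part (a), I would compute $\rho$ applied to $\widehat T(m\o h)$. Writing $n=m\lhd(x\o h_1)$, the first tensor factor of $\widehat T(m\o h)$ is $P(T(n)_{(0)})\lhd$-type expression $T(n)_{(0)}\lhd(T(n)_{(1)}\o x)$, which is exactly $P$ applied to $T(n)$; since $P(M)\subseteq M^{coH}_x$ by the computation already displayed in Remark~\mref{rem:rthestructuretheorem}(c), the element $T(n)_{(0)}\lhd(T(n)_{(1)}\o x)$ lies in $M^{coH}_x$, so $\widehat T(m\o h)\in M^{coH}_x\o H$. This part should be essentially immediate once the expression is recognized as $P\circ T$ composed with $\a$ in the first slot.

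For part (b), assuming $\v\circ B=\v$, I would first verify $\widehat T=\b\circ T\circ\a$ by unwinding: $(\b\circ T\circ\a)(m\o h)=\b(T(m\lhd(x\o h)))$; here I need $\D$ of $m\lhd(x\o h)$, which by axiom (17) in Definition~\mref{def:hopfheapmodule} gives $(m\lhd(x\o h))_{(0)}\o(m\lhd(x\o h))_{(1)}=m_{(0)}\lhd(x\o h_1)\o[m_{(1)},x,h_2]=m_{(0)}\lhd(x\o h_1)\o m_{(1)}h_2$ in $H_x(H)$ notation, and one uses $x\in G(H)$ plus $\v\circ B=\v$ to match the $B(h_2)$ and collapse the $x$'s, exactly mirroring the string of equalities in the proof of Theorem~\mref{the:thestructuretheorem}. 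Since $\a$ and $\b$ are inverse Hopf heap module isomorphisms, $\widehat T=\b T\a$ is automatically a linear map on $M^{coH}_x\o H$, and the pair $(\a,\b)$ then gives an isomorphism of Rota-Baxter Hopf heap modules $(M,T)\cong(M^{coH}_x\o H,\widehat T)$ \emph{provided} $(M^{coH}_x\o H,\widehat T)$ is genuinely a Rota-Baxter Hopf heap module over $(H,B)$ — but that is forced: transporting the structure $(\lhd,\rho,T)$ along an isomorphism of the underlying data preserves all the defining identities, and Lemma~\mref{cmhpff} already shows any $\widehat T$ of the form $F(m\o h_1)\o B(h_2)$ on $M\o H$ satisfies the Rota-Baxter axiom, with here $F(m\o h)=T(m\lhd(x\o h))_{(0)}\lhd(T(m\lhd(x\o h))_{(1)}\o x)$.

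\textbf{The main obstacle} I anticipate is the bookkeeping in verifying $\widehat T=\b\circ T\circ\a$: one must apply axiom (17) for the right $H$-comodule structure to $m\lhd(x\o h)$ and again to $T(\cdot)$, correctly thread through the Sweedler indices of $h$ split into $h_1,h_2$ (and further subdivisions), and repeatedly use the defining Hopf heap identities $[c_1,c_2,a]=\v(c)a$, the group-like property $\D(x)=x\o x$, and the hypothesis $\v\circ B=\v$ to make telescoping cancellations of the auxiliary $x$'s. This is precisely the kind of multi-step Sweedler-notation computation carried out in the proof of Theorem~\mref{the:thestructuretheorem} and in Lemma~\mref{cmhpff}; no new idea is needed beyond combining those two, but the calculation is long and the interplay between $B$ on $H$ and $T$ on $M$ must be tracked through the Rota-Baxter axiom of Definition~\mref{def:rbcohpm} to confirm the right-hand side of that axiom for $\widehat T$ reduces, via $\D^{\,?}$-splittings and $\v\circ B=\v$, to $\widehat T((m\o h)_{(\hat0)})\o B((m\o h)_{(\hat1)})$.
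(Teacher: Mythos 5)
Your proposal is correct and follows essentially the same route as the paper: part (a) by recognizing the first tensor factor as $P\circ T$ and reusing the computation from Remark~\mref{rem:rthestructuretheorem}(c), and part (b) by invoking Lemma~\mref{cmhpff} with $F(m\o h)=T(m\lhd(x\o h))_{(0)}\lhd(T(m\lhd(x\o h))_{(1)}\o x)$ for the Rota--Baxter structure on $M^{coH}_x\o H$, then carrying out the Sweedler computation (via the axiom of Definition~\mref{def:rbcohpm} and $\v\circ B=\v$) to show $\a$ intertwines $T$ and $\widehat T$. The only cosmetic difference is that you phrase the intertwining as $\widehat T=\b\circ T\circ\a$ while the paper verifies $\a\circ\widehat T=T\circ\a$, which is equivalent since $\a$ and $\b$ are mutually inverse.
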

\begin{proof}
(a) For all $m\in M$, $h\in H$, we get
\begin{eqnarray*}
	&&\rho(T(m\lhd(x\o h))_{(0)}\lhd(T(m\lhd(x\o h))_{(1)}\o x))\\
	&=&T(m\lhd(x\o h))_{(0)(0)}\lhd(T(m\lhd(x\o h))_{(1)2}\o x)\o [T(m\lhd(x\o h))_{(0)(1)}, T(m\lhd(x\o h))_{(1)1}, x]\\
	&=&T(m\lhd(x\o h))_{(0)}\lhd(T(m\lhd(x\o h))_{(1)3}\o x)\o [T(m\lhd(x\o h))_{(1)1}, (T(m\lhd(x\o h))_{(1)2},x)]\\
	&=&T(m\lhd(x\o h))_{(0)}\lhd(T(m\lhd(x\o h))_{(1)2}\o x)\o \varepsilon(T(m\lhd(x\o h))_{(1)1})x\\
	&=&T(m\lhd(x\o h))_{(0)}\lhd(T(m\lhd(x\o h))_{(1)}\o x)\o x.
\end{eqnarray*}
 Hence $\mathrm{Im}\widehat{T}\subset M_{x}^{coH}\o H$.

(b) By (a), $\widehat{T}$ is well defined. According to Remark \mref{rem:rthestructuretheorem}, $M_{x}^{coH}\o H$ is a right $H$-Hopf heap module by $(m\o h)\widehat{\lhd} (g\o l)=m\o [h,g,l]$ and $\rho(m\o h)=(m\o h)_{(\hat{0})}\o (m\o h)_{(\hat{1})}=m\o h_{1}\o h_{2}$, for all $h,g,l\in H, m\in M_{x}^{coH}$.

Define two linear maps as follows:
$$\psi: M\o H\rightarrow M,~ ~ m\o h\mapsto m\lhd(x\o h)$$
$$\phi: M\rightarrow M, ~ ~ m\mapsto [x,m,x].$$

 Taking $F=\psi(I_{M}\o \phi)\rho T\psi$, by Lemma \mref{cmhpff}, we have
$$\widehat{T}(m\o h)=T(m\lhd(x\o h_{1}))_{(0)}\lhd(T(m\lhd(x\o h_{1}))_{(1)}\o x) \o B(h_{2}),$$
so, $( M_{x}^{coH}\o H, \widehat{T})$ is a Rota-Baxter Hopf heap module over $(H,B)$.

According to Remark \mref{rem:rthestructuretheorem}, we have a right $H$-Hopf heap module isomorphisms as follow:
$$\alpha: M_{x}^{coH}\o H\rightarrow M,~ ~ m\o h\mapsto m\lhd(x\o h),$$
with the inverse
$$\beta: M\rightarrow M_{x}^{coH}\o H,~ ~ m\mapsto P(m_{(0)})\o m_{(1)}.$$

Moreover, for all $m\in M_{x}^{coH}$, $h\in H$, we have
\begin{eqnarray*}
\rho(m\lhd(x\o h))&=&m_{(0)}\lhd(x\o h_{1})\o [m_{(1)},x,h_{2}]\\
&=&m\lhd(x\o h_{1})\o [x,x,h_{2}]\\
&=&m\lhd(x\o h_{1})\o h_{2}.
\end{eqnarray*}
and
\begin{eqnarray*}
	&&\widehat{T}(m\o h)\\
	&=&T(m\lhd(x\o h_{1}))_{(0)}\lhd(T(m\lhd(x\o h_{1}))_{(1)}\o x) \o B(h_{2})\\
	&=&T((m\lhd(x\o h))_{(0)})_{(0)}\lhd(T((m\lhd(x\o h))_{(0)})_{(1)}\o x) \o B((m\lhd(x\o h))_{(1)})\\
	&=&(\underbrace{T(m\lhd(x\o h))_{(0)}\lhd (B(T(m\lhd(x\o h))_{(1)3})\o B(T(m\lhd(x\o h))_{(1)1}))})_{(0)}\\
	&&\lhd(\underbrace{T(m\lhd(x\o h))_{(0)}\lhd (B(T(m\lhd(x\o h))_{(1)3})\o B(T(m\lhd(x\o h))_{(1)1}))_{(1)}\o x}) \o T(m\lhd(x\o h))_{(1)2}\\
	&=&(\underbrace{T(m\lhd(x\o h))_{(0)(0)}\lhd (B(T(m\lhd(x\o h))_{(1)3})_{2}\o B(T(m\lhd(x\o h))_{(1)1})_{1})})\\
	&&\lhd(\underbrace{[T(m\lhd(x\o h))_{(0)(1)}, B(T(m\lhd(x\o h))_{(1)3})_{1}, B(T(m\lhd(x\o h))_{(1)1})_{2}]\o x}) \o T(m\lhd(x\o h))_{(1)2}\\
	&=&(T(m\lhd(x\o h))_{(0)}\lhd (B(T(m\lhd(x\o h))_{(1)23})_{2}\o B(T(m\lhd(x\o h))_{(1)21})_{1}))\\
    &&\lhd([T(m\lhd(x\o h))_{(1)1}, B(T(m\lhd(x\o h))_{(1)23})_{1}, B(T(m\lhd(x\o h))_{(1)21})_{2}]\o x) \o T(m\lhd(x\o h))_{(1)22}\\
	&=&T(m\lhd(x\o h))_{(0)}\lhd (B(T(m\lhd(x\o h))_{(1)23})_{2}\o [B(T(m\lhd(x\o h))_{(1)21})_{1},\\
	&&[B(T(m\lhd(x\o h))_{(1)21})_{2}, B(T(m\lhd(x\o h))_{(1)23})_{1}, T(m\lhd(x\o h))_{(1)1}], x]) \o T(m\lhd(x\o h))_{(1)22}\\
  	&=&T(m\lhd(x\o h))_{(0)}\lhd (B(T(m\lhd(x\o h))_{(1)23})_{2}\o [[B(T(m\lhd(x\o h))_{(1)21})_{1},\\
    &&B(T(m\lhd(x\o h))_{(1)21})_{2}, B(T(m\lhd(x\o h))_{(1)23})_{1}], T(m\lhd(x\o h))_{(1)1}, x]) \o T(m\lhd(x\o h))_{(1)22}\\
    &=&T(m\lhd(x\o h))_{(0)}\lhd (B(T(m\lhd(x\o h))_{(1)22})_{2}\o [ B(T(m\lhd(x\o h))_{(1)22})_{1}, T(m\lhd(x\o h))_{(1)1}, x]) \\
    &&\o T(m\lhd(x\o h))_{(1)21}\\
    &=&(T(m\lhd(x\o h))_{(0)}\lhd (B(T(m\lhd(x\o h))_{(1)22})_{2}\o  B(T(m\lhd(x\o h))_{(1)22})_{1}))\lhd (T(m\lhd(x\o h))_{(1)1}\o x) \\
&&\o T(m\lhd(x\o h))_{(1)21}\\
    &=&T(m\lhd(x\o h))_{(0)}\lhd ( T(m\lhd(x\o h))_{(1)1}\o x)\o T(m\lhd(x\o h))_{(1)2}.
\end{eqnarray*}

Furthermore, we get
\begin{eqnarray*}
	\alpha(\widehat{T}(m\o h))
	&=&\alpha(T(m\lhd(x\o h))_{(0)}\lhd ( T(m\lhd(x\o h))_{(1)1}\o x)\o T(m\lhd(x\o h))_{(1)2})\\
	&=&(T(m\lhd(x\o h))_{(0)}\lhd ( T(m\lhd(x\o h))_{(1)1}\o x))\lhd (x\o T(m\lhd(x\o h))_{(1)2})\\
	&=&T(m\lhd(x\o h))_{(0)}\lhd (T(m\lhd(x\o h))_{(1)1}\o [ x,x, T(m\lhd(x\o h))_{(1)2}])\\
	&=&T(m\lhd(x\o h))_{(0)}\lhd (T(m\lhd(x\o h))_{(1)1}\o T(m\lhd(x\o h))_{(1)2})\\
	&=&T(m\lhd(x\o h))_{(0)}\varepsilon(T(m\lhd(x\o h))_{(1)})\\
	&=&T(m\lhd(x\o h))\\
	&=&T(\alpha(m \o h)),				
\end{eqnarray*}
that is, $\alpha\circ\widehat{T}=T\circ\alpha,$ and we can prove that
$$\widehat{T}\circ\beta=\beta\circ \alpha\circ\widehat{T}\circ\beta=\beta\circ T\circ\alpha\circ\beta=\beta\circ T.$$

Hence $(M, T)\cong(M_{x}^{coH}\o H, \widehat{T})$ as Rota-Baxter Hopf heap modules over $(H,B)$.
\end{proof}

\noindent {\bf Acknowledgements}: This work was supported by National Natural Science Foundation of China (12201188).

\end{document}